\crefname{ineq}{Inequality}{inequalities}
\newtheorem{theorem}{Theorem}[section]
\newtheorem{lemma}[theorem]{Lemma}
\newtheorem{proposition}[theorem]{Proposition}
\newtheorem{corollary}[theorem]{Corollary}
\newtheorem{thmdef}[theorem]{Theorem-Definition}
\theoremstyle{definition}
\newtheorem{example}[theorem]{Example}
\newtheorem{definition}[theorem]{Definition}
\newtheorem{remarklabeled}[theorem]{Remark}
\numberwithin{equation}{section}
\DeclareMathOperator{\GL}{GL}
\DeclareMathOperator{\SL}{SL}
\DeclareMathOperator{\id}{id}
\DeclareMathOperator{\Supp}{Supp}
\DeclareMathOperator{\Ess}{Ess}
\DeclareMathOperator{\reg}{reg}
\DeclareMathOperator{\Spec}{Spec}
\DeclareMathOperator{\im}{im}
\DeclareMathOperator{\Hom}{Hom}
\DeclareMathOperator{\Irr}{Irr}
\DeclareMathOperator{\Stab}{Stab}
\newcommand{\NamiWeyl}{W^{\mathrm{N}}}
\newcommand{\inverse}{^{-1}}
\newcommand{\gitquo}{/\!/}
\newcommand{\RR}{\mathbb{R}}
\newcommand{\M}{\mathcal{M}}
\newcommand{\F}{\mathcal{F}}
\newcommand{\g}{\mathfrak{g}}
\newcommand{\C}{\mathbb{C}}
\newcommand{\ZZ}{\mathbb{Z}}
\newcommand{\mf}{\mathfrak}
\newcommand{\sheafO}{\mathcal{O}}
\newcommand{\cartanh}{\mathfrak{h}}
\newcommand{\mc}{\mathcal}
\newcommand{\gl}{\mathfrak{gl}}
\renewcommand{\>}{\rangle}
\renewcommand{\L}{\mathcal{L}}
\title{Maffei's action and symplectic Springer action for quiver varieties}
\author{Yaochen Wu}
\begin{document}
	\begin{abstract}
		We examine the relationship between the actions of two Weyl groups on the cohomology of a smooth quiver variety: the Maffei's action of the Weyl group associated to the quiver, defined in \cite{maffei2002remark}, and the symplectic Springer action of the Namikawa-Weyl group of the affine quiver variety, defined in \cite{mcgerty2019springer}. 
		We show there is a natural map from the former group to the latter, which is an embedding in favorable situations, and this map intertwines their actions on the cohomology. This answers a question raised in \cite{mcgerty2019springer}. 
	\end{abstract}
	\maketitle

	\tableofcontents
	\section{Introduction}
	\subsection{Quiver varieties}\label{subsection:quiver varieties intro}
	We recall the construction of quiver varieties following \cite{nakajima1998quiver}. 
	Let $Q$ be a quiver without edge loops. Let $Q_0,Q_1$ be its set of vertices and arrows respectively. 
	For $i\neq j\in Q_0$, denote by $n_{ij}$ the number of arrows between $i$ and $j$, regardless of the orientation. 
	Set $c_{ij} = -n_{ij}$ for $i\neq j$, and $c_{ii} = 2$.  
	Let $\alpha_i$ denote the simple real root attached to the vertex $i$. Denote $\cartanh_Q$ to be the complex vector space spanned by the basis $ \{\alpha_i|i\in Q_0\}$. Define the Tits form on $\cartanh_Q$ by 
	\begin{equation}
		(\alpha_i,\alpha_j) = c_{ij}. 
	\end{equation}
	The matrix $(c_{ij})$ is a symmetric Cartan matrix. The \textit{roots} of $Q$, real or imaginary, are defined to be the roots of the corresponding Kac-Moody algebra. If $\alpha$ is a real root then $(\alpha,\alpha)=2$. If $\alpha$ is an imaginary root, we say $\alpha$ is \textit{isotropic} (respectively, \textit{non-isotropic}) if $(\alpha,\alpha) = 0$ (respectively, if $(\alpha,\alpha) < 0$).  

	Let $v = (v_i)_{i\in Q_0} \in \ZZ_{\ge 0}^{Q_0}$ be a dimension vector, and $w = (w_i)_{i\in Q_0}$ be a framing vector. 
	A \textit{framed representation} of $Q$ with dimension $v$ and framing $w$ consists of the following data: 
	\begin{enumerate}
		\item To each vertex $i\in Q_0$ we assign a vector space $V_i$ that has dimension $v_i$, and a morphism $p_i\in Hom (W_i,V_i)$, where $W_i$ is a fixed $w_i$-dimensional vector space. 
		\item To each arrow $a\in Q_1$ we assign an element $x_a\in \Hom (V_{t(a)}, V_{h(a)})$. 
	\end{enumerate}
	
	Define 
	\begin{equation}{\label{definition of R}}
		R = R(Q,v,w) := \bigoplus_{\substack{a\in Q_1}} \Hom (V_{t(a)}, V_{h(a)}) 
		\oplus \bigoplus_{i\in Q_0} \Hom (W_i,V_i).
	\end{equation}
	This is the space of representations of $Q$ with dimension vector $v$ and framing $w$. 
	We identify $T^*R$ with the space of representations of $\overline{Q}$, the \textit{double quiver} of $Q$, i.e. for each $a \in Q_1$ we add an arrow $\overline{a}$ so that $h(\overline{a}) = t(a), t(\overline{a}) = h(a)$, and for each $i\in Q_0$ we add a coframing in $\Hom(V_i,W_i)$. 
	For an element $r\in T^*R$, we write 
	\begin{equation}\label{eq:expression of a representation}
		r = (x_a,y_a,p_i,q_i)_{a\in Q_1,i\in Q_0},
	\end{equation}
	where $x_a\in \Hom (V_{t(a)}, V_{h(a)}) , y_a\in \Hom (V_{h(a)}, V_{t(a)}), p_i\in \Hom (W_i,V_i)$ and $ q_i\in \Hom (V_i,W_i)$. 
	
	Write $G = GL(v):= \prod_{i\in Q_0} \GL(V_i)$. This group acts naturally on $R$, and therefore we have an induced Hamiltonian $G$-action on $T^*R$. Let $\mu: T^*R\to \g^*$ be the standard moment map. 
	
	We can identify $(\g^*)^G \cong \C^{Q_0}$, and write $\mathfrak{p} := \C^{Q_0}$. 	
	Let $\lambda \in \mathfrak{p}$. Let $\theta$ be a character of $G$; we can view $\theta$ as an element of $\ZZ^{Q_0}$. 
	We view $\theta$ and $\lambda$ as elements of $\cartanh_Q^*$. 
	By definition, the Nakajima quiver variety $\M_\lambda^\theta(Q,v,w)$ is the GIT quotient
	\begin{equation}
		\M_\lambda^\theta(Q,v,w) := \mu\inverse(\lambda)^{\theta -ss} \gitquo G.
	\end{equation} 
	When the quiver $Q$ is clear from the context, we will write $\M_\lambda^\theta(v,w)$ for the quiver variety. Note that if $\theta = 0$, then $\mu\inverse(\lambda)^{\theta-ss} = \mu\inverse(\lambda)$ and $\M_\lambda^0(v,w)$ is affine. 
	We also define
	\begin{equation}
		\M_\mathfrak{p}^\theta(v,w) := \mu\inverse(\mathfrak{p})^{\theta -ss}\gitquo G.
	\end{equation}
	There is a natural map $\M_\mathfrak{p}^\theta(v,w) \to \mathfrak{p}$ induced from $\mu$, and $\M_\lambda(v,w)$ is the fiber of this map over $\lambda\in\mathfrak{p}$. 
	
	We introduce the following notations. 
	\begin{enumerate}
		\item We denote by $\rho$ the natural projective morphism $\M_\lambda^\theta(v,w) \to \M_\lambda^0(v,w)$.
		\item If $r\in \mu\inverse(\lambda)^{\theta-ss}$, then we denote its image in $\M_\lambda^\theta(v,w)$ by $[r]^\theta$. 
		\item If $r\in \mu\inverse(\lambda)$, then we denote its semisimplification as a $\overline{Q}$-representation by $r^{ss}$. 
	\end{enumerate}
	
	We say the pair $(\theta,\lambda)$ is \textit{generic} if for any $v' \in \ZZ_{\ge 0}^{Q_0}$, $v' \le v$, we have $(v'\cdot \theta, v'\cdot \lambda) \neq (0,0)$. We say $\theta$ (resp. $\lambda$) is generic if $(\theta,0)$ (resp. $(0,\lambda)$) is generic. 
	
	We collect a few known results about quiver varieties. 
	\begin{enumerate}
		\item If $r \in \mu\inverse(\lambda)^{\theta-ss}$ then $\rho([r]^\theta) = [r^{ss}]^0$. See \cite[Theorem 2.10, Section 10.1]{kirillov2016quiver}. 
		\item If $(\theta,\lambda)$ is generic, then  the $G$-action on $\mu\inverse(\lambda)^{\theta-ss}$ is free and $\M_\lambda^\theta(v,w)$ is smooth.  See \cite[Theorem 2.8]{nakajima1994instantons} and its proof, and \cite[Remark 3.16]{nakajima1998quiver}. 
		In particular, if $\lambda$ is generic, then $\mu\inverse(\lambda) = \mu\inverse(\lambda)^{\theta -ss}$ for any $\theta$, and the projective map $\rho:\M_\lambda^\theta(v,w) \to \M_0^\theta(v,w)$ is an isomorphism. 
		\item If $\theta$ is generic, then the affinization $\Spec \C[\M_0^\theta(v,w)]$ is independent of the choice of $\theta$. We denote it by $\M_0(v,w)$. See \cite[Corollary 2.4]{bezrukavnikov2021etingof}. 
		\item If the moment map $\mu$ is flat, then $\M_0(v,w)\cong \M_0^0(v,w)$. In this case, $\M_0^\theta(v,w)\to \M_0^0(v,w)$ is a conical symplectic resolution of singularities. See \cite[Proposition 2.3, Proposition 2.5]{bezrukavnikov2021etingof}. 
	\end{enumerate}	
	
	It will be useful to identify  $\M_\lambda^\theta(v,w)$ with a quiver variety without framing by the method in \cite[Section 1, Remarks]{crawley2001geometry}. Namely, we define a new quiver ${Q^\infty}$ as follows. We set ${Q^\infty_0} = Q_0 \coprod \{\infty\}$. The arrows between the vertices in $Q_0$ are the same as those in $Q_1$. In addition, for each $i\in Q_0$, there are $w_i$ arrows from the vertex $i$ to the vertex $\infty$. 
	Denote the new simple root associated to the vertex $\infty$ by $\alpha_\infty$. Define the extended dimension vector $\tilde{v}\in \ZZ_{\ge 0}^{{Q^\infty_0}}$ by $\tilde{v}_\infty = 1$ and $\tilde{v}_i = v_i$ for $i\in Q_0$. 
	It is clear that $R(Q,v,w) = R({Q^\infty},\tilde{v})$, so  $\M^\theta_\lambda(Q,v,w) = \M_{\tilde{\lambda}}^{\tilde{\theta}}({Q^\infty},\tilde{v})$, 
	where $\tilde{\lambda}_\infty = -v\cdot \lambda$, $\tilde{\lambda}_i = \lambda_i$ for $i\in Q_0$; $\tilde{\theta}_\infty = -\theta \cdot v, \tilde{\theta}_i = \theta_i$ for $i \in Q_0$. To simplify notations, we will write $\M_\lambda^\theta(\tilde v)$ for $\M_{\tilde{\lambda}}^{\tilde{\theta}}({Q^\infty},\tilde{v})$ when there is no danger of confusion. 
	
	\subsection{Quiver Weyl group}
	We define the following Coxeter group associated to $Q$. 
	\begin{definition}
		The \textit{quiver Weyl group} $W_Q$ of $Q$ is generated by $\{s_i|i\in Q_0\}$, subject to the relations: 
		\begin{enumerate}
			\item $s_i^2 =1$;
			\item $s_is_j=s_js_i$ if $n_{ij} = 0$; 
			\item $(s_is_j)^3 = 1$ if $n_{ij} = 1$. 
		\end{enumerate}
	\end{definition}
	We remark that we impose no relations between $s_i$ and $s_j$ when $n_{ij} \ge 2$. 
	The group $W_Q$ is the same as the Weyl group associated to the Kac-Moody algebra defined by the symmetric Cartan matrix $(c_{ij})$. 
	
	Similarly we can define $W_{Q^\infty}$. We view $W_Q$ as a subgroup of $W_{Q^\infty}$. 
	
	There is a natural representation of $W_{Q}$ in $\cartanh_Q$ given by 
	\begin{equation}\label{eq: def of W action on Q}
		s_i(\alpha_j) = \alpha_j -c_{ij}\alpha_i .
	\end{equation}
	The dual representation is described as follows. If $\chi = \sum \chi_i\alpha_i^* \in \cartanh_Q^*$ where $\alpha_i^*$ is the dual basis to $\alpha_i$, then 
	\begin{equation}\label{eq: formula of si act on chi}
		s_i^*(\chi) = \sum_{j\in Q_0} (\chi_j-c_{ij}\chi_i)\alpha_j^*.
	\end{equation}
	Similarly, we can define the natural representations of $W_{Q^\infty}$ in $\cartanh_{Q^\infty}$ and $\cartanh_{Q^\infty}^*$. We have a representation of $W_Q$ on $\cartanh_{Q^\infty}$ by the same formula as \eqref{eq: def of W action on Q}, and allow $j=\infty$.
	\begin{definition}
		We denote the stabilizer of $\tilde{v}$ in $W_Q$ by $W(v,w)$. 
	\end{definition}
	
	\subsection{Poisson deformations and Namikawa-Weyl groups}
	We recall the definition of the Namikawa-Weyl group associated to a conical symplectic singularity.  
	\begin{definition}[{\cite{beauville2000symplectic}}]\label{conical symplectic singularities}
		Let $X$ be a normal affine Poisson variety.
		\begin{enumerate}
			\item We say $X$ \textit{has symplectic singularities} if the Poisson structure on its smooth locus $X^{\reg}$ is nondegenerate, thus inducing a symplectic form $\omega$, and there is a projective resolution of singularities $\rho: Y \to X$ such that $\rho^* \omega$ extends to a regular 2-form on $Y$. Note that $\rho^*\omega$ need not be nondegenerate. 
			\item We say $X$ is \textit{conical} if there is a $\C^*$-action on $X$ that contracts $X$ to a point, and $\omega$ has positive weight under this action.
			\item We say $\rho$ is a \textit{symplectic resolution of singularities} if $\rho^* \omega$ extends to a symplectic form on $Y$; we say $\rho$ is \textit{conical} if $Y$ is equipped with a $\C^*$-action so that $\rho$ is $\C^*$-equivariant. 
		\end{enumerate}
	\end{definition}
	\begin{definition}\label{definition graded poisson deformation}
		Let $X$ be a normal Poisson variety equipped with a $\C^*$-action, and the Poisson bracket has degree $-d$ for some $d\in \ZZ_{>0}$. A \textit{graded Poisson deformation} of $X$ is the data $(\mc{X}, B, j)$, where: 
		\begin{enumerate}
			\item $B = \bigoplus_{i\ge 0} B_i$ is a finitely generated graded $\C$-algebra, such that the degree 0 piece $B_0 = \C$. 
			\item $\mc{X}$ is a Poisson $B$-variety equipped with a $\C^*$-action, the Poisson structure on $\sheafO_\mc{X}$ is $B$-linear, and the structure morphism $m_X: \mc{X}\to \Spec(B)$ is $\C^*$-equivariant and flat. 
			\item $j: X\xrightarrow{\sim} m_X\inverse(0)$ is a $\C^*$-equivariant isomorphism of Poisson varieties, where $0\in \Spec(B)$ corresponds to the maximal ideal $\bigoplus_{i>0} B_i$. 
		\end{enumerate}
	\end{definition}
	Let $X$ be as in \Cref{definition graded poisson deformation} and $(\mc{X},B, j), (\mc{X}',B', j')$ be two graded Poisson deformations. 
	A \textit{morphism} of graded Poisson deformations from $(\mc{X},B , j)$ to $ (\mc{X}',B', j')$ consists of $\C^*$-equivariant morphisms $\Phi:\mc{X}\to \mc{X}'$ and  $f: \Spec(B)\to \Spec(B')$, such that the following diagram is Cartesian
	\[
	\begin{tikzcd}
		\mc{X}\arrow[r,"\Phi"] \arrow[d,"m_X"] & \mc{X}' \arrow[d,"m_X'"]\\
		\Spec(B)\arrow[r,"f"] &  \Spec(B')
	\end{tikzcd}
	\]
	and moreover, $j' = \Phi|_{\pi\inverse(0)} \circ j$. 
	
	We say a graded Poisson deformation $(\mc{X} ,B, j)$ is \textit{universal} if for any graded Poisson deformation $(\mc{X}',B', j')$, there is a unique morphism of graded Poisson deformations from $(\mc{X}',B', j')$ to $(\mc{X} ,B, j)$.

	Let $X$ be a conical symplectic variety, $\rho: Y\to X$ be a conical symplectic resolution of singularities. 
	\begin{theorem}{\label{Namikawa diagram of universal deformation }} {\cite[Theorem 5.5]{namikawa2011poisson}}
		There is a commutative diagram 
		\begin{equation}{\label{diagram of univ poisson defor of symplectic resol}}
			\begin{tikzcd}
				\mc{Y} \arrow[r] \arrow[d,"m_Y"] & \mc{X} \arrow[d,"m_X"]\\
				B_Y  \arrow[r,"q"]& B_X
			\end{tikzcd}
		\end{equation}
		where $B_Y = H^2(Y,\C) $, and  $m_X$, $m_Y$ are universal graded Poisson deformations of $X$ and $Y$ respectively, with $m_X\inverse(0) = X, m_Y\inverse(0) = Y$.

	\end{theorem}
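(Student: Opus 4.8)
The plan is to establish the theorem in three stages: construct the universal graded Poisson deformation $m_Y$ of the resolution $Y$ with base $B_Y = H^2(Y,\C)$; push it forward along $\rho$ to obtain a graded Poisson deformation of $X$; and verify that the resulting family is universal among graded Poisson deformations of $X$, which pins down $B_X$ and the map $q$.

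For the first stage, I would use the symplectic form $\omega$ on $Y$ to identify the tangent sheaf $T_Y$ with $\Omega^1_Y$, so that first-order deformations of $Y$ are governed by $H^1(Y,T_Y)\cong H^1(Y,\Omega^1_Y)$ and obstructions lie in $H^2(Y,\Omega^1_Y)$. Since $\rho$ is a resolution of the affine variety $X$ one has $R\rho_*\sheafO_Y=\sheafO_X$, hence $H^{>0}(Y,\sheafO_Y)=0$; feeding this into a Hodge-theoretic, twistor-type deformation argument in the style of Kaledin--Verbitsky yields both the unobstructedness of the deformation functor of $Y$ and a canonical identification of the space of first-order deformations with $H^2(Y,\C)$ (using the cycle class map and the $\C^*$-action to control the relevant graded pieces). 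One first obtains a formally universal family over the completion of $B_Y$; the contracting $\C^*$-action on $Y$, under which $\omega$ has positive weight, lets one algebraize it to an honest graded family $\mc{Y}\to B_Y$, and the relative closed $2$-form restricting to $\omega$ on the central fiber makes $m_Y$ a graded Poisson deformation with $m_Y\inverse(0)=Y$.

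For the second stage, set $\mc{X}:=\Spec_{B_Y}\rho_*\sheafO_{\mc{Y}}$. The work is to check that $\rho_*\sheafO_{\mc{Y}}$ is flat over $B_Y$ (again via $R\rho_*\sheafO_Y=\sheafO_X$ together with base change and semicontinuity), that the relative Poisson bracket descends to $\mc{X}$, and that the fiber of $\mc{X}$ over a point of $B_Y$ is the affinization of the corresponding fiber of $\mc{Y}$ and still has symplectic singularities. This produces the commutative square \eqref{diagram of univ poisson defor of symplectic resol} with $\mc{X}\to B_Y$ a graded Poisson deformation of $X$ and $m_X\inverse(0)=X$.

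The third stage, which I expect to be the main obstacle, is universality on the singular side. One must show that the graded Poisson deformation functor of $X$ is pro-represented by a smooth formal scheme, equivalently that it is unobstructed; this rests on Namikawa's analysis of the Poisson cohomology $HP^2(X)$ via the stratification of $X$ into symplectic leaves, local Darboux-type models transverse to each leaf, the fact that the singular locus of $X$ has codimension $\geq 4$, and a Leray / local-to-global comparison of $HP^2(X)$ with $H^2(X^{\reg},\C)$. Granting this and the corresponding algebraization, the base $\Spec B_X$ of the universal deformation $m_X$ exists with $B_X$ graded, and the family $\mc{X}\to B_Y$ built in the second stage is classified by a unique graded morphism inducing $q:B_Y\to B_X$; the Cartesian property in the morphism of graded Poisson deformations then puts the square \eqref{diagram of univ poisson defor of symplectic resol} in the asserted form. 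The two most delicate points are the algebraization from formal to graded-algebraic families (an Artin--Elkik approximation step that succeeds precisely because of the conical structure) and controlling $q$ well enough to see it is finite and surjective, exhibiting $B_X$ as the quotient of $B_Y=H^2(Y,\C)$ by the Namikawa--Weyl group — though only the existence of $q$ is needed for the statement as given.
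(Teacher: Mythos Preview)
The paper does not prove this theorem at all: it is stated with the citation \cite[Theorem 5.5]{namikawa2011poisson} and used as background input. There is therefore no ``paper's own proof'' to compare your proposal against; the statement is simply imported from Namikawa's work.

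Regarding your sketch as an outline of Namikawa's argument, the overall architecture (build the universal deformation of $Y$ over $H^2(Y,\C)$ via a twistor/period-map argument, take the relative affinization to obtain a deformation of $X$, then establish universality on the singular side) is correct in spirit. However, there is a genuine error in the third stage: you invoke ``the fact that the singular locus of $X$ has codimension $\geq 4$'', but this is false in general and indeed false in essentially every example the present paper cares about. Conical symplectic singularities typically have codimension~$2$ symplectic leaves---these are exactly the leaves $\mc{L}_1,\dots,\mc{L}_n$ appearing in \Cref{Ivan's decomposition of cartan space} and \Cref{Namikawa Weyl group}, with transversal Kleinian (ADE) slices, and they are the source of the Namikawa--Weyl group. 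If the singular locus really had codimension $\geq 4$, the group $\NamiWeyl$ would be trivial and $q$ would be an isomorphism. The correct input to the computation of $HP^2(X)$ and unobstructedness is that symplectic singularities are rational and that in codimension~$2$ they are transversally Du~Val; Namikawa's analysis then decomposes the deformation space into the $H^2(X^{\reg},\C)$ piece and the contributions $\cartanh_i$ from each codimension-$2$ leaf, exactly as in \eqref{eq: decomposition of namikawacartan space}.
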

	We write $\cartanh_X: = H^2(Y,\C)$. By \cite[Proposition 2.18]{braden2016conical}, $\cartanh_X$ is independent of the choice of the  symplectic resolution $Y$. We call it the \textit{Namikawa-Cartan space} of $X$. 
	
	By {\cite[Theorem 2.3]{kaledin2006symplectic}}, $X$ has finitely many symplectic leaves. 
	Let $\mc{L}_1,...,\mc{L}_n$ be the codimension 2 symplectic leaves of $X$. 
	The formal slice $S_i$ to $\mc{L}_i$ is a type $ADE$ Kleinian singularity; let $\widehat{W}_i$ and $\hat{\cartanh}_i^*$ be the corresponding Weyl group and root space, respectively. The fundamental group $\pi_1(\mc{L}_i)$ acts on $\widehat{W}_i$ and $\hat{\cartanh}_i^*$ by Dynkin diagram automorphisms. 
	Define $W_i := (\widehat{W}_i)^{\pi_1(\mc{L}_i)}$ and $\cartanh_i := (\hat{\cartanh}_i^*)^{\pi_1(\mc{L}_i)}$. They are the Weyl groups and Cartan spaces corresponding to the folded Dynkin diagram; the foldings are given by the action of $\pi_1(\mc{L}_i)$. 
	
	The Namikawa-Cartan space $\cartanh_X$ has the following decomposition. 
	\begin{theorem}[{\cite[Lemma 2.8]{losev2022deformations}}]{\label{Ivan's decomposition of cartan space}} There is a vector space isomorphism
		\begin{equation}\label{eq: decomposition of namikawacartan space}
			H^2(Y,\C) = H^2(X^{\reg},\C) \oplus \bigoplus_{i=1}^n \mf{h}_i
		\end{equation} 
		where $\L_1,\cdots,\L_n$ are the codimension 2 leaves of $X$. 
	\end{theorem}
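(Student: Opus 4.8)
The plan is to compute $H^2(Y,\C)$ through the Leray spectral sequence of $\rho$, after first discarding everything of codimension $\ge 2$ in $X$ except the codimension-$2$ leaves themselves. Set $U:=X^{\reg}\cup\bigsqcup_{i=1}^{n}\L_i$; since the closure of a leaf is a union of lower-dimensional leaves, $U$ is open in $X$ and $X\setminus U$ is a union of leaves of codimension $\ge 4$. Put $Y_U:=\rho\inverse(U)$. I would first show that the restriction maps $H^2(U,\C)\to H^2(X^{\reg},\C)$ and $H^2(Y,\C)\to H^2(Y_U,\C)$ are isomorphisms. For the first: away from $X^{\reg}$ the variety $U$ is \'etale-locally a product of a smooth factor with a Kleinian surface singularity $\C^2/\Gamma$, and such quotients are $\QQ$-homology manifolds, so $\mathcal H^{j}_{\L_i}(\C_U)=0$ for $j<4$ and hence $H^{j}_{\bigsqcup_i\L_i}(U,\C)=0$ for $j\le 3$; the localization sequence then gives the claim. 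For the second: $Y$ is smooth and, by semismallness of the symplectic resolution $\rho$, the preimage of a codimension-$2k$ leaf has dimension $\le(\dim X-2k)+k$, hence codimension $\ge k$ in $Y$; so $Y\setminus Y_U$ has codimension $\ge 2$ and $H^{j}_{Y\setminus Y_U}(Y,\C)=0$ for $j\le 3$. Thus it is enough to analyze $\rho|_{Y_U}\colon Y_U\to U$.

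Next I would identify the higher direct images. Again by semismallness the fibres of $\rho$ over $\L_i$ have complex dimension $\le 1$, and over $X^{\reg}$ they are points; the fibres over $\L_i$ are the exceptional fibres of minimal resolutions of Kleinian singularities, i.e. trees of $\PP^1$'s. Hence $R^0\rho_*\C=\C$, $R^1\rho_*\C=0$ (such trees are simply connected), $R^q\rho_*\C=0$ for $q\ge 3$, and $R^2\rho_*\C$ is a constructible sheaf supported on $\bigsqcup_i\L_i$ whose restriction to the single stratum $\L_i$ is a local system $\mathcal V_i$ (local triviality of $\rho$ over $\L_i$ by Thom's isotopy lemma). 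To pin down $\mathcal V_i$ I would invoke Kaledin's analytic slice theorem for symplectic singularities together with the uniqueness of the minimal resolution of a Kleinian surface singularity: analytically near $\L_i$, $\rho$ is the product of the minimal resolution of the transverse slice $S_i$ with a polydisc, and monodromy along $\pi_1(\L_i)$ permutes the exceptional $\PP^1$'s precisely by the Dynkin-diagram automorphisms used to define $\widehat W_i$ and $\hat\cartanh_i^*$. Via the McKay correspondence (the exceptional curve classes are the simple roots, with intersection form the negative Cartan matrix) the stalk of $\mathcal V_i$ is $\hat\cartanh_i^*$, so $H^0(\L_i,\mathcal V_i)=(\hat\cartanh_i^*)^{\pi_1(\L_i)}=\cartanh_i=\mf h_i$.

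Then I would run the Leray spectral sequence $E_2^{p,q}=H^p(U,R^q\rho_*\C)\Rightarrow H^{p+q}(Y_U,\C)$. In total degree $2$ only $E_2^{2,0}=H^2(U,\C)$ and $E_2^{0,2}=\bigoplus_i H^0(\L_i,\mathcal V_i)=\bigoplus_i\mf h_i$ can contribute (the row $q=1$ vanishes). No differential enters or leaves $E^{2,0}$, so the edge map $\rho^*\colon H^2(U,\C)\to H^2(Y_U,\C)$ is injective; the only differential affecting $E^{0,2}$ is $d_3\colon E_3^{0,2}\to E_3^{3,0}=H^3(U,\C)$, and this vanishes by a weight argument — the Leray filtration underlies a filtration by mixed Hodge structures, $E_2^{0,2}$ is pure of weight $2$ (it is assembled from $H^2$ of $\PP^1$'s), whereas $H^3(U,\C)$ has weights $\ge 3$. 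Hence the spectral sequence degenerates in degree $2$ and produces a short exact sequence $0\to H^2(U,\C)\to H^2(Y_U,\C)\to\bigoplus_i\mf h_i\to 0$, which, rewritten through the two isomorphisms of the first paragraph, becomes $0\to H^2(X^{\reg},\C)\to H^2(Y,\C)\to\bigoplus_i\mf h_i\to 0$; any vector-space splitting yields the stated decomposition.

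I expect the main obstacle to be the second step: extracting from the slice theorem a clean \emph{global} description of $R^2\rho_*\C$ together with its $\pi_1(\L_i)$-monodromy, and checking that the abstract Dynkin-automorphism action used to define $\widehat W_i$ and the folding $W_i=\widehat W_i^{\pi_1(\L_i)}$ genuinely coincides with the monodromy of the family of exceptional fibres over $\L_i$. Once this local model is available, the two codimension reductions and the weight-theoretic degeneration of the Leray spectral sequence are routine; alternatively, one may shortcut part of the argument by quoting Namikawa's computations of $H^2$ of symplectic resolutions directly.
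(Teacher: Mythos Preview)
The paper does not prove this theorem; it is quoted from \cite[Lemma 2.8]{losev2022deformations}, and the only supplementary material the paper supplies is \Cref{remark: projection from H2 to cartanh i}, which records the projection $H^2(Y,\C)\twoheadrightarrow\cartanh_i$ via Namikawa's local product decomposition \cite[Section 4.1]{namikawa2011poisson}. There is therefore no in-paper argument to compare your proposal against.

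That said, your proposal is a correct and essentially standard route to the result, in the spirit of Namikawa's and Losev's original arguments. The two codimension reductions are fine: $U$ is a rational homology manifold (\'etale-locally a product of a Kleinian quotient $\C^2/\Gamma$ with a smooth factor), so local cohomology along $\bigsqcup_i\L_i$ vanishes in degrees $<4$; and semismallness of $\rho$ (\cite[Lemma 2.11]{kaledin2006symplectic}) gives $\codim_Y(Y\setminus Y_U)\ge 2$. The identification of $R^2\rho_*\C|_{\L_i}$ with the local system of root spaces, and of its $\pi_1(\L_i)$-monodromy with the Dynkin-automorphism action, is precisely the content of \cite[Section 4.1, Proposition 4.2]{namikawa2011poisson}, which the paper already invokes in \Cref{remark: projection from H2 to cartanh i}; you should simply cite this rather than re-deriving it from Thom isotopy. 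The one step that deserves an extra sentence is your weight bound on $H^3(U,\C)$: since $U$ is singular, the inequality ``weights $\ge 3$'' is not automatic from Deligne's theory. It does hold because $U$ is a $\QQ$-homology manifold, so $\QQ_U[\dim_\C U]$ coincides with its own intersection complex and the decomposition theorem makes $H^*(U,\QQ)$ a direct summand, as a mixed Hodge structure, of $H^*(\tilde U,\QQ)$ for any resolution $\tilde U\to U$; the weight bound then transfers from the smooth $\tilde U$. With that remark your Leray/weight argument degenerates the spectral sequence in total degree $2$ and yields the short exact sequence, hence the vector-space decomposition.
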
 

	\begin{remarklabeled}\label{remark: projection from H2 to cartanh i}
		We describe the projection $H^2(Y,\C) \twoheadrightarrow \cartanh_i$, corresponding to the codimension 2 leaf $\L_i$. Let $x\in \L_i$. Then, by \cite[Section 4.1]{namikawa2011poisson}, there is an analytic neighbourhood $U$ of $x$ in $X$ such that
		\begin{enumerate}
			\item there is a Poisson isomorphism $U \cong S_i \times \Delta^{\dim X -2}$, where $ \Delta^{\dim X -2}$ is the complex polydisc of dimension $\dim X -2$ and $S_i$ is an analytic neighbourhood of the conical point of the corresponding Kleinian singularity;
			\item $\rho\inverse(U) \cong \tilde{S}_i \times \Delta^{\dim X -2} $, where $\tilde{S}_i$ is the preimage of $S_i$ in the minimal resolution of the Kleinian singularity. 
		\end{enumerate} 
		Take $\alpha\in H^2(Y,\C)$, restrict it to $\rho\inverse(U)$, and we get an element $\alpha_i \in H^2(\tilde{S_i},\C)$. The latter is isomorphic to the root space $\hat{\cartanh}_i^*$. By \cite[Proposition 4.2]{namikawa2011poisson}, $\alpha_i$ is invariant under the $\pi_1(\L_i)$ action, i.e. $\alpha_i \in \cartanh_i$. 
		By definition, the projection $H^2(Y,\C) \twoheadrightarrow \cartanh_i$ maps $\alpha$ to $\alpha_i$.
	\end{remarklabeled}
	\begin{definition}
		The direct product
		\[\NamiWeyl = \prod_{i = 1}^n W_i,\]
		where $\L_1,\cdots,\L_n$ are the codimension 2 symplectic leaves of $X$, is called the \textit{Namikawa-Weyl group} of $X$. 
	\end{definition}
	\begin{remarklabeled}\label{remark natural action NW on namikawacartan}
		There is a natural $\NamiWeyl$-action on $H^2(Y,\C)$, namely each $W_i$ acts as the Weyl group on the $\cartanh_i$ component in \eqref{eq: decomposition of namikawacartan space}, and trivially on all other components. 
	\end{remarklabeled}
	The Namikawa-Weyl group is important for the following reason. 
	\begin{theorem}[{\cite[Theorem 1.1]{namikawa2010poisson}}]{\label{Namikawa Weyl group}}
		The map $q$ in \eqref{diagram of univ poisson defor of symplectic resol} is the quotient map of the natural $\NamiWeyl$-action on $H^2(Y,\C)$. 
	\end{theorem}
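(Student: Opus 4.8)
The plan is to make the map $q$ of \Cref{Namikawa diagram of universal deformation } completely explicit and then identify the two bases $B_Y$ and $B_X$ with the pieces appearing in \Cref{Ivan's decomposition of cartan space}. Recall that $B_Y=H^2(Y,\C)$ and that both $m_Y$ and $m_X$ are \emph{universal} graded Poisson deformations (of $Y$ and of $X$ respectively). First I would give a geometric description of $q$: let $\pi:\mc{Y}\to B_Y$ be the structure morphism and form the relative affinization $\overline{\mc{Y}}:=\Spec_{B_Y}\bigl(\pi_*\sheafO_{\mc{Y}}\bigr)$. Since $\rho:Y\to X$ is a (crepant) resolution of the normal affine variety $X$, we have $\rho_*\sheafO_Y=\sheafO_X$ and $R^{>0}\rho_*\sheafO_Y=0$; a cohomology-and-base-change argument, together with the contracting $\C^*$-action which keeps Euler characteristics constant in the family, shows that $\overline{\mc{Y}}$ is a flat graded Poisson deformation of $X$ over $B_Y$ whose fiber over $b$ is the affinization of $\mc{Y}_b$. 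Universality of $m_X$ then produces the unique $\C^*$-equivariant $q:B_Y\to B_X$ with $\overline{\mc{Y}}\cong\mc{X}\times_{B_X}B_Y$, which is the $q$ of \eqref{diagram of univ poisson defor of symplectic resol}. It therefore remains to show that $q$ realizes $B_X$ as $B_Y/\NamiWeyl$.

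Next I would reduce this to a local computation around each codimension $2$ leaf $\L_i$. Poisson deformations of $X$ are controlled by a sheaf-level $T^1$ supported on $X\setminus X^{\reg}$, so the deformation behaviour of $X$ transverse to $X^{\reg}$ is read off from the analytic local models of \Cref{remark: projection from H2 to cartanh i}: near a point of $\L_i$ one has $X\cong S_i\times\Delta^{\dim X-2}$ with $S_i$ an $ADE$ Kleinian singularity, and $\rho\inverse$ of this chart $\cong\tilde{S}_i\times\Delta^{\dim X-2}$. By the Brieskorn--Grothendieck--Slodowy theory, the semiuniversal Poisson deformation of $S_i$ has base $\hat{\cartanh}_i^*/\widehat{W}_i$, and its pullback along the quotient $\hat{\cartanh}_i^*\to\hat{\cartanh}_i^*/\widehat{W}_i$ carries a simultaneous minimal resolution restricting to $\tilde{S}_i$ over the origin; restricting to $\pi_1(\L_i)$-equivariant deformations — the foldings defining $W_i$ and $\cartanh_i$ — replaces the datum $(\hat{\cartanh}_i^*,\widehat{W}_i)$ by $(\cartanh_i,W_i)$. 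Hence, transverse to $\L_i$, the base change from deformations of $\rho\inverse(\mathrm{chart})$ to deformations of the chart is precisely the quotient map $\cartanh_i\to\cartanh_i/W_i$; conicity guarantees that the simultaneous resolution occurring here is induced by the fixed global resolution $Y$, so this local picture is compatible with $q$.

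Finally I would globalize. The key technical input, due to Namikawa, is that graded Poisson deformations of $X$ and of $Y$ are unobstructed (a Bogomolov--Tian--Todorov-type argument for the Poisson bracket), so $B_X$ and $B_Y$ are vector spaces; combined with \Cref{Ivan's decomposition of cartan space} this gives $B_Y=H^2(Y,\C)=H^2(X^{\reg},\C)\oplus\bigoplus_{i=1}^n\cartanh_i$. Restriction to $X^{\reg}$, on which $\rho$ is an isomorphism and Poisson deformations are classified by $H^2$ and unchanged under affinization, identifies the corresponding summand of $B_X$ with $H^2(X^{\reg},\C)$; the local analysis above identifies the remaining part of $B_X$ with $\bigoplus_{i=1}^n\cartanh_i/W_i$ and shows that under these identifications $q$ becomes $\id\times\prod_{i=1}^n\bigl(\cartanh_i\to\cartanh_i/W_i\bigr)$. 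Since $\NamiWeyl=\prod_i W_i$ acts on $B_Y=H^2(Y,\C)$ exactly as $W_i$ on the summand $\cartanh_i$ and trivially elsewhere (\Cref{remark natural action NW on namikawacartan}), this exhibits $q$ as the quotient map by the $\NamiWeyl$-action, and incidentally shows $q$ is finite and surjective.

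The main obstacle is the globalization step. One must show that the purely local Kleinian deformation pictures near the leaves $\L_i$ actually assemble into a global description of $\mc{X}\to B_X$ — in particular that $B_X$ splits as the stated product and that $q$ has exactly the expected fibers — which requires controlling the Poisson $T^1$ sheaf on all of $X$ and invoking the unobstructedness theorem, and a verification that the local simultaneous resolutions over the various $\cartanh_i$ are coherently cut out by the single resolution $Y$; this last point is where conicity is essential, since it rigidifies the otherwise non-canonical choice of simultaneous resolution.
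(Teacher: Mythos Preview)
The paper does not prove this statement: it is quoted verbatim as \cite[Theorem 1.1]{namikawa2010poisson} and used as a black box. There is therefore no ``paper's own proof'' to compare against; what you have written is a sketch of Namikawa's original argument, and indeed your three steps---relative affinization to produce $q$, the Brieskorn--Slodowy local model at each codimension~$2$ leaf, and globalization via unobstructedness of graded Poisson deformations---are the backbone of that proof.

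Your sketch is broadly faithful to Namikawa, but the gap you flag at the end is real and not minor. The assertion that $B_X$ splits as $H^2(X^{\reg},\C)\oplus\bigoplus_i\cartanh_i/W_i$, with $q$ becoming $\id\times\prod_i(\cartanh_i\to\cartanh_i/W_i)$, is exactly the content of the theorem and cannot be read off directly from the local pictures plus \Cref{Ivan's decomposition of cartan space}. Namikawa's route is to first prove that $q$ is a finite \emph{Galois} covering (this uses the period map and a nontrivial monodromy argument over the complement of the discriminant), compute its Galois group by analyzing the monodromy of the simultaneous resolution near each $\L_i$, and only then identify that group with $\prod_i W_i$. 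Your appeal to ``conicity rigidifies the choice of simultaneous resolution'' is not enough on its own: one still has to show that the local monodromies at different leaves are independent and that they account for \emph{all} of the deck transformations of $q$, which is where the period map and the structure of the discriminant enter. So the outline is correct in spirit, but the last paragraph would need to be replaced by a genuine monodromy/Galois argument rather than a sheaf-theoretic patching of local $T^1$'s.
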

	When $\M_0^\theta(v,w)$ is nonempty, its affinization $\M_0(v,w)$ is isomorphic to some $\M_0^0(Q',v',w')$ for a subquiver $Q'\subset Q$ (with the possibility $Q' = Q$), and some nonzero dimension vector and framing vector $v',w'$ associated to $Q'$. This is proved using Maffei's isomorphisms in \cite[Proposition 2.14]{wu2023namikawa}. Therefore, by \cite[Theorem 1.5]{bellamy2021symplectic}, $\M_0(v,w)$ has symplectic singularities and admits a conical symplectic resolution. 
	\begin{definition}
		We denote the Namikawa-Weyl group of $\M_0(v,w)$ by $\NamiWeyl(v,w)$.
	\end{definition}
	In the setup of \Cref{Namikawa diagram of universal deformation }, let $X = \M^0_0(v,w)$, $Y = \M_0^\theta(v,w)$, and assume $Y\to X$ is a conical symplectic resolution of singularities (for example, when $\mu$ is flat). 
	The natural map $\M_\mathfrak{p}^\theta(v,w) \to \mathfrak{p}$
	is a conical Poisson deformation of $X$. 
	By the universality of the deformation $\mathcal{Y} \to B_Y$, there is a unique $\C^*$-equivariant map $\kappa: \mathfrak{p}\to B_Y = H^2(\M_0^\theta(v,w),\C) $, such that the following diagram is Cartesian:
	\begin{equation}\label{diag: kappa cartesian}
		\begin{tikzcd}
			\M_{\mathfrak{p}}^\theta(v,w) \arrow[r] \arrow[d,"\mu"] & \mathcal{Y} \arrow[d,"m_Y"] \\ 
			\mathfrak{p} \arrow[r,"\kappa"] & B_Y
		\end{tikzcd}
	\end{equation}
We will give more information about $\kappa$ in \Cref{theorem: kappa is chern class and surjective}. 
	\subsection{Goal and structure of the paper}
	Suppose $Y\to X$ is a conical symplectic resolution, and $\NamiWeyl$ is the Namikawa-Weyl group of $X$. 
	In \cite{mcgerty2019springer}, the authors construct the \textit{symplectic Springer} action of $\NamiWeyl$ on $H^*(Y,\C)$. We will recall this construction in \Cref{subsection: The symplectic Springer action}. In particular, there is a symplectic Springer action of $\NamiWeyl(v,w)$ on $H^*(\M_0^\theta(v,w),\C)$. 	
	On the other hand, \cite{maffei2002remark} constructed an action of $W(v,w)$ on $H^*(\M^\theta_0(v,w),\C)$, which we call  \textit{Maffei's action}, and we will recall it in \Cref{section:W action on H2}. 
	
	When $Q$ is a Dynkin quiver of type $ADE$ and $(\tilde{v},\alpha_i) \le 0$ for all $i\in Q_0$, $W(v,w) = \NamiWeyl(v,w)$ by \cite[Theorem 5.4]{mcgerty2019springer}. 
	In \cite[Remark 5.9]{mcgerty2019springer}, the authors raise questions on the relations between $W(v,w)$ and $\NamiWeyl(v,w)$, and the Maffei and symplectic Springer actions on $H^*(\M_0^\theta(v,w),\C)$ for a general quiver. We aim to answer these questions. 
	
	For any quiver $Q$ without edge loops denote the \textit{fundamental region} of $Q$ to be 
	\begin{equation}\label{eq: def of fundamental domain}
		\F_Q = \{\alpha\in\cartanh_Q|(\alpha,\alpha_i)\le 0 \ \forall i\in Q_0\}. 
	\end{equation}
	In \cite{crawley2001geometry} the author defined an important set $\Sigma_0$ of positive roots of a quiver, which we recall in \Cref{CB simple dimension criteria}. The main result of this paper is the following. 
	\begin{theorem}\label{theorem:W vs NW}
		Let $Q$ be a quiver without edge loops, $\theta$ be generic, $\tilde{v}\in \F_{Q^\infty}$, and $\M_0^\theta(v,w)$ be nonempty. 
		Then there is a direct product decomposition $W(v,w) = W(v,w)_{\Irr} \times W(v,w)_{\Ess}$, satisfying the following properties.
		\begin{enumerate}
			\item The subgroup $W(v,w)_{\Irr}$ is either trivial or a direct product of type $A$ Weyl groups. When $\tilde{v}\in\Sigma_0$, $W(v,w)_{\Irr}$ is trivial. 
			\item The restriction of Maffei's action on $H^2(\M_0^\theta(v,w),\C)$ to $W(v,w)_{\Irr}$ is trivial. 
			\item There is an embedding $\iota: W(v,w)_{\Ess} \hookrightarrow \NamiWeyl(v,w)$, and the natural action of $\NamiWeyl(v,w)$ on $H^2(\M_0^\theta(v,w),\C)$ and the Maffei's action of $W(v,w)_{\Ess}$ on $H^2(\M_0^\theta(v,w),\C)$ are intertwined under $\iota$. 
			\item The map $1\times \iota: W(v,w)_{\Irr} \times W(v,w)_{\Ess} \to \NamiWeyl(v,w), (\sigma_1,\sigma_2) \mapsto \iota(\sigma_2),$ intertwines the Maffei's action of $W(v,w)$ and the symplectic Springer action of $\NamiWeyl(v,w)$ on $H^*(\M_0^\theta(v,w),\C)$.  
		\end{enumerate}
	\end{theorem}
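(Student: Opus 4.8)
The plan is to prove the four assertions essentially in order, with (3) and (4) carrying the real content. Throughout I use the unframed model $\M_0^\theta(\tilde v)=\M_0^\theta(Q^\infty,\tilde v)$, so that $W(v,w)=\Stab_{W_Q}(\tilde v)$ with $W_Q\subset W_{Q^\infty}$ acting on $\cartanh_{Q^\infty}$. First I would pin down the combinatorics of $W(v,w)$. Since $\tilde v\in\F_{Q^\infty}$, the standard description of point stabilizers in the fundamental domain of the Tits cone (Kac) shows that $W(v,w)$ is generated by the simple reflections it contains, namely $\{s_i:i\in Q_0,\ (\tilde v,\alpha_i)=0\}$; intersecting the resulting parabolic subgroup of $W_{Q^\infty}$ with $W_Q$ identifies $W(v,w)$ with the Weyl group $W_{Q^\circ}$ of the full subquiver $Q^\circ\subset Q$ on those vertices, hence with $\prod_k W_{Q^\circ_k}$ over the connected components $Q^\circ_k$. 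Using $\M_0^\theta(v,w)\ne\emptyset$ with $\theta$ generic together with Crawley--Boevey's results (after splitting off the direct factors coming from connected components of $\Supp\tilde v$), each $Q^\circ_k$ is forced to be of finite type. Declaring $W(v,w)_{\Irr}$ to be the product of the $W_{Q^\circ_k}$ over the type-$A$ components whose restriction of $\tilde v$ can be reflected away without changing $\M_0(v,w)$ --- equivalently those not lying in $\Sigma_0$ for the relevant subquiver --- and $W(v,w)_{\Ess}$ the product over the remaining components yields the stated direct product; the description of $\Sigma_0$ in \Cref{CB simple dimension criteria} gives the clause that $W(v,w)_{\Irr}$ is trivial when $\tilde v\in\Sigma_0$, and the shape of the type-$A$ excess pieces gives the rest of (1).

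The central tool for (2) and (3) is the classifying map $\kappa:\mathfrak{p}\to H^2(\M_0^\theta(v,w),\C)=\cartanh_X$ of \eqref{diag: kappa cartesian}, which by \Cref{theorem: kappa is chern class and surjective} is surjective and linear, realized by first Chern classes of the tautological bundles. For $s_i\in W(v,w)$, Maffei's reflection isomorphism upgrades to an isomorphism of the Poisson families $\M_\mathfrak{p}^\theta(\tilde v)\to\mathfrak{p}$ and $\M_\mathfrak{p}^{s_i\theta}(\tilde v)\to\mathfrak{p}$ intertwining the natural $s_i$-action on $\mathfrak{p}$ (the restriction of \eqref{eq: formula of si act on chi}); composing with the canonical identification --- over the open locus of generic $\lambda$, where the stability parameter is irrelevant --- of the cohomologies of the two symplectic resolutions $\M_0^\theta(\tilde v)$ and $\M_0^{s_i\theta}(\tilde v)$ of $\M_0(v,w)$, one obtains that $\kappa$ is equivariant for the $W(v,w)$-action on $\mathfrak{p}$ and a resulting linear action of $W(v,w)$ on $\cartanh_X$, which is precisely Maffei's action on $H^2$; in particular $\ker\kappa$ is $W(v,w)$-stable. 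To see that this action factors through $\NamiWeyl(v,w)$, I restrict to a transverse slice at each codimension $2$ leaf $\L_i$: by \Cref{remark: projection from H2 to cartanh i} the projection $\cartanh_X\twoheadrightarrow\cartanh_i$ is restriction to $\rho^{-1}(U)\cong\tilde S_i\times\Delta^{\dim X-2}$, and since Maffei's reflection isomorphisms are compatible with $\rho$ they restrict to reflection isomorphisms of this Kleinian model, where the induced action on $H^2(\tilde S_i,\C)\cong\hat{\cartanh}_i^*$ is the classical Weyl-group action (compatible with the $\pi_1(\L_i)$-folding). Hence each essential generator acts through a single reflection in one of the $W_i$, while the inessential generators act trivially on every $\cartanh_i$; combined with triviality of the $W(v,w)$-action on the summand $H^2(X^{\reg},\C)$ in \eqref{eq: decomposition of namikawacartan space}, this gives both the embedding $\iota:W(v,w)_{\Ess}\hookrightarrow\NamiWeyl(v,w)$ intertwining the $H^2$-actions and the triviality statement (2). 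Injectivity of $\iota$ follows by checking that $W(v,w)_{\Ess}\to\NamiWeyl(v,w)\to\GL(\cartanh_X)$ is faithful, using faithfulness of the $\NamiWeyl(v,w)$-action and that distinct essential components contribute to distinct leaves.

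For (4) I would upgrade the $H^2$-statement to all of $H^*$. Both Maffei's action and the symplectic Springer action are by graded ring automorphisms of $H^*(\M_0^\theta(v,w),\C)$ --- the former since reflection isomorphisms are isomorphisms of varieties, the latter by the monodromy description of \cite{mcgerty2019springer} --- and both are compatible with the Nakajima-type correspondence operators on cohomology. For an essential $s_i$ the plan is to show that Maffei's isomorphism coincides with the monodromy of the universal deformation $\mc{Y}\to\cartanh_X$ around the corresponding reflection hyperplane: the family isomorphism $\M_\mathfrak{p}^\theta(\tilde v)\to\M_\mathfrak{p}^{s_i\theta}(\tilde v)$ together with the wall-crossing identification over the generic-$\lambda$ locus realizes exactly the parallel transport defining the symplectic Springer action of $\iota(s_i)$, so the two actions of $W(v,w)_{\Ess}$ agree on all of $H^*$. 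For an inessential $s_i$, Maffei's isomorphism restricts to the identity near every codimension $2$ leaf (as it is trivial on each $\cartanh_i$ and on $H^2(X^{\reg},\C)$), and a rigidity argument forces its action on $H^*$ to be trivial; assembling this with the previous steps shows that $1\times\iota$ intertwines the two actions.

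The main obstacle I anticipate is this last comparison in degrees $>2$: matching Maffei's discretely-defined reflection isomorphism with the monodromy operator of \cite{mcgerty2019springer}, and showing that the inessential generators act trivially on all of $H^*$ and not merely on $H^2$. This requires a genuine rigidity input --- either that $H^*$ is generated by $H^2$ (the ``favourable'' situation alluded to in the abstract), or that both actions are pinned down by compatibility with the universal deformation and with the correspondence operators --- and making the local-to-global analysis near the codimension $2$ leaves control all cohomological degrees is where the substantive work lies. A lesser subtlety is in the first step, where the nonemptiness hypothesis must be used in an essential way to guarantee that every $Q^\circ_k$ is of finite type, so that $W(v,w)$ is finite and the direct-product splitting $W(v,w)=W(v,w)_{\Irr}\times W(v,w)_{\Ess}$ exists at all.
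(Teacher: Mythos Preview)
Your overall architecture is close to the paper's, but there are two genuine gaps and one place where you are making your life harder than necessary.

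\textbf{Part (3): localizing Maffei's isomorphism.} Your key step is: ``since Maffei's reflection isomorphisms are compatible with $\rho$ they restrict to reflection isomorphisms of this Kleinian model.'' This is not justified and is not how the paper proceeds. The isomorphism $\Phi_{s_j}$ is built globally from the correspondence variety $Z^{ss}$ and the two principal-bundle projections; there is no reason it should respect an analytic product decomposition $\rho^{-1}(U)\cong\tilde S_i\times\Delta$ near a codimension-$2$ leaf, nor that its restriction there be the classical reflection isomorphism of the minimal resolution. The paper avoids this entirely: it first proves (\Cref{prop: tautological line bundle and maffei}, \Cref{proposition:si act on chern class}) the explicit formula $s_j\star\kappa(\chi)=\kappa(s_j^*\chi)$ on Chern classes, then uses the explicit projection $p_i\kappa(\chi)=(\chi\cdot\beta_2,\ldots,\chi\cdot\alpha_{i_t})$ to compare $p_i\kappa(s_j^*\chi)$ with the Weyl-group formula on $\cartanh_i$ purely combinatorially. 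The triviality on $H^2(X^{\reg},\C)$ is also done by hand, by writing down a nowhere-vanishing section of $\sheafO(\chi-s_j^*\chi)$ over the regular locus. Your outline does not supply a mechanism for either step.

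\textbf{Part (4): you have the logic inverted.} You treat ``inessential $s_i$ acts trivially on all of $H^*$'' as an input requiring a separate rigidity argument, and you speculate that one might need $H^*$ to be generated by $H^2$. The paper needs no such input. Once (2) and (3) give $\kappa\circ\sigma^*=\gamma\circ\kappa$ on $\mathfrak p$ with $\gamma=(1\times\iota)(\sigma)$, the single Cartesian-diagram chase in Section~\ref{wrap up part 4} shows that Maffei's $\Phi_\sigma$ and the deformation-family map $\Psi_\gamma$ agree as isomorphisms of fibres over generic $\lambda$, hence $\sigma\star=\gamma*$ on all of $H^*$ for \emph{every} $\sigma\in W(v,w)$. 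For $\sigma\in W(v,w)_{\Irr}$ this gives triviality on $H^*$ as a \emph{consequence}, not a hypothesis. Incidentally, the ``favorable situations'' in the abstract refers to the map $W(v,w)\to\NamiWeyl(v,w)$ being injective (i.e.\ $W(v,w)_{\Irr}=1$), not to $H^*$ being generated in degree $2$.

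\textbf{Part (1): definition of the splitting.} Your description of $W(v,w)_{\Irr}$ as ``the type-$A$ components that can be reflected away'' is in the right spirit but imprecise. The paper defines the splitting via Crawley--Boevey's canonical-decomposition algorithm (\Cref{prop: decompose tildev into a string}): $\tilde v$ breaks into imaginary pieces $v_1,\ldots,v_k$ joined by type-$A$ bridges $\beta_1,\ldots,\beta_{k-1}$, and $W(v,w)_{\Irr}$ is generated by the $s_j$ with $j\in\Supp\beta_r$; one then checks directly that these commute with the essential generators. Your claim that nonemptiness forces every $Q^\circ_k$ to be finite type is not proved in the paper and is not needed for the decomposition.
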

	In \Cref{section:Preliminaries on weyl groups} we present some preliminary results on $W(v,w)$ and $\NamiWeyl(v,w)$, and examine the canonical decomposition of Crawley-Boevey under the assumption $\tilde{v}\in\F_{Q^\infty}$. 
	In \Cref{subsection: The symplectic Springer action}, we recall the construction of the symplectic Springer action. 
	In \Cref{section:W action on H2}, we recall the notion of tautological line bundles, and explicitly calculate the Maffei's action of $W(v,w)$ on $H^2(\M_0^\theta(v,w),\C)$. This will prove parts (2) and (3) of \Cref{theorem:W vs NW}, and we will prove (4) in \Cref{wrap up part 4}. 
	\subsection*{Acknowledgments}
	I am grateful to Ivan Losev for fruitful discussions and numerous remarks that improve the exposition of this paper, in particular for pointing out that (4) of \Cref{theorem:W vs NW} is not hard once (1)-(3) are proven. I thank Alberto San Miguel Malaney for reading a first version of this paper and providing many useful comments. I also thank Pablo Boixeda Alvarez, Do Kien Hoang and Junliang Shen for useful discussions. 
	\section{The groups $W$ and $\NamiWeyl$}\label{section:Preliminaries on weyl groups}
	In this section we study the groups $W(v,w)$ and $\NamiWeyl(v,w)$. We define the components $W(v,w)_{\Irr}$ and $W(v,w)_{\Ess}$ and give examples that illustrate the relation between $W(v,w)$ and $\NamiWeyl(v,w)$. 
	\subsection{Generators of $W(v,w)$}
	We assume $\tilde{v} \in \F_{Q^\infty}$, the fundamental domain . 
	\begin{proposition}\label{prop:generators of Wvw}
		The group $W(v,w)$ is generated by 
		\begin{equation}\label{eq:gen of Wvw}
			\{s_i|i\in Q_0, (\alpha_i,\tilde{v}) = 0\}. 
		\end{equation}
	\end{proposition}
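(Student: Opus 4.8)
The plan is to prove the claimed equality $W(v,w) = \langle s_i : i\in Q_0,\ (\alpha_i,\tilde{v})=0\rangle$ by checking the two inclusions separately. One direction is immediate: since $s_i(\tilde{v}) = \tilde{v} - (\alpha_i,\tilde{v})\,\alpha_i$, the reflection $s_i$ fixes $\tilde{v}$ precisely when $(\alpha_i,\tilde{v})=0$, so each listed generator already lies in $W(v,w)$. The substance is the reverse inclusion: every $w\in W(v,w)$ lies in $W_J := \langle s_i : i\in J\rangle$, where $J := \{\,i\in Q_0 : (\alpha_i,\tilde{v})=0\,\}$.

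I would prove this by induction on the Coxeter length $\ell(w)$ of $w$ with respect to the generators $\{s_i : i\in Q_0\}$ of $W_Q$. The base case $\ell(w)=0$ is trivial. For $w\neq e$, choose $i\in Q_0$ with $\ell(s_iw)<\ell(w)$ (for instance the first letter of a reduced expression for $w$). By the standard length criterion in a Kac--Moody Weyl group, $\ell(s_iw)<\ell(w)$ is equivalent to $w^{-1}(\alpha_i)$ being a negative root; write $w^{-1}(\alpha_i)=-\beta$ with $\beta=\sum_{j\in Q_0}b_j\alpha_j$, $b_j\in\ZZ_{\ge 0}$, where $\beta$ is supported on $Q_0$ because $w\in W_Q$ and $i\in Q_0$. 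Using $W_Q$-invariance of the Tits form together with $w(\tilde{v})=\tilde{v}$,
\[
(\alpha_i,\tilde{v}) = \bigl(w^{-1}(\alpha_i),\tilde{v}\bigr) = -\sum_{j\in Q_0} b_j\,(\alpha_j,\tilde{v}).
\]
Since $\tilde{v}\in\F_{Q^\infty}$ forces $(\alpha_j,\tilde{v})\le 0$ for every $j\in Q_0$, and all $b_j\ge 0$, the right-hand side is $\ge 0$; combined with $(\alpha_i,\tilde{v})\le 0$ this gives $(\alpha_i,\tilde{v})=0$, i.e.\ $i\in J$. Hence $s_i\in W(v,w)$ and $s_iw\in W(v,w)$ with $\ell(s_iw)<\ell(w)$, so by the inductive hypothesis $s_iw\in W_J$, and therefore $w=s_i(s_iw)\in W_J$.

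The inputs are all standard facts about Kac--Moody Weyl groups: $W_Q$-invariance of the symmetric bilinear form (which can also be checked directly from $s_i(\alpha_j)=\alpha_j-c_{ij}\alpha_i$ using $c_{ij}=c_{ji}$ and $c_{ii}=2$), the equivalence $\ell(s_iw)<\ell(w)\iff w^{-1}(\alpha_i)<0$, and the fact that a positive root has nonnegative coefficients in the simple-root basis. I do not anticipate a genuine obstacle here; the only delicate points are the bookkeeping that $w^{-1}(\alpha_i)$ stays supported on $Q_0$ — so that only the inequalities $(\alpha_j,\tilde{v})\le 0$ for $j\in Q_0$ are needed, which is exactly the part of $\tilde{v}\in\F_{Q^\infty}$ we use — and the observation that one cannot simply quote the classical statement ``the stabilizer of a dominant weight is a standard parabolic'', because $\F_{Q^\infty}$ is the \emph{anti}-dominant region, which in non-finite type need not meet the Tits cone; the length induction above sidesteps this issue.
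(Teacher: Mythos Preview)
Your proof is correct. The paper takes a closely related but slightly different route: it introduces the linear functional $P:=(\tilde v,-)\in\mathfrak h_Q^*$, notes that $\Stab_{W_Q}(\tilde v)\subseteq\Stab_{W_Q}(P)$, and then invokes \cite[Section~5.13]{humphreys1992reflection} to conclude that $\Stab_{W_Q}(P)$ is the parabolic subgroup generated by $\{s_i:\langle P,\alpha_i\rangle=0\}$; since these generators visibly fix $\tilde v$, the inclusion is an equality. Your length induction is essentially an inline proof of the Humphreys result, so the arguments are the same under the hood; the advantage of yours is that it is self-contained, while the paper's is shorter by outsourcing the work. Your closing remark deserves a small correction: the classical statement \emph{does} apply here, because passing to the functional $P$ puts $-P$ in the closed fundamental chamber of $\mathfrak h_Q^*$ (which is always in the Tits cone), so there is no obstruction to quoting it---and indeed this is exactly what the paper does. (A stylistic point: avoid reusing the letter $w$ for Weyl-group elements, since it already denotes the framing vector.)
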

	\begin{proof}
		Set $P := ( \tilde v, - ) \in \cartanh_{Q}^*$, so that $\<P,\alpha_i\>\le 0$ for all $i\in Q_0$. 
		It follows from the definition of $P$ that 
		\begin{equation}\label{eq:stab of root lies in stab of functional on roots}
			\Stab_{W_{Q}}( \tilde{v}) \subseteq \Stab_{W_{Q}}(P).
		\end{equation}
		By \cite[Section 5.13]{humphreys1992reflection},  $\Stab_{W_{Q}}(P)$ is a parabolic subgroup of $W_{Q}$, 
		and it is generated by $\{s_i|i\in Q_0, \<P,\alpha_i\> = 0\}$. It is easy to check that all of these generators lie in $\Stab_{W_{Q}}( \tilde{v}) $ as well, and therefore \eqref{eq:stab of root lies in stab of functional on roots} is an equality. But $\<P,\alpha_i\> = 0$ precisely when $(\alpha_i,\tilde{v}) = 0$, so the set \eqref{eq:gen of Wvw} generates $W(v,w)$. 
	\end{proof}
	
	\subsection{Symplectic leaves of quiver varieties}
	We describe the symplectic leaves of $\M_\lambda^0(v,w)$, following \cite[Section 3.v]{nakajima1998quiver}. Since $\M_\lambda^0(v,w) \cong \M_{{\lambda}}^0(\tilde{v})$, we may assume $w=0$. The symplectic leaves are classified by \textit{representation types}.
	More precisely, 
	let $x\in\M_\lambda^0(v)$ and $r \in T^*R$ be a representative of $x$. Since the $G$-orbit of $r$ is closed, $r$ is semisimple as a $Q$-representation. 
	\begin{definition}
		Suppose $r = r_1^{\oplus n_1} \oplus ... \oplus r_k^{\oplus n_k}$, 
		where $r_i$'s are pairwise non-isomorphic irreducible $Q^\infty$-subrepresentations of $r$. Write $v^i = \dim r_i \in \ZZ_{\ge 0}^{{Q^\infty_0}}$. We say 
		\[\tau = (v^1,n_1;v^2,n_2;...;v^k,n_k)\]
		is the \textit{representation type} of $x$. 
	\end{definition}
	By \cite[Section 3.iv]{nakajima1998quiver}, a symplectic leaf of $\M_\lambda^0(\tilde{v})$ constitutes of all elements with a fixed representation type. We will often say a symplectic leaf is associated to a representation type $\tau$, and denote the leaf by $\L_\tau$. 
	
	The following results of Crawley-Boevey characterizes the dimension vectors that can appear in a representation type. 
	To state it, we define
	\begin{equation}
		p:  \ZZ_{\ge 0}^{Q_0 } \to \ZZ, v \mapsto 1-\frac{1}{2}(v,v).
	\end{equation}
	\begin{thmdef}[{\cite[Theorem 1.2]{crawley2001geometry}}]{\label{CB simple dimension criteria}}
		The following conditions are equivalent.
		\begin{enumerate}
			\item There is a simple representation of $Q^\infty$ in  $\mu\inverse(\tilde{{\lambda}})$ with dimension vector $v'$.
			\item $v'$ is a positive root of ${Q^\infty}$, $\tilde{{\lambda}}\cdot v' = 0$, and for any decomposition $v' = \beta^1+ ... + \beta^n$ where $n\ge 2$ and $\beta^i$ are positive roots of ${Q^\infty}$ such that $\beta^i\cdot \tilde{{\lambda}}=0$, we have $p(v') > \sum_{i=1}^n p(\beta^i)$. 
		\end{enumerate}
		We denote by $\Sigma_{\tilde{\lambda}}$ the set of positive roots that satisfy the above conditions. 
	\end{thmdef}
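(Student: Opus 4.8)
The plan is to follow Crawley-Boevey's strategy, working throughout with the identification of $\mu\inverse(\tilde{\lambda})$ with the representation variety of the deformed preprojective algebra $\Pi^{\tilde{\lambda}}$ of $Q^\infty$: a point of $\mu\inverse(\tilde{\lambda})$ is exactly a $\Pi^{\tilde{\lambda}}$-module of dimension vector $v'$, and it is a \emph{simple} such module precisely when the corresponding $Q^\infty$-representation is simple. The single computation underlying everything is the Ext-symmetry formula for preprojective algebras: for $\Pi^{\tilde{\lambda}}$-modules $M,N$ of dimension vectors $\alpha,\beta$ one has $\dim\operatorname{Ext}^1(M,N)=\dim\Hom(M,N)+\dim\Hom(N,M)-(\alpha,\beta)$. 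Taking $M=N$ simple gives $\dim\operatorname{Ext}^1(M,M)=2-(v',v')=2p(v')$, so $p$ measures the dimension of self-extensions of a simple module. Geometrically this matches the stratification of $\M^0_{\tilde{\lambda}}(\tilde v)$ by representation type: the stratum $\L_\tau$ attached to a decomposition $v'=\beta^1+\cdots+\beta^n$ into dimension vectors of simple summands has $\dim\L_\tau=2\sum_{i}p(\beta^i)$, while the open stratum of simple representations (the case $n=1$, $\beta^1=v'$) has dimension $2p(v')$. This dictionary is what makes the numerical inequality in (2) geometrically meaningful, and I would set it up first, citing \cite{crawley2001geometry} and \cite[Section 3]{nakajima1998quiver}.

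For the implication (1) $\Rightarrow$ (2) I would argue as follows. The condition $\tilde{\lambda}\cdot v'=0$ is immediate from summing the traces of the vertex components of the moment-map equation $\mu=\tilde{\lambda}$: every commutator contributes trace $0$, while the right-hand side contributes $\sum_i\tilde{\lambda}_i v'_i$. That $v'$ is a positive root follows from the Ext formula, since existence of a simple $M$ forces $2p(v')=\dim\operatorname{Ext}^1(M,M)\ge 0$, hence $(v',v')\le 2$, and a simple module has connected support; these together pin $v'$ down to a positive root of $Q^\infty$. For the strict inequality, suppose $v'=\beta^1+\cdots+\beta^n$ with $n\ge 2$ and each $\beta^i$ a positive root with $\beta^i\cdot\tilde{\lambda}=0$. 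A simple module lies in the open simple stratum and in no proper stratum $\L_\tau$; since the simple stratum is nonempty of dimension $2p(v')$ and the variety is the closure of it, every other stratum must be of strictly smaller dimension, giving $2p(v')>\dim\L_\tau=2\sum_i p(\beta^i)$, i.e. $p(v')>\sum_i p(\beta^i)$.

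The substantive direction is (2) $\Rightarrow$ (1), constructing a simple module from the numerical data. The plan is to induct on $v'$ using the reflection (Weyl-group) functors at the vertices of $Q^\infty$: these are isomorphisms $\mu\inverse(\tilde{\lambda})\to\mu\inverse(s_i\tilde{\lambda})$ that preserve simplicity, act on dimension vectors by $s_i$, and carry both conditions of the statement to the analogous conditions for $s_i\tilde{\lambda}$. Applying reflections repeatedly, one reduces either to a real root (dimension vector $\alpha_j$, where a one-dimensional simple exists automatically) or to the case $v'\in\F_{Q^\infty}$. In the fundamental region I would establish existence by the dimension count: the inequalities in (2) say exactly that every non-simple stratum $\L_\tau$ has dimension $2\sum p(\beta^i)<2p(v')$, so once one knows the ambient fiber is irreducible of dimension $2p(v')$ (after passing to the quotient) its simple locus, being the complement of the lower-dimensional decomposable strata, must be a nonempty open set.

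The main obstacle is precisely this last existence step, namely the control of $\dim\mu\inverse(\tilde{\lambda})$ and the nonemptiness of the simple locus. This rests on the complete-intersection theorem for $\mu\inverse(\tilde{\lambda})$ — equivalently, on the moment map being flat onto its image in the relevant range so that the fiber has the expected dimension — which is itself a nontrivial input, together with an inductive bookkeeping that ensures each candidate summand $\beta^i$ appearing in a decomposition actually supports a simple module (so that the corresponding stratum is genuinely present and the recursion is well-founded). Coordinating the reflection-functor reduction with these dimension estimates, and separately handling the boundary behaviour at isotropic imaginary roots — where simple modules exist in dimension vector equal to the root itself but not in any proper multiple, exactly matching the failure of the strict inequality $p(mv')=1>m=\sum p(v')$ — is where the real work concentrates.
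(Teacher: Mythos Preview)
The paper does not supply a proof of this statement: it is quoted as \cite[Theorem 1.2]{crawley2001geometry} and used as a black box, so there is no ``paper's own proof'' to compare against. Your outline is a faithful sketch of Crawley-Boevey's original argument and correctly identifies where the real work lies (the flatness/complete-intersection input and the reflection-functor reduction for (2)$\Rightarrow$(1)).

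That said, your (1)$\Rightarrow$(2) has a genuine gap in the inequality step. Condition (2) ranges over decompositions $v'=\beta^1+\cdots+\beta^n$ into arbitrary positive roots with $\beta^i\cdot\tilde\lambda=0$; there is no assumption that each $\beta^i$ actually supports a simple $\Pi^{\tilde\lambda}$-module, so there need not be any stratum $\L_\tau$ of $\M^0_{\tilde\lambda}$ with that representation type, and comparing $2p(v')$ to the dimension of a possibly nonexistent stratum proves nothing. Moreover, your claim that ``the variety is the closure of the simple stratum'' presupposes irreducibility of $\mu\inverse(\tilde\lambda)$, which is itself one of the hard inputs (equivalent to the flatness you invoke only later). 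Crawley-Boevey's actual argument for this direction does not go through strata: he shows, via an induction using the canonical decomposition and upper-semicontinuity of the number of parameters, that if $p(v')\le\sum p(\beta^i)$ for some admissible decomposition then the general point of $\mu\inverse(\tilde\lambda)$ is decomposable, hence no simple exists. Your argument that $v'$ is a root is also too quick: $(v',v')\le 2$ with connected support does not by itself force $v'$ to be a root of the Kac--Moody algebra; one needs Kac's theorem that dimension vectors of indecomposables are roots.
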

	Note that if $\tilde{v}\in \Sigma_0$, then $\tilde{v}\in \F_{Q^\infty}$, the fundamental domain of $Q^\infty$, but not vice versa. 
	
	Suppose $\tilde{v}\in \F_{Q^\infty}$. To compute the Namikawa-Weyl group of $\M_0^0(v,w)$ it is useful to find its codimension 2 leaves. The following result computes the dimension of the symplectic leaves of $\M_0^0(v,w)$. 
	\begin{theorem}[{\cite[Theorem 1.3]{crawley2001geometry}}]{\label{CB strata dimension formula}}
		Let $\tau = (v^1,n_1;...;v^k,n_k)$ be a representation type such that $\sum_{i=1}^k n_iv^i = \tilde{v}$.
		The stratum of $\M_{\tilde{{\lambda}}}^0(\tilde{v})$ associated to $\tau$ has dimension
		\begin{equation}d(\tau) = 2\sum_{i=1}^k p(v^i).\end{equation}
	\end{theorem}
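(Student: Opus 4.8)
The plan is to reduce the statement to the single-block case $\tau = (v';1)$ — the dimension of the locus of \emph{simple} representations of a fixed dimension vector $v'$ — and then to compute that dimension by a moment-map rank count.

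First I would dispose of the reduction to a single block. A point of the stratum $\L_\tau$ is a semisimple $Q^\infty$-representation $r = r_1^{\oplus n_1}\oplus\cdots\oplus r_k^{\oplus n_k}$ with the $r_i$ pairwise non-isomorphic simples of dimension $v^i$. Writing $\mathrm{Simp}(v^i)$ for the moduli of isomorphism classes of simple representations of dimension $v^i$ in $\mu\inverse(\tilde{\lambda})$ — equivalently the stratum of type $(v^i;1)$ inside $\M_{\tilde{\lambda}}^0(v^i)$ — the assignment $(r_1,\dots,r_k)\mapsto \bigl[\,r_1^{\oplus n_1}\oplus\cdots\oplus r_k^{\oplus n_k}\,\bigr]$ defines a surjection $\prod_i \mathrm{Simp}(v^i)\to \L_\tau$ that is finite-to-one (injective up to permuting blocks with identical $(v^i,n_i)$). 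Hence $\dim\L_\tau = \sum_i \dim \mathrm{Simp}(v^i)$, and it suffices to prove $\dim\mathrm{Simp}(v') = 2p(v')$ for each $v'\in\Sigma_{\tilde{\lambda}}$.

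For the single-block computation I would work on the open locus $\mu\inverse(\tilde{\lambda})^{\mathrm{simp}}\subseteq\mu\inverse(\tilde{\lambda})$ of simple representations of dimension $v'$, on which $G = \GL(v')$ acts. Since a simple $Q^\infty$-representation over $\C$ has endomorphism ring $\C$ by Schur's lemma, the stabilizer of such a point is exactly the group of global scalars $\C^*$, so its Lie algebra $\g_x$ is one-dimensional. The general moment-map identity $(\im d\mu_x)^\perp = \g_x$ then shows that $d\mu_x$ has constant rank $\dim G - 1$ along the entire simple locus, and the value $\tilde{\lambda}$ lies in $\im d\mu_x = \mathrm{Ann}(\g_x)$ precisely because $\tilde{\lambda}\cdot v' = 0$, one of the defining conditions of $\Sigma_{\tilde{\lambda}}$ in \Cref{CB simple dimension criteria}. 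Consequently $\mu\inverse(\tilde{\lambda})^{\mathrm{simp}}$ is smooth of dimension $\dim T^*R - (\dim G - 1)$, and quotienting by the $G$-action, which is free modulo the global scalars, gives
\[
\dim\mathrm{Simp}(v') = \bigl(\dim T^*R - (\dim G -1)\bigr) - (\dim G - 1) = \dim T^*R - 2\dim G + 2.
\]
Using the identity $(v',v') = 2\dim G - \dim T^*R$ — immediate from the definition of the Tits form, since $T^*R$ is the representation space of the doubled quiver and $\dim G = \sum_i (v'_i)^2$ — the right-hand side collapses to $2 - (v',v') = 2p(v')$, as desired.

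The main obstacle is the middle step: one must verify cleanly that on the simple locus the moment map has locally constant maximal rank and that $\tilde{\lambda}$ is a regular value there, so that $\mu\inverse(\tilde{\lambda})^{\mathrm{simp}}$ genuinely has the expected dimension with no excess component. This rests on the moment-map orthogonality relation together with the identification of the stabilizer Lie algebra via Schur's lemma, and on the nonemptiness of the simple locus, which is guaranteed exactly by $v'\in\Sigma_{\tilde{\lambda}}$ through \Cref{CB simple dimension criteria}. An alternative to the rank count — expressing the same numerology through $\dim\operatorname{Ext}^1(r,r) = 2p(v')$ for a simple $r$ via the bimodule resolution of the (deformed) preprojective algebra — would compute the tangent space to $\mathrm{Simp}(v')$ directly, but the moment-map count is more self-contained and avoids invoking the $2$-Calabi--Yau duality.
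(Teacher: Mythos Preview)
The paper does not supply a proof of this statement; it is quoted directly from \cite[Theorem~1.3]{crawley2001geometry} and used as a black box. Your argument is correct and is essentially the one Crawley-Boevey gives there: the finite-to-one product map $\prod_i\mathrm{Simp}(v^i)\to\L_\tau$ reduces to the single-block case, and for a single block the moment-map orthogonality $\im d\mu_x=\mathrm{Ann}(\g_x)$ together with Schur's lemma yields $\dim\mathrm{Simp}(v')=\dim T^*R-2\dim G+2=2p(v')$.
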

	
	The following theorem is a special case of \cite[Definition 1.18, Theorem 1.20]{bellamy2021symplectic}. 
	\begin{theorem}\label{BS isotropic decomposition}
		Suppose $v\in \Sigma_0$ is an imaginary root. Then the codimension 2 leaves of $\M_0^0(v)$ corresponds to the representation types of the form
		\begin{equation}\label{eq: an isot decomp}
			\tau = (\beta_1,1;\beta_2,1;\cdots;\beta_s,1; \alpha_{i_1},m_1;\cdots;\alpha_{i_t},m_t)
		\end{equation}
		such that 
		\begin{enumerate}
			\item the $\beta_j$ are imaginary roots, and the $\alpha_{i_j}$ are pairwise distinct simple real roots;
			\item let $\underline{Q} $ be the quiver with $s+t$ vertices, with no edge loops, and $-(u_i,u_j)$ arrows between vertices $i$ and $j$, where $u_i,u_j\in \{\beta_1, \cdots \beta_s ,  \alpha_{i_1}, \cdots, \alpha_{i_t}\}$, then $\underline{Q} $ is an affine type quiver;
			\item the dimension vector $(1,1,\cdots,1;m_1,\cdots,m_t)$ (with $s$ copies of 1's) equals to the minimal imaginary root of $\underline{Q} $. 
		\end{enumerate}
		The slice Kleinian singularity has the same type as the quiver $\underline{Q}$. 
	\end{theorem}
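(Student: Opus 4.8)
Since $\M_0^0(v,w)\cong\M_0^0(\tilde{v})$ over the quiver $Q^{\infty}$, I would first reduce to the unframed case, which is already the form of the statement; so $v\in\Sigma_0$ is an imaginary root of $Q^{\infty}$. The assertion is a special case of the leaf-and-slice classification in \cite{bellamy2021symplectic}, and the plan is to recover it by combining \Cref{CB simple dimension criteria} and \Cref{CB strata dimension formula} with the \'etale-local structure of quiver varieties. Recall that the symplectic leaves of $\M_0^0(v)$ are the strata $\L_\tau$ of fixed representation type $\tau=(v^1,n_1;\dots;v^k,n_k)$ with $\sum_i n_i v^i=v$ and every $v^i\in\Sigma_0$, and that $\dim\L_\tau=2\sum_{i=1}^k p(v^i)$ by \Cref{CB strata dimension formula}. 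As $v\in\Sigma_0$, the open leaf is $\L_{(v,1)}$ of dimension $2p(v)$, so $\L_\tau$ has codimension $2$ exactly when $\sum_{i=1}^k p(v^i)=p(v)-1$.

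Next I would squeeze this condition. For every positive root $\alpha$ one has $p(\alpha)\ge 0$, with equality iff $\alpha$ is real (indeed $(\alpha,\alpha)\in\{2,0,-2,-4,\dots\}$). Applying the strict inequality of \Cref{CB simple dimension criteria} to the decomposition of $v$ into the $v^i$ taken with multiplicity $n_i$ gives $p(v)>\sum_i n_i p(v^i)\ge\sum_i p(v^i)$, hence $\sum_i p(v^i)\le p(v)-1$ over the integers. So codimension $2$ forces both inequalities to be equalities: every imaginary $v^i$ occurs with $n_i=1$, and $\sum_i n_i p(v^i)=p(v)-1$. Moreover any real root in $\Sigma_0$ is simple, because a non-simple positive real root $\gamma$ satisfies $(\gamma,\alpha_j)>0$ for some $j$, so that $\gamma=\alpha_j+(\gamma-\alpha_j)$ is a decomposition into positive roots violating \Cref{CB simple dimension criteria}. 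This puts $\tau$ into the form \eqref{eq: an isot decomp} with the $\beta_j$ imaginary roots of $\Sigma_0$ and the $\alpha_{i_l}$ pairwise distinct simple real roots, which is part (1).

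Then I would construct $\underline{Q}$. Write $u_1,\dots,u_{s+t}$ for the list $\beta_1,\dots,\beta_s,\alpha_{i_1},\dots,\alpha_{i_t}$. A representative $r$ of a point of $\L_\tau$ is semisimple with pairwise non-isomorphic simple summands $r_a$ of dimension $u_a$, and $\dim\operatorname{Ext}^1(r_a,r_b)=-(u_a,u_b)\ge 0$ for $a\ne b$, so $\underline{Q}$ is a loop-free quiver. A short computation separating the diagonal and off-diagonal terms of the Tits form, using $(\beta_j,\beta_j)=2-2p(\beta_j)$ and $(\alpha_{i_l},\alpha_{i_l})=2$, yields $p(v)=1-q_{\underline{Q}}(d)+\sum_{j=1}^s p(\beta_j)$ for $d=(1,\dots,1;m_1,\dots,m_t)$; together with $\sum_j p(\beta_j)=p(v)-1$ this gives $q_{\underline{Q}}(d)=0$. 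The remaining content --- that $\underline{Q}$ is connected of affine ADE type and $d$ is its minimal imaginary root $\delta$, with slice of the matching type --- I would deduce from the \'etale-local structure of quiver varieties (\cite{nakajima1998quiver}, \cite{bellamy2021symplectic}): an \'etale neighbourhood of $[r^{ss}]^0$ in $\M_0^0(v)$ is isomorphic, compatibly with the Poisson structure and the leaf stratification, to one of $0$ in $\M_0^0(\underline{Q},d)$, under which $\L_\tau$ becomes the leaf through $0$, namely the point-stratum of the totally split type. Hence the transverse slice to $\L_\tau$ is $\M_0^0(\underline{Q},d)$, a two-dimensional conical symplectic singularity with isolated singular point, i.e.\ a Kleinian singularity; the classical realization of two-dimensional quiver varieties via the McKay correspondence then forces $\underline{Q}$ to be affine ADE with $d=\delta$ and slice type equal to that of $\underline{Q}$, giving (2), (3) and the final sentence. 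The reverse inclusion runs the same computation backwards: any $\tau$ of the listed form is a genuine representation type with $\dim\L_\tau=2\sum_j p(\beta_j)=2\bigl(p(v)-1\bigr)$.

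I expect the main obstacle to be the last step: making the \'etale-local model precise with all its compatibilities, checking connectedness of $\underline{Q}$ (here $q_{\underline{Q}}(d)=0$ already rules out finite-type components, which contribute positively, and the zero-dimensional analysis rules out the remaining wild possibilities), and citing the exact form of the McKay identification that simultaneously recovers the diagram $\underline{Q}$ and the vector $\delta$. Steps one through three are routine bookkeeping with \Cref{CB simple dimension criteria} and \Cref{CB strata dimension formula}.
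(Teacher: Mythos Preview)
The paper does not prove this theorem: it is stated as a direct special case of \cite[Definition 1.18, Theorem 1.20]{bellamy2021symplectic} and no argument is given. So there is nothing to compare your proposal against in the paper itself.

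That said, your sketch is essentially a reconstruction of the relevant part of the Bellamy--Schedler argument, and the combinatorial portion is correct. The reduction via \Cref{CB strata dimension formula} to $\sum_i p(v^i)=p(v)-1$, the squeeze using \Cref{CB simple dimension criteria} to force $n_i=1$ on imaginary summands, the observation that real elements of $\Sigma_0$ are simple, and the Tits-form computation yielding $q_{\underline{Q}}(d)=0$ are all sound. Where your outline becomes genuinely dependent on outside input is exactly where you flag it: the \'etale-local identification of a neighbourhood of $[r^{ss}]^0$ with $\M_0^0(\underline{Q},d)$ compatible with the stratification, and the deduction that a two-dimensional quiver variety with $q(d)=0$ and isolated singularity forces $\underline{Q}$ affine with $d=\delta$. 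These are precisely the ingredients supplied by \cite{bellamy2021symplectic} (building on Crawley--Boevey's local normal form), so your proposal would in the end cite the same source the paper cites, just one level deeper.

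One small point to watch in the converse direction: to know that a tuple of the form \eqref{eq: an isot decomp} is a genuine representation type you need each $\beta_j\in\Sigma_0$, not merely imaginary; this is part of what \cite{bellamy2021symplectic} establishes and is not automatic from $q_{\underline{Q}}(d)=0$ alone.
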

	The following proposition is a consequence of \Cref{BS isotropic decomposition} and  \cite[Proposition 3.4]{wu2023namikawa}. 
	\begin{proposition}\label{prop: generator of NWi}
		Suppose $\tilde{v}\in \Sigma_0$. Let $\L_i$ be the codimension 2 leaf of $\M_0^0(v,w)$ corresponding to the representation type $\tau = (\beta_1,1;\cdots;\alpha_{i_t},m_t)$ as in \eqref{eq: an isot decomp}, and the coefficient of $\alpha_\infty$ in $\beta_1$ is 1.
		\begin{enumerate}
			\item The simple roots of $\hat\cartanh_i$ are in bijection with $\beta_2,\cdots,\alpha_{i_t}$. 
			\item The simple root in $\hat\cartanh_i$ corresponding to $\beta_i$ and $\beta_j$ lie in the same $\pi_1(\L_i)$-orbit if and only if $\beta_i = \beta_j$. 
			\item The simple roots in the folded root space $\cartanh_i$, as well as the generators of $W_i$, are in bijection with the \textbf{distinct} elements of $\beta_2,\cdots,\alpha_{i_t}$.
		\end{enumerate}
	\end{proposition}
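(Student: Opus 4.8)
The plan is to assemble the statement from \Cref{BS isotropic decomposition} and \cite[Proposition 3.4]{wu2023namikawa}; the substance of the argument is bookkeeping with the affine diagram $\underline{Q}$ attached to $\tau$, and everything genuinely new is contained in those two inputs.

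First I would set up the local model. By \Cref{BS isotropic decomposition}, the formal slice $S_i$ to $\L_i$ is the Kleinian singularity of the affine type carried by the quiver $\underline{Q}$ on the vertex set $\{\beta_1,\dots,\beta_s,\alpha_{i_1},\dots,\alpha_{i_t}\}$, with $-(u,u')$ arrows between $u$ and $u'$ and with minimal imaginary root $(1,\dots,1;m_1,\dots,m_t)$. Hence its minimal resolution $\tilde{S}_i$ satisfies $H^2(\tilde{S}_i,\C)\cong\hat\cartanh_i$, the finite-type root space obtained from $\underline{Q}$ by deleting one vertex whose coefficient in the minimal imaginary root is $1$; the candidates are exactly the $\beta$-vertices. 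Since $\tilde v_\infty = 1$ and none of the $\alpha_{i_j}$ involves the vertex $\infty$, exactly one of $\beta_1,\dots,\beta_s$ has $\alpha_\infty$-coefficient equal to $1$ — this is the $\beta_1$ of the statement — and the rest have $\alpha_\infty$-coefficient $0$. What \cite[Proposition 3.4]{wu2023namikawa} contributes is that, under the canonical identification $\hat\cartanh_i\cong H^2(\tilde{S}_i,\C)$ of \Cref{remark: projection from H2 to cartanh i} and Crawley-Boevey's framed/unframed dictionary, the deleted vertex is precisely $\beta_1$; this is (1).

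For (2) and (3) I would quote from the same reference the description of the $\pi_1(\L_i)$-action on the simple roots of $\hat\cartanh_i$ (which a priori is by Dynkin-diagram automorphisms): the orbit of the simple root attached to $\beta_j$ (for $2\le j\le s$) consists exactly of the simple roots attached to those $\beta_{j'}$ with $\beta_{j'}=\beta_j$, and each simple root attached to an $\alpha_{i_j}$ is fixed. (Intuitively, monodromy around $\L_\tau$ permutes isomorphic irreducible summands, and in the isotropic/affine regime two of the $\beta$-summands are interchangeable by the slice precisely when their dimension vectors agree; a real simple-root summand gives a single fixed vertex.) Since the $\alpha_{i_j}$ are pairwise distinct real roots, and hence never equal to an imaginary $\beta_{j'}$, the $\pi_1(\L_i)$-orbits on the simple roots of $\hat\cartanh_i$ are in bijection with the distinct elements of the list $\beta_2,\dots,\beta_s,\alpha_{i_1},\dots,\alpha_{i_t}$; this is (2). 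Finally, folding replaces the simple roots by their $\pi_1(\L_i)$-orbits, so that the simple roots of $\cartanh_i$ and the Coxeter generators of $W_i=(\widehat{W}_i)^{\pi_1(\L_i)}$ are indexed by these orbits, which yields (3).

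The one real obstacle is the first step — matching the deleted abstract vertex of $\underline{Q}$ with the summand $\beta_1$ carrying the framing, and correctly transporting the monodromy action through Crawley-Boevey's identification — but both are exactly the content of \cite[Proposition 3.4]{wu2023namikawa}, so once that proposition is invoked the proof is a short assembly; the remaining verifications (distinctness of the $\alpha_{i_j}$, real versus imaginary roots, the count of affine nodes) are immediate.
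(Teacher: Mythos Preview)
Your proposal is correct and takes essentially the same approach as the paper: the paper does not give a standalone proof but simply states that the proposition is a consequence of \Cref{BS isotropic decomposition} and \cite[Proposition 3.4]{wu2023namikawa}, and your write-up spells out exactly how those two inputs assemble into parts (1)--(3).
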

	\begin{proposition}\label{prop: simple real cod 2 root}
		Suppose $\tilde{v}\in \Sigma_0$. Let $i\in Q_0^\infty$. The following are equivalent. 
		\begin{enumerate}
			\item there is a codimension 2 leaf of $\M_0^0(v,w)$ whose representation type contains $\alpha_i$ as a component;
			\item $(\alpha_i,\tilde{v}) = 0$. 
		\end{enumerate}
		Such a leaf is unique if it exists. 
	\end{proposition}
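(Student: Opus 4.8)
Here the plan is to read both implications off the classification of codimension $2$ leaves in \Cref{BS isotropic decomposition}, together with the dimension formula \Cref{CB strata dimension formula} and the description of $\Sigma_0$ in \Cref{CB simple dimension criteria}. As $\M^0_0(v,w)\cong\M^0_0(\tilde v)$, I work with the framing‑free model, and I use that $\tilde v$ has full support, so that $\tilde v_i\ge 1$ (necessary for $\alpha_i$ to be a component of the representation type of any leaf). Since $\tilde v\in\Sigma_0\subseteq\F_{Q^\infty}$ and no positive real root can lie in the fundamental region — a nonnegative combination of the $\alpha_j$ pairing $\le 0$ with every $\alpha_j$ has self‑pairing $\le 0$, whereas real roots have self‑pairing $2$ — the root $\tilde v$ is imaginary, and \Cref{BS isotropic decomposition} applies. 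Also recall $\dim\M^0_0(\tilde v)=2p(\tilde v)$, since the open stratum of $\M^0_0(\tilde v)$ has representation type $(\tilde v,1)$.

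For $(1)\Rightarrow(2)$: let $\L$ be a codimension $2$ leaf whose representation type $\tau=(\beta_1,1;\dots;\beta_s,1;\alpha_{i_1},m_1;\dots;\alpha_{i_t},m_t)$ contains $\alpha_i$, say $\alpha_i=\alpha_{i_1}$ (this occurrence is unique, the $\alpha_{i_b}$ being distinct real roots and the $\beta_a$ imaginary). By \Cref{BS isotropic decomposition} the associated quiver $\underline Q$ is of affine type and $\delta:=(1,\dots,1;m_1,\dots,m_t)$ is its minimal positive imaginary root, hence $\delta$ lies in the radical of the Tits form of $\underline Q$: $(\delta,e_u)_{\underline Q}=0$ for every vertex $u$. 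Take $u$ to be the vertex corresponding to $\alpha_{i_1}=\alpha_i$. Since $\underline Q$ has no edge loops, its diagonal Cartan entry at $u$ is $2=(\alpha_i,\alpha_i)$ — here we use that $\alpha_i$ is a real root — and for $u'\ne u$ corresponding to a component $x\in\{\beta_a\}\cup\{\alpha_{i_b}\}$, the $(u,u')$ Cartan entry is $-(\text{number of arrows})=(\alpha_i,x)$ by the definition of $\underline Q$. Summing over $u'$,
\[
0=(\delta,e_u)_{\underline Q}=\Big(\alpha_i,\ \textstyle\sum_a\beta_a+\sum_b m_b\alpha_{i_b}\Big)=(\alpha_i,\tilde v),
\]
which is $(2)$.

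For $(2)\Rightarrow(1)$ and uniqueness: assume $(\alpha_i,\tilde v)=0$ and put $w:=\tilde v-\alpha_i\in\ZZ_{\ge0}^{Q_0^\infty}$. Expanding and using $(\alpha_i,\tilde v)=0$, $(\alpha_i,\alpha_i)=2$ gives $(w,w)=(\tilde v,\tilde v)+2$, hence $p(w)=p(\tilde v)-1$. Now $\dim\M^0_0(w)$ is the maximum of $d(\tau')$ over representation types $\tau'$ of $w$, which by \Cref{CB strata dimension formula} is $2$ times the maximum of $\sum_l p(\beta^l)$ over decompositions $w=\sum_l\beta^l$ with $\beta^l\in\Sigma_0$; for any such decomposition, $\tilde v=\alpha_i+\sum_l\beta^l$ is a decomposition of $\tilde v$ into at least two elements of $\Sigma_0$ (the simple root $\alpha_i$ lies in $\Sigma_0$), so the criterion of \Cref{CB simple dimension criteria} forces $p(\tilde v)>p(\alpha_i)+\sum_l p(\beta^l)=\sum_l p(\beta^l)$, whence $\dim\M^0_0(w)\le 2(p(\tilde v)-1)=\dim\M^0_0(\tilde v)-2$. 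I claim this is an equality. Granting it, let $S_i$ be the unique simple representation of dimension $\alpha_i$ and let $Z\subseteq\M^0_0(\tilde v)$ be the set of points whose underlying semisimple representation has $S_i$ as a direct summand. Then $Z$ is the image of the finite morphism $\M^0_0(w)\to\M^0_0(\tilde v)$, $r\mapsto r\oplus S_i$, so it is closed, irreducible, and of dimension $\dim\M^0_0(\tilde v)-2$; moreover $Z$ is precisely the union of those symplectic leaves whose representation type has $\alpha_i$ as a component. Hence the dense leaf of $Z$ is a codimension $2$ leaf with $\alpha_i$ among the components of its representation type, proving $(1)$, and by \Cref{BS isotropic decomposition} this leaf automatically has the isotropic‑decomposition form. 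For uniqueness: any codimension $2$ leaf with $\alpha_i$ a component of its representation type is contained in the irreducible set $Z$ and has dimension $\dim Z$, so it must be the dense leaf of $Z$.

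The one point I leave open, and which I expect to be the main obstacle, is the claimed equality $\dim\M^0_0(\tilde v-\alpha_i)=\dim\M^0_0(\tilde v)-2$, i.e.\ the reverse of the inequality proved above. I anticipate proving it in one of two ways. First, via Crawley–Boevey's étale slice theorem: the transverse slice to $\M^0_0(\tilde v)$ along the leaf of representations with $S_i$ a summand and a fixed generic complement of dimension $w$ is again a quiver variety, whose slice quiver acquires exactly one $A_1$‑node from the pairing $(\tilde v-\alpha_i,\alpha_i)=-2$ and nothing from the generic complement, so the leaf has codimension $2$. Second, via reflection functors, which preserve both $\dim\M^0_0(\cdot)$ and $p(\cdot)$: one reflects $\tilde v-\alpha_i$ into the fundamental region and checks that the resulting dimension vector lies in $\Sigma_0$, so that $\dim\M^0_0=2p$ there by definition of $\Sigma_0$. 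In either approach the real content is that the hypotheses $\tilde v\in\Sigma_0$ and $(\alpha_i,\tilde v)=0$ prevent $\tilde v-\alpha_i$ from being \emph{deficient} — for instance a proper multiple of an isotropic imaginary root — which would otherwise push the codimension strictly above $2$.
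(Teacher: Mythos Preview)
Your argument for $(1)\Rightarrow(2)$ is correct and is a fleshed-out version of the paper's one-line appeal to \Cref{BS isotropic decomposition}; nothing to add there.

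For $(2)\Rightarrow(1)$ your setup is exactly the paper's: compute $p(\tilde v-\alpha_i)=p(\tilde v)-1$, and use $\tilde v\in\Sigma_0$ to bound $\dim\M_0^0(\tilde v-\alpha_i)$ from above by $2p(\tilde v)-2$. The gap you flag---the matching lower bound---is not the obstacle you think. It is the elementary fact that \emph{every} dimension vector $u$ admits a decomposition into elements of $\Sigma_0$ with $\sum_k p(\beta_k)\ge p(u)$: if $u\in\Sigma_0$ this is trivial; if $u$ is a positive root not in $\Sigma_0$, the defining failure of \Cref{CB simple dimension criteria} hands you such a decomposition into fewer roots, and you iterate; if $u$ has disconnected support you split first (which only increases $\sum p$). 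The paper invokes this in one line (``By construction $\sum_k p(\beta_k)\ge p(\tilde v-\alpha_i)$'') and then sandwiches to get equality. Your proposed detours through the \'etale slice or reflection functors would work but are heavier than needed.

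Your worry about ``deficient'' $\tilde v-\alpha_i$ (e.g.\ a multiple $n\delta$ of an isotropic root) is pointed in the wrong direction: in that case $\dim\M_0^0(n\delta)=2n$ while $2p(n\delta)=2$, so the lower bound $\dim\ge 2p$ holds trivially; it is the \emph{upper} bound that would fail, and this is exactly what the hypothesis $\tilde v\in\Sigma_0$ forecloses.

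Your uniqueness argument via the irreducible locus $Z$ is genuinely different from the paper's, which simply cites \cite[Corollary~6.6]{wu2023namikawa}. Your route is self-contained and pleasant, but you should justify that $\M_0^0(\tilde v-\alpha_i)$ is irreducible (e.g.\ via the canonical decomposition of \Cref{CB canonical decomposition}, which exhibits it as a product of symmetric powers of irreducible varieties).
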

	\begin{proof}
		(1) $\implies$ (2) follows from (2) and (3) of \Cref{BS isotropic decomposition}. 
		Let us prove the other direction. 
		By the assumption $(\alpha_i,\tilde{v}) = 0$ it follows easily that 
		\begin{equation}\label{eq: dim of v-a is 2 less}
			2p(\alpha_i) + 2p(\tilde{v} -  \alpha_i) = 2p(\tilde{v} -  \alpha_i) = 2p(\tilde{v}) -2.
		\end{equation}
		If $\tilde{v}-  \alpha_i\in\Sigma_0$ then the representation type $(\tilde{v}- v_i\alpha_i,1;\alpha_i,v_i)$ corresponds to a codimension 2 leaf, by \Cref{CB strata dimension formula}. Otherwise, find a decomposition  
		\begin{equation}
			\tilde{v} - \alpha_i = c\alpha_i + \sum_{k=1}^l m_k\beta_k
		\end{equation}
		such that 
		\begin{enumerate}
			\item all $\beta_k \in \Sigma_0$, $\beta_k\neq \alpha_i$, and 
			\item $\sum_k p(\beta_k)$ is maximal among all such decompositions.
		\end{enumerate}
		We observe that if $(\beta_k,\beta_k) < 0$ then $p(n\beta_k) >n p(\beta_k)$ for $n\ge 2$, and if $(\beta_k,\beta_k) = 0$ then $np(\beta_k) > p(\beta_k)$ for $n\ge 2$. We conclude that $m_k\ge 2$ only if $\beta_k$ is a simple real root. 
		
		By construction $\sum_k p(\beta_k) \ge p(\tilde{v} - \alpha_i)$, and from the assumption $\tilde{v}\in \Sigma_0$ that $\sum_k p(\beta_k) + p(\alpha_i) <  p(\tilde{v} )$. 
		Note that $p(\beta_k)= 0$ when $\beta_k$ is a real root. 
		We conclude by \eqref{eq: dim of v-a is 2 less} that $(\beta_1,m_1;\cdots;\beta_l,m_l;\alpha_i,c+1)$ is a representation type that corresponds to a codimension 2 leaf. 
		
		Uniqueness of such a codimension 2 leaf follows from \cite[Corollary 6.6]{wu2023namikawa}. 
	\end{proof}
	\begin{proposition}\label{prop:a real root only appears in one leaf}
		Suppose $\tau_1 = (\beta_1,1; \cdots;\beta_s,1; \alpha_{i_1},m_1;\cdots;\alpha_{i_t},m_t)$ and 
		$\tau' = (\beta'_1,1 ;\cdots;\beta'_{s'},1; \alpha_{i_1'},m_1;\cdots;\alpha_{i_r'},m_r)$ are the representation types of two distinct codimension 2 leaves. 
		Then for any $\alpha_{i_j}$ and any $k$, we have 
		$(\alpha_{i_j}, \alpha_{i_k'}) = 0$ and $(\alpha_{i_j},\beta_k')=0$. 
	\end{proposition}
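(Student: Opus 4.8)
The plan is to reduce everything to \Cref{prop: simple real cod 2 root} together with the standard sign property of the Tits form on dimension vectors of simple modules over the preprojective algebra $\Pi:=\Pi(Q^\infty)$ (so that $\Pi$-modules are exactly the representations of $\overline{Q^\infty}$ lying in $\mu\inverse(0)$). Write the representation type of $\L'$ as $\tau'=(\beta'_1,1;\dots;\beta'_{s'},1;\alpha_{i'_1},m'_1;\dots;\alpha_{i'_r},m'_r)$, let $r'_k$ denote the simple $\Pi$-module with $\dim r'_k=\beta'_k$, and fix a real-root component $\alpha_{i_j}$ of $\tau_1$.

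First I would use the uniqueness clause of \Cref{prop: simple real cod 2 root}: since $\alpha_{i_j}$ is a component of $\tau_1$, that proposition gives $(\alpha_{i_j},\tilde v)=0$ and shows that $\L_1$ is the \emph{only} codimension $2$ leaf whose representation type involves $\alpha_{i_j}$. As $\L'\neq\L_1$, the root $\alpha_{i_j}$ is not a component of $\tau'$, so the vertex $i_j$ differs from each $i'_k$ and hence $(\alpha_{i_j},\alpha_{i'_k})=c_{i_j i'_k}\le 0$ for every $k$. Next I would check that $(\alpha_{i_j},\beta'_k)\le 0$ for every $k$ as well. Here $\alpha_{i_j}$ is the dimension vector of the simple $\Pi$-module $S_{i_j}$ supported at the vertex $i_j$, and $S_{i_j}\not\cong r'_k$ because $\alpha_{i_j}$ is real while $\beta'_k$ is imaginary; by the standard identity $\dim\operatorname{Ext}^1_\Pi(M,N)=\dim\Hom_\Pi(M,N)+\dim\Hom_\Pi(N,M)-(\dim M,\dim N)$ for finite-dimensional $\Pi$-modules (see e.g.\ \cite{crawley2001geometry}) together with Schur's lemma, this yields $(\alpha_{i_j},\beta'_k)=-\dim\operatorname{Ext}^1_\Pi(S_{i_j},r'_k)\le 0$.

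Finally I would combine these estimates. Since $\tau'$ is the representation type of a point of $\M_0^0(v,w)$ we have $\sum_k\beta'_k+\sum_k m'_k\alpha_{i'_k}=\tilde v$ with each $m'_k\ge 1$, so pairing with $\alpha_{i_j}$ gives
\[
0=(\alpha_{i_j},\tilde v)=\sum_k(\alpha_{i_j},\beta'_k)+\sum_k m'_k\,(\alpha_{i_j},\alpha_{i'_k}),
\]
a sum of non-positive numbers equal to $0$; hence every summand vanishes, forcing $(\alpha_{i_j},\beta'_k)=0$ for all $k$ and, since $m'_k>0$, also $(\alpha_{i_j},\alpha_{i'_k})=0$ for all $k$. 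The only non-formal ingredient is the inequality $(\alpha_{i_j},\beta'_k)\le 0$, and the point to watch is that it genuinely requires $S_{i_j}$ and $r'_k$ to be non-isomorphic; this is precisely why the argument must begin from the uniqueness statement in \Cref{prop: simple real cod 2 root}, which simultaneously separates the real-root components of $\tau_1$ and $\tau'$ and supplies the identity $(\alpha_{i_j},\tilde v)=0$ needed to make the displayed sum collapse.
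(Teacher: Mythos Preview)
Your argument is correct and follows the same overall skeleton as the paper's: use the uniqueness clause of \Cref{prop: simple real cod 2 root} to separate $\alpha_{i_j}$ from the real-root components of $\tau'$ and to obtain $(\alpha_{i_j},\tilde v)=0$, then exploit that $0=(\alpha_{i_j},\tilde v)$ is a sum of non-positive terms. The one substantive difference is in how the inequality $(\alpha_{i_j},\beta'_k)\le 0$ is established. You invoke the Hom--Ext identity for preprojective-algebra modules (Lemma~1 of \cite{crawley2001geometry}) together with Schur's lemma applied to the non-isomorphic simples $S_{i_j}$ and $r'_k$, which gives the inequality directly. The paper instead argues by contradiction: if some pairing were nonzero then some $(\alpha_{i_1},\beta'_\ell)$ would be strictly positive, forcing $i_1\in\Supp\beta'_\ell$, and then appeals to \cite[Theorem~5.6]{crawley2001geometry} (which gives $(\beta'_\ell-\alpha_{i_1},\alpha_{i_1})\le -2$ for $\beta'_\ell\in\Sigma_0$) to reach a contradiction. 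Your route is a bit more streamlined and avoids the case analysis; the paper's route has the mild advantage of staying purely at the level of root combinatorics without needing to realize $\beta'_k$ as the dimension vector of an actual simple $\Pi$-module.
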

	\begin{proof}
		Without loss of generality let $j=1$. By  \Cref{prop: simple real cod 2 root}, $(\tilde{v},\alpha_{i_1})=0$, and all $\alpha_{i_k'} \neq \alpha_{i_1}$, so $(\alpha_{i_j},\alpha_{i_k'})\le 0$. Therefore, if $(\alpha_{i_1},\alpha_{i_k'})<0$ for some $k$, or $(\alpha_{i_1},\beta_k')\neq 0$ for some $k$, then 
		$(\alpha_{i_1},\beta_\ell') >0$ for some $\ell$. In particular, $i_1\in \Supp \beta_\ell'$. But by \cite[Theorem 5.6]{crawley2001geometry}, $(\beta_\ell'-\alpha_{i_1},\alpha_{i_1})\le -2$, contradicting that $(\alpha_{i_1},\beta_\ell') >0$. 
	\end{proof}
	
	\subsection{The canonical decomposition}
	Consider the quiver variety $\M_0^0(v,w) = \M_0^0(\tilde v)$. 
	We still assume $\tilde{v} \in \F_{Q^\infty}$. By \cite[Theorem 1.1]{su2006flatness}, the moment map $\mu$ is flat in this case, so $\M_0(v,w) = \M_0^0(v,w)$ by \cite[Proposition 2.5]{bezrukavnikov2021etingof}, and $\NamiWeyl(v,w)$ is just the Namikawa-Weyl group of $\M_0^0(v,w)$. We recall from \cite{cb2000decomposition} Crawley-Boevey's canonical decomposition of affine quiver varieties in the form we need. 
	\begin{theorem}[{\cite[Theorem 1.1, Proposition 1.2]{cb2000decomposition}}]\label{CB canonical decomposition}
		There exists a decomposition $\tilde v = \sum_{i=1}^k m_iv_i$, where $m_i$ are positive integers and $v_i \in \Sigma_0$, such that any decomposition of $v$ as a sum of elements in $\Sigma_0$ is a refinement of this decomposition. It is called the \emph{canonical decomposition} of $v$.
		We have 
		\begin{equation}
			\M_0^0( \tilde v) = \prod_{i=1}^k S^{m_i} \M^0_0(v_i),
		\end{equation}
		which we call the \emph{canonical decomposition} of $\M_0^0(\tilde v)$. Here $S^{m_i}$ denotes the symmetric power. 
		Moreover, the following are true. 
		\begin{enumerate}
			\item If $v_i$ is a real root then it is simple and $\M_0^0(v_i)$ is a point. 
			\item If $v_i$ is an isotropic imaginary root, then $v_i $ is the minimal imaginary root of some affine type quiver. We will write $v_i=\delta_i$ in this case, and $\M_0^0(m_i\delta_i) = S^{m_i}\M_0^0(\delta_i)$. 
			\item  If $v_i$ is a non-isotropic imaginary root, then $m_i=1$, and  $n v_i \in \Sigma_0$ for any $n\ge 1$. 
		\end{enumerate}
	\end{theorem}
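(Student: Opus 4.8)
The plan is to assemble the statement from the results of Crawley-Boevey's two papers rather than reprove anything from scratch, and to read off the three enumerated consequences from the structure theory of roots already recalled above. First I would invoke \cite[Theorem 1.1]{cb2000decomposition}: for any dimension vector $\tilde v$ with $\tilde v \cdot \tilde\lambda = 0$ (here $\tilde\lambda = 0$, so the condition is vacuous) there is a decomposition $\tilde v = \sum m_i v_i$ into elements of $\Sigma_0$ that is \emph{canonical}, in the sense that every other such decomposition refines it; uniqueness of the canonical decomposition is part of that statement. For the geometric factorization $\M_0^0(\tilde v) = \prod_i S^{m_i}\M_0^0(v_i)$, I would cite \cite[Proposition 1.2]{cb2000decomposition} (together with the identification of affine quiver varieties with Crawley-Boevey's spaces of semisimple representations, via $\M_0^0(v,w)\cong\M_0^0(\tilde v)$ from \Cref{subsection:quiver varieties intro}); the point is simply that a generic semisimple representation of dimension $\tilde v$ decomposes, according to the canonical decomposition, into irreducibles whose dimension vectors are the $v_i$, each appearing with multiplicity that varies over the symmetric power $S^{m_i}\M_0^0(v_i)$, and these multiplicities are independent across distinct $i$ by the defining refinement property.

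For part (1): if $v_i$ is a real root in $\Sigma_0$, then by \Cref{CB simple dimension criteria} there is a simple $Q^\infty$-representation of dimension $v_i$ with $p(v_i) = 1-\tfrac12(v_i,v_i) = 0$; by \Cref{CB strata dimension formula} the open leaf of $\M_0^0(v_i)$ has dimension $2p(v_i) = 0$, so $\M_0^0(v_i)$ is a point. That $v_i$ must be a \emph{simple} real root when $\tilde v\in\F_{Q^\infty}$ follows because any non-simple positive real root $\alpha$ satisfies $(\alpha,\alpha_j) > 0$ for some $j$, which together with the fundamental-domain hypothesis on $\tilde v$ and the refinement property forces the non-simple root to split further — contradicting its membership in the canonical decomposition; I would spell this out using \cite[Theorem 5.8]{crawley2001geometry} (or the analogue cited above) characterizing which roots lie in $\Sigma_0$. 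For part (2): an isotropic imaginary root has $(v_i,v_i) = 0$, and by the classification of isotropic roots in the fundamental domain (every isotropic root is a $W$-translate, hence here an honest multiple-free copy, of the minimal imaginary root $\delta$ of an affine subquiver — see \Cref{BS isotropic decomposition} and \cite[Proposition 3.4]{wu2023namikawa}), $v_i = \delta_i$ is the minimal positive imaginary root of an affine type quiver; since $n\delta_i\in\Sigma_0$ for all $n\ge 1$ as well, the decomposition $m_i\delta_i$ is the canonical one and $\M_0^0(m_i\delta_i) = S^{m_i}\M_0^0(\delta_i)$ is immediate from the general product formula. For part (3): if $v_i$ is non-isotropic imaginary, i.e. $(v_i,v_i) \le -2$, then $p(nv_i) = 1 - \tfrac{n^2}{2}(v_i,v_i) > n\,p(v_i)$ for $n\ge 2$ (the inequality $1 - \tfrac{n^2}{2}q > n(1-\tfrac12 q)$ reduces to $(n-1)(1 + \tfrac{n}{2}(-q)) > 0$, which holds since $-q\ge 2$); combined with \Cref{CB simple dimension criteria} this shows $nv_i\in\Sigma_0$ for all $n\ge 1$ and, by the maximality property of the canonical decomposition, forces $m_i = 1$.

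The main obstacle I anticipate is not any single computation but the bookkeeping needed to match conventions: \cite{cb2000decomposition} works with the category of representations of the deformed preprojective algebra $\Pi^\lambda(Q^\infty)$ and its moduli of semisimple representations, whereas the theorem is phrased in terms of Nakajima's $\M_0^0(v,w)$; I would need to be careful that the identification $\M_0^0(v,w)\cong\M_0^0(\tilde v)$ (from \cite[Section 1, Remarks]{crawley2001geometry}) is compatible with the symmetric-power decomposition and with the notion of ``positive root of $Q^\infty$'' used in $\Sigma_0$. A secondary subtlety is that the cited results of Crawley-Boevey allow an arbitrary weight vector satisfying $\tilde\lambda\cdot v = 0$, and I am specializing to $\tilde\lambda = 0$; I should double-check that $\Sigma_0$ (the $\tilde\lambda = 0$ case of $\Sigma_{\tilde\lambda}$) is exactly the indexing set appearing in the canonical decomposition, which it is, but this deserves an explicit sentence. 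Everything else is a direct appeal to the already-recalled \Cref{CB simple dimension criteria}, \Cref{CB strata dimension formula}, and \Cref{BS isotropic decomposition}.
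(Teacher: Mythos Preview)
The paper does not supply a proof of this theorem; it is stated as a quotation from \cite{cb2000decomposition} (the citation is in the theorem header), and the subsequent text only recalls Crawley-Boevey's algorithm without argument. Your proposal therefore already goes beyond what the paper itself provides, and the overall strategy of assembling the statement from \cite[Theorem 1.1, Proposition 1.2]{cb2000decomposition} together with \Cref{CB simple dimension criteria} and \Cref{CB strata dimension formula} is the right one.

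However, your treatment of part (2) contains a genuine error. You assert that ``$n\delta_i\in\Sigma_0$ for all $n\ge 1$ as well'', but this is false for $n\ge 2$: since $\delta_i$ is isotropic, $p(n\delta_i)=1-\tfrac{n^2}{2}(\delta_i,\delta_i)=1$ for every $n$, whereas the decomposition $n\delta_i=\delta_i+\cdots+\delta_i$ has $\sum_{k=1}^n p(\delta_i)=n$, so the strict inequality in \Cref{CB simple dimension criteria} fails. It is precisely \emph{because} $n\delta_i\notin\Sigma_0$ for $n\ge 2$ that the canonical decomposition must record $m_i$ separate copies of $\delta_i$, and hence that $\M_0^0(m_i\delta_i)=S^{m_i}\M_0^0(\delta_i)$ via the product formula; your conclusion is correct but the justification is inverted. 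A smaller gap appears in part (3): to conclude $nv_i\in\Sigma_0$ via \Cref{CB simple dimension criteria} you must rule out \emph{every} decomposition of $nv_i$ into positive roots, not only the decomposition into $n$ copies of $v_i$. The single inequality $p(nv_i)>np(v_i)$ does not suffice for this; one needs the further argument in \cite{cb2000decomposition}.
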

	We are going to define the subgroup $W(v,w)_{\Irr}$ of $W(v,w)$ in \Cref{theorem:W vs NW}. First, we need to recall the algorithm of obtaining the canonical decomposition below, see \cite{cb2000decomposition}. 
	
	\begin{enumerate}
		\item If $\tilde{v} \in \Sigma_0$, we are done. 
		\item If not, then $Q_0^\infty$ is the disjoint union of two subsets, denoted $\mathcal{J},\mathcal{K}$, that satisfy the following properties. Let $v_\mathcal{J},v_\mathcal{K}$ be the restriction of $\tilde{v}$ to $\mathcal{J},\mathcal{K}$ respectively. Then
		$\mathcal{J},\mathcal{K}$ are connected by a single arrow, say $j\in \mathcal{J}$ is connected to $k\in \mathcal{K}$, 
		and one of the following holds:
		\begin{enumerate}
			\item $\tilde{v}_j = \tilde{v}_k = 1$; 
			\item $\mathcal{J}$ is an affine type quiver, $j$ is the extended vertex, $v_\mathcal{J} = m\delta$ where $\delta$ is the minimal imaginary root, $m\ge 2$, and $\tilde{v}_k = 1$. 
		\end{enumerate}
		\item For case (a) above, $\M_0^0(\tilde{v}) = \M_0^0(v_\mathcal{J}) \times \M_0^0(v_\mathcal{K})$. For (b), $\M_0^0(\tilde{v}) = S^m\M_0^0(\delta) \times \M_0^0(v_\mathcal{K})$. 
		\item The vector $v_\mathcal{K}$ either lies in $\F_{Q^\infty}$, or takes the following form 
		\begin{equation}    
			\begin{tikzpicture}[scale=0.9]
				\filldraw[black] (0,0) circle (1.5pt);
				\filldraw[black] (1,0) circle (1.5pt);
				\filldraw[black] (2,0) circle (1.5pt);
				\filldraw[black] (3,0) circle (1.5pt);
				\filldraw[black] (4,0) circle (1.5pt);
				\draw[-,thick] (.1,0)--(.9,0);
				%				\draw[-,thick] (.1,0.1)--(.9,0.1);
				%				\draw[-,thick] (.1,-0.1)--(.9,-0.1);
				\draw[-,thick] (1.1,0)--(1.9,0);
				%				\draw[-,thick] (2.1,0)--(2.9,0);
				\draw[-,thick] (3.1,0)--(3.9,0);

				\node at (2.5,0) {$\cdots$};	
				\node at (4.7,0)  {$v_\mathcal{K}'$};
				\node at (0,-0.4) {$\alpha_k$};	
				\node at (1,-0.4) {$\alpha_{k_1}$};
				\node at (2,-0.4) {$\alpha_{k_2}$};
				\node at (3,-0.4) {$\alpha_{k_r}$};
				\node at (4,-0.4) {$\alpha_{k_{r+1}}$};
				\node at (0,0.4) {$1$};	
				\node at (1,0.4) {$1$};
				\node at (2,0.4) {$1$};
				\node at (3,0.4) {$1$};
				\node at (4,0.4) {$1$};
				\draw (4.7,0) circle (1.2);
			\end{tikzpicture}
		\end{equation}
		where $(\alpha_{k_m} , v_\mathcal{K}) = 0$ for $1\le m \le r$ and $(\alpha_{k_{r+1}} , v_\mathcal{K}) \le -1$. Then the dimension vector 
		\begin{equation}
			v'_\mathcal{K} := v_\mathcal{K} - \alpha_k - \sum_{m=1}^r \alpha_{k_m} \in \F_{Q^\infty}
		\end{equation}  and $\M_0^0(v_\mathcal{K}) = \M_0^0(v'_\mathcal{K}) $. Similarly, we construct $v'_\mathcal{J}$ in case (a). 
		\item Now repeat the procedure starting from (1) with $v'_\mathcal{K}$ (and perhaps also $v'_\mathcal{J}$) instead of $\tilde{v}$. The procedure stops after finitely many iterations. 
	\end{enumerate}
	The following result follows directly from the construction above.
	\begin{proposition}\label{prop: decompose tildev into a string}
		Suppose $\tilde{v}\in\F_{Q^\infty}$. Then we can decompose $\tilde{v}$ as follows 
		\begin{equation}
			\begin{tikzpicture}[scale=1]
				\draw (0,0) circle (0.3);
				\node at (0,0) {$v_1$};
				\draw[-,thick] (.3,0)--(.6,0);
				\draw  (0.6,-0.2) rectangle ++(0.8,0.4); 
				\node at (1,0) {$\beta_1$};
				\draw[-,thick] (1.4,0)--(1.7,0);
				\draw (2,0) circle (0.3);
				\node at (2,0) {$v_2$};
				\draw[-,thick] (2.3,0)--(2.6,0);
				\draw  (2.6,-0.2) rectangle ++(0.8,0.4); 
				\node at (3,0) {$\beta_2$};
				\draw[-,thick] (3.4,0)--(3.7,0);
				\node at (4,0) {$\cdots$};
				\draw[-,thick] (4.3,0)--(4.6,0);
				\draw  (4.6,-0.2) rectangle ++(0.8,0.4); 
				\node at (5,0) {$\beta_{k-1}$};
				\draw[-,thick] (5.4,0)--(5.7,0);
				\draw (6,0) circle (0.3);
				\node at (6,0) {$v_k$};
			\end{tikzpicture}
		\end{equation}
		In the above picture, 
		\begin{enumerate}
			\item Each $v_i$ either lies in $\Sigma_0$ and is an imaginary root, or $v_i = m_i\delta_i$ such that $\delta_i$ is the minimal imaginary root of an affine type quiver.
			\item Each $\beta_i$ is either trivial (so that $v_i$ and $v_{i+1}$ are connected by a single arrow), or $\Supp \beta_i$ is a type $A$ quiver and $\beta_i$ is the longest root $(1,1,\cdots,1)$. 
		\end{enumerate}
		The canonical decomposition is $\M_0^0(v,w) = \prod_{i=1}^k \M_0^0(v_i) $, where $\M_0^0(v_i) = S^{m_i}\M_0^0(\delta_i)$ if  $v_i = m_i\delta$.  
	\end{proposition}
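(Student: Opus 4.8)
The plan is to read the string decomposition off the Crawley--Boevey algorithm (1)--(5) recalled above, formalizing ``follows directly from the construction'' as an induction on the number of iterations of that algorithm (equivalently, on $\sum_{i\in Q_0^\infty}\tilde v_i$, which strictly decreases each time a proper piece is passed to the next iteration). The invariant carried by the induction will be exactly the assertion of the proposition: an iterate $\tilde v\in\F_{Q^\infty}$ decomposes as a string $v_1-\beta_1-\cdots-\beta_{k-1}-v_k$ with each $v_i$ an imaginary root in $\Sigma_0$ or of the form $m_i\delta_i$, each $\beta_i$ trivial or a type-$A$ longest root $(1,\dots,1)$, and $\M_0^0(\tilde v)=\prod_i\M_0^0(v_i)$; I will additionally carry the refinement that each $\beta_i$ meets its two neighbouring vertices at single arrows, so that the configuration stays a path rather than a branching tree. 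Since \Cref{CB canonical decomposition} already supplies the canonical decomposition $\tilde v=\sum m_iv_i$ with $v_i\in\Sigma_0$ together with the product formula for $\M_0^0$, the only genuinely new content is this quiver-theoretic organization of the pieces.

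For the base case $\tilde v\in\Sigma_0$, I will note that $\tilde v$ is then a positive root with $(\tilde v,\tilde v)=\sum_i\tilde v_i(\tilde v,\alpha_i)\le 0$ because $\tilde v\in\F_{Q^\infty}$; hence $\tilde v$ is not a real root, so it is imaginary, and if isotropic it is the minimal imaginary root of the affine subquiver on $\Supp\tilde v$ by \Cref{CB canonical decomposition}. The string is then the single vertex $v_1=\tilde v$. For the inductive step I apply step (2), writing $Q_0^\infty=\mathcal J\sqcup\mathcal K$ joined by a single arrow $j\in\mathcal J$, $k\in\mathcal K$. In case (b) the affine piece $v_\mathcal J=m\delta$ with $m\ge 2$ becomes the leftmost string vertex; step (4) exhibits $v_\mathcal K$ as $v'_\mathcal K\in\F_{Q^\infty}$ with a type-$A$ chain of coefficient-$1$ simple real roots adjoined at $k$, this chain is the connector $\beta_1$ (trivial if no stripping was needed) attached to $v_\mathcal J$ along the arrow $j-k$, and the rest of the string comes from the inductively obtained string of $v'_\mathcal K$. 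In case (a), where $\tilde v_j=\tilde v_k=1$, step (4) similarly writes $v_\mathcal J,v_\mathcal K$ as $v'_\mathcal J,v'_\mathcal K\in\F_{Q^\infty}$ with type-$A$ chains of coefficient-$1$ simple real roots adjoined at $j$ and $k$ respectively; these two chains together with the arrow $j-k$ form a single type-$A$ subquiver with dimension vector $(1,\dots,1)$, which is the connector $\beta$ gluing the inductively obtained strings of $v'_\mathcal J$ and $v'_\mathcal K$ (when both chains are empty, $\beta$ is trivial and the gluing is directly along $j-k$). The product formula for $\M_0^0(v,w)$ is then assembled from the factorizations in steps (3)--(4), the inductive product formulas, and \Cref{CB canonical decomposition} for the symmetric powers attached to the isotropic pieces $m_i\delta_i$.

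The step I expect to require real care, as opposed to pure bookkeeping, is verifying that each stripped chain becomes an \emph{interior} connector and never dangles off an end of the final string, and, relatedly, that the string stays a path. For the first point it suffices to check that the residual $v'_\mathcal K$ of step (4) is always nonzero: in the comb case it still contains the vertex $\alpha_{k_{r+1}}$ of the remaining block, and if $v_\mathcal K\in\F_{Q^\infty}$ already then $v'_\mathcal K=v_\mathcal K$ supports $\alpha_k$ with coefficient $1$; by the inductive hypothesis this guarantees a genuine string vertex on each side of $\beta$. For the second point I will use that every split in step (2) is performed along a single arrow, so each iteration contributes exactly one new connector, glued to one endpoint of each of the two strings it joins; carrying ``the attaching vertex is a string endpoint'' as part of the inductive invariant keeps the whole configuration linear. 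The remaining verifications -- that connectors have all coefficients $1$ and support a type-$A$ quiver, and that no $v_i$ is a simple real root -- are immediate from the explicit shape of $v_\mathcal K$ in step (4) and from \Cref{CB canonical decomposition}.
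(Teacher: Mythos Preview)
Your approach—induction following the Crawley--Boevey algorithm (1)--(5)—is exactly what the paper means by ``follows directly from the construction above,'' and the bookkeeping (that each $v_i$ is imaginary, that connectors carry the dimension vector $(1,\dots,1)$ on a type-$A$ support, and that the product formula for $\M_0^0$ holds) is handled correctly.

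The gap is the linearity argument. Declaring ``the attaching vertex is a string endpoint'' as part of the inductive invariant is circular: when you recurse on $v'_{\mathcal K}$, the inner induction produces some string decomposition, but it has no knowledge of which vertex of $\Supp v'_{\mathcal K}$ will later serve as the attaching point for the outer connector---that vertex is dictated by the outer split, not by the inner recursion. In fact linearity can genuinely fail. Take three non-isotropic imaginary roots $A,B,C\in\Sigma_0$, each supported on a two-vertex quiver with three arrows and dimension vector $(1,1)$, and adjoin a central vertex $c$ with $\tilde v_c=1$ connected by a single arrow to one vertex of each of $A,B,C$. Then $\tilde v\in\F_{Q^\infty}$ (at $c$ one computes $(\alpha_c,\tilde v)=2-3=-1$), while $p(\tilde v)=6=p(A)+p(B)+p(C)+p(\alpha_c)$ so $\tilde v\notin\Sigma_0$; the canonical decomposition is $\tilde v=A+B+C+\alpha_c$. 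Any two of $A,B,C$ communicate only through $c$, so no arrangement $v_1-\beta_1-v_2-\beta_2-v_3$ with disjoint type-$A$ connectors is possible: the correct picture is a tree with three leaves meeting at $c$. The paper's subsequent definitions of $W(v,w)_{\Irr}$ and $W(v,w)_{\Ess}$, and the arguments in \Cref{sec: more general dimension vectors}, work verbatim for a tree, so this is an imprecision in the stated proposition rather than a real obstruction to the paper---but your proof as written purports to establish the path structure, and that step does not go through.
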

	
	\begin{definition}
		\begin{enumerate}
			\item Define the \textit{irrelevant subgroup} $W(v,w)_{\Irr}\subset W(v,w)$ to be the subgroup generated by 
			\begin{equation}\label{eq: generator of WvwIrr}
				\{s_j|j\in \Supp(\beta_r)\text{ for some $1 \le r \le k-1$, and } (\alpha_j,\tilde{v}) = 0\}. 
			\end{equation}
			\item Define the \textit{essential subgroup} $W(v,w)_{\Ess}\subset W(v,w)$ to be the subgroup generated by 
			\begin{equation}\label{eq:generators of WEss}
				Gen_{\Ess} = \{s_j|j\in \Supp(v_r)\text{ for some $1 \le r \le k$, and } (\alpha_j,\tilde{v}) = 0\}. 
			\end{equation}
		\end{enumerate}
	\end{definition}
	\begin{proof}[Proof of (1) of \Cref{theorem:W vs NW}]
		When $\tilde{v} \in \Sigma_0$ the canonical decomposition of $\M_0^0(v,w)$ contains no $\beta_r$ components, so $W(v,w)_{\Irr} = 1$ in this case. 
		In general, it is straightforward to check that the generators of $W(v,w)_{\Irr}$ and those of $W(v,w)_{\Ess}$ commute. By \Cref{prop:generators of Wvw}, there is a decomposition $W(v,w) = W(v,w)_{\Irr} \times W(v,w)_{\Ess}$.
	\end{proof}

	Now let us turn to $\NamiWeyl(v,w)$ and consider the codimension 2 leaves of $\M_0^0(v,w)$. If $X = \prod X_i$ is a finite product of symplectic singularities, then any codimension 2 leaf of $X$ takes the form $\L_j \times \prod_{i\neq j} X_i^{\reg}$, where $\L_j$ is a codimension 2 leaf of $X_j$. In view of the canonical decomposition of $\M_0^0(v,w)$, the following corollary is straightforward to check. 
	\begin{corollary}\label{cor: leaves rep types and NW under canonical decomp}
		Assume $\tilde{v} \in \F_{Q^\infty}$, and $\tilde v = \sum_{i=1}^k m_i v_i$ is the canonical decomposition of $\tilde v$, where $v_i$ is either a simple real root, or $v_i = m_i\delta_i$ for some affine minimal imaginary root $\delta$ and $m \ge 2$, or $v_i\in \Sigma_0$ is imaginary. (The notation is slightly different from that in \Cref{CB canonical decomposition}). Then 
		\begin{enumerate}
			\item $\M_0^0(v,w)$ has the following codimension 2 leaves. 
			\begin{enumerate}
				\item 
				%			Those coming from the component $\M_0^0(v_i)$ where $v_i\in \Sigma_0$ is imaginary. The representation types are as follows. 
				If $\tau_{i,j}$ is a representation type of a codimension 2 leaf $\L_{i,j}\subset \M_0^0(v_i)$, where $v_i\in\Sigma_0$, then $\M_0^0(v,w)$ has a codimension 2 leaf with representation type 
				\begin{equation}
					\tilde \tau_{i,j} = (\cdots, v_{i-1},1; (\tau_{i,j}); v_{i+1},1; \cdots \underbrace{\delta_k,1;\cdots,\delta_k,1}_{\textrm{$m_k$ copies, if $v_k = m_k\delta_k$}}; \cdots).
				\end{equation}
				\item 
				%		Those coming from $\M_0^0(\delta_i)$, 
				If $v_i = m_i\delta_i$ and $m_i>1$, $\displaystyle\delta_i = \sum_{j\in\Supp \delta_i} n_j\alpha_j$, then $\M_0^0(v,w)$ has a codimension 2 leaf with representation type 
				\begin{equation}
					\tilde{\tau}_{i} = (\cdots, v_{i-1},1; (\alpha_j,n_j); \underbrace{\delta_i,1;\cdots;\delta_i,1}_{\text{$m_i -1$ copies}}
					; v_{i+1},1; \cdots).
				\end{equation}
				\item 
				%	Those coming from $S^{m_i}( \M_0^0(\delta_i)^{\reg})$, where 
				If $v_i = m_i\delta_i$ and $m_i>1$, then  
				%	$\M_0^0(\delta_i)^{\reg}$ is the smooth locus of $\M_0^0(\delta_i)$. 
				%	The representation type of such a leaf is
				$\M_0^0(v,w)$ has a codimension 2 leaf with representation type 
				\begin{equation}
					\tilde{\tau}_{i,0} = (\cdots, v_{i-1},1; \underbrace{\delta_i,1;\cdots;\delta_i,1}_{\text{$m_i -2$ copies}};\delta_i,2
					; v_{i+1},1; \cdots).
				\end{equation}
			\end{enumerate}
			\item The group $\NamiWeyl(v,w)$ has the following decomposition: 
			\begin{equation}
				\NamiWeyl(v,w) = \prod_{i=1}^k \NamiWeyl(v_i)
			\end{equation}
			where $\NamiWeyl(v_i)$ denotes the Namikawa-Weyl group of $\M_0^0(v_i)$. If $v_i = m_i\delta_i,m_i\ge 2$, then 
			\begin{equation}
				\NamiWeyl(v_i) = W_i \times \ZZ/2\ZZ
			\end{equation} 
			where $W_i$ is the finite Weyl group of the same type as $\delta_i$. 
			\item The decomposition \Cref{Ivan's decomposition of cartan space} takes the form $H^2(\M_0^\theta(v,w),\C) =$ 
			\begin{equation}\label{eq: ivans decomposition under canonical decomposition}
				H^2(\M_0^0(v,w)^{\reg},\C) \oplus \bigoplus_{\substack{v_i\in\Sigma_0\\ 1\le j \le  n_j}}\cartanh_{i,j} \oplus 
				\bigoplus_{\substack{v_i =m_i\delta_i \\m_i>1}} \cartanh_i 
				\oplus
				\bigoplus_{\substack{v_i =m_i\delta_i \\m_i>1}}\C
			\end{equation}
			where $\L_{i,1},\cdots,\L_{i,n_i}$ are the codimension 2 leaves of $\M_0^0(v_i)$, 
			$\cartanh_{i,j}$ is associated to $\L_{i,j}$, and 
			$\cartanh_i$ is isomorphic to the finite type Cartan space of $\delta_i$. 
		\end{enumerate}
	\end{corollary}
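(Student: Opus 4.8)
The plan is to reduce everything to the canonical decomposition $\M_0^0(v,w)=\prod_{i=1}^k\M_0^0(v_i)$ of \Cref{prop: decompose tildev into a string} (where $\M_0^0(v_i)=S^{m_i}\M_0^0(\delta_i)$ when $v_i=m_i\delta_i$), combined with the elementary fact recalled just above: for a finite product $X=\prod X_i$ of conical symplectic singularities every codimension $2$ leaf has the form $\L_j\times\prod_{i\ne j}X_i^{\reg}$ for a codimension $2$ leaf $\L_j\subset X_j$, and the formal slice to such a leaf coincides with that to $\L_j$ in $X_j$ up to a smooth polydisc factor. Granting this, the Namikawa-Weyl group of a product is the product of those of the factors, and, applying the Künneth formula together with \Cref{Ivan's decomposition of cartan space} to each factor, the decomposition of \Cref{Ivan's decomposition of cartan space} for $H^2$ of the product is $H^2$ of the (product) regular locus plus the direct sum over the factors of their $\mf{h}$-summands. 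So it suffices to treat each $\M_0^0(v_i)$ separately; those factors with $v_i$ a (simple) real root are points and contribute nothing.

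For $v_i\in\Sigma_0$ imaginary, $\M_0^0(v_i)$ is a quiver variety to which \Cref{BS isotropic decomposition}, \Cref{prop: simple real cod 2 root} and \Cref{prop: generator of NWi} apply directly, supplying its codimension $2$ leaves, their representation types $\tau_{i,j}$, their slice Kleinian types and the folded Cartan spaces $\cartanh_{i,j}$. The only extra thing to check is how representation types transport across Crawley-Boevey's isomorphism $\M_0^0(\tilde v)\cong\prod_i\M_0^0(v_i)$: since it sends a tuple of points to the direct sum of the semisimple modules representing them, the representation type of $\L_{i,j}\times\prod_{\ell\ne i}\M_0^0(v_\ell)^{\reg}$ is the concatenation of $\tau_{i,j}$ with $(v_\ell,1)$ for each imaginary $v_\ell\in\Sigma_0$, $\ell\ne i$, and with $m_\ell$ copies of $(\delta_\ell,1)$ for each $v_\ell=m_\ell\delta_\ell$ (a generic point of $S^{m_\ell}\M_0^0(\delta_\ell)$ being a union of $m_\ell$ distinct smooth points). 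This gives the first family of leaves in part (1) and the $\cartanh_{i,j}$-summands.

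For $v_i=m_i\delta_i$ with $m_i\ge2$ the task is to analyze $S^{m_i}\M_0^0(\delta_i)$, where $\M_0^0(\delta_i)\cong\C^2/\Gamma_i$ is the Kleinian singularity whose minimal resolution has exceptional fibre of the finite $ADE$ type underlying $\delta_i$, with unique singular point represented by $\bigoplus_j\alpha_j^{\oplus n_j}$, $\delta_i=\sum_j n_j\alpha_j$. A point of $S^{m_i}(\C^2/\Gamma_i)$ is a degree-$m_i$ effective divisor; its symplectic leaf depends only on the multiset of multiplicities of the smooth points occurring in it and on whether, and with what multiplicity, the singular point occurs, and a short dimension count leaves exactly two codimension $2$ leaves: the locus where two smooth points have collided and the remaining $m_i-2$ are distinct (representation type $\tilde\tau_{i,0}$), and the locus where the singular point occurs once while the other $m_i-1$ smooth points are distinct (representation type $\tilde\tau_i$). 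Analytically-locally near the first, $S^{m_i}(\C^2/\Gamma_i)\cong S^2(\C^2)\times(\text{smooth})\cong\C^2\times\big(\C^2/(\ZZ/2\ZZ)\big)\times(\text{smooth})$, so the slice is the $A_1$ singularity; near the second it is $(\C^2/\Gamma_i)\times(\text{smooth})$, so the slice is $\C^2/\Gamma_i$, with Namikawa-Weyl group the finite Weyl group $W_i$. In both cases the fundamental group of the leaf acts trivially on the slice's Dynkin diagram (moving the other points does not disturb the local structure at the distinguished point), so no folding occurs, $\NamiWeyl(v_i)=W_i\times\ZZ/2\ZZ$, and the contributed summands are $\cartanh_i\oplus\C$. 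Summing over all $i$ yields the list of leaves in (1), and then (2) and (3) follow from the product decompositions in the first paragraph.

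I expect the only genuinely non-formal step to be the slice analysis of the symmetric power in the last paragraph: confirming that the ``singular-point'' leaf of $S^{m_i}\M_0^0(\delta_i)$ carries a \emph{finite}-type Kleinian slice rather than an affine-type one, that the collision leaf carries exactly an $A_1$ slice, and that there are no further codimension $2$ leaves. Everything else is bookkeeping on top of Crawley-Boevey's canonical decomposition and the structural results cited above.
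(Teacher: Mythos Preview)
Your proposal is correct and follows the same approach as the paper, which simply states that the corollary ``is straightforward to check'' given the canonical decomposition and the fact that codimension $2$ leaves of a product are products of a codimension $2$ leaf in one factor with the regular loci of the others. You have in fact filled in more detail than the paper provides, correctly isolating the symmetric-power analysis $S^{m_i}\M_0^0(\delta_i)$ (the two codimension $2$ leaves, their slice types, and the triviality of the monodromy) as the only substantive step; for this last point the paper implicitly relies on the known computation cited in \Cref{MN weyl group dynkin quiver} and the subsequent example.
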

	
	Next, we produce a map from the set $Gen_{\Ess}$ to $\prod_{i=1}^k \NamiWeyl(v_i) $. 
	For each $v_i$, it follows from the construction of the canonical decomposition that if $j \in \Supp(v_i)$ is connected to $\beta_{i-1}$ or $\beta_i$, then $(v_i)_{j} = 1$. Therefore, we pick one such vertex in each $\Supp v_i$ and view it as a framing in $\M_0^0(v_i)$. Denote this vertex by $i_\infty$. Exceptionally, if $\infty\in \Supp v_i$ then we pick $i_\infty = \infty$, even if it is not connected to $\beta_i$ or $\beta_{i-1}$. 
	\begin{lemma}
		\begin{enumerate}
			\item For each $1\le i \le k$,  $(\alpha_{i_\infty},\tilde{v}) < 0$ unless $i_\infty = \infty$; in particular, $s_{i_\infty}\not\in Gen_{\Ess}$. 
			\item If $j\in \Supp(v_i)$ and $s_j\in Gen_{\Ess}$, then there is a unique codimension 2 leaf of $\M_0^0(v_i)$ whose representation type contains $\alpha_j$ as a component. 
		\end{enumerate}
		
	\end{lemma}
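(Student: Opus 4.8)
The plan is to read everything off the string decomposition of $\tilde v$ in \Cref{prop: decompose tildev into a string}. From the algorithm producing the canonical decomposition, the supports $\Supp v_1,\Supp\beta_1,\dots,\Supp v_k$ are pairwise disjoint connected subsets of $Q_0^\infty$ whose union is $\Supp\tilde v$, and consecutive pieces of the string are joined by a single arrow. For part (1), assuming $i_\infty\neq\infty$, write $\tilde v=v_i+v_i'$ with $v_i':=\tilde v-v_i$, which has non-negative entries supported off $\Supp v_i$; by construction $i_\infty$ is a vertex of $\Supp v_i$ with $(v_i)_{i_\infty}=1$ joined by an arrow to some $m_0\notin\Supp v_i$ with $\tilde v_{m_0}\ge 1$, so $(v_i')_{m_0}\ge 1$. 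Then $(\alpha_{i_\infty},\tilde v)=(\alpha_{i_\infty},v_i)+(\alpha_{i_\infty},v_i')$, where the first summand is $\le 0$ (if $v_i\in\Sigma_0$ it lies in the fundamental region of the subquiver $\Supp v_i$; if $v_i=m_i\delta_i$ then $(\alpha_m,\delta_i)=0$ for every vertex $m$ of the affine quiver $\Supp\delta_i$) and the second is $-\sum_m n_{i_\infty m}(v_i')_m\le -(v_i')_{m_0}\le -1$ since $(v_i')_{i_\infty}=0$. Hence $(\alpha_{i_\infty},\tilde v)\le -1<0$; as $Gen_{\Ess}$ contains only $s_j$ with $j\in Q_0$ and $(\alpha_j,\tilde v)=0$, this gives $s_{i_\infty}\notin Gen_{\Ess}$ (the case $i_\infty=\infty$ being trivial, $s_\infty\notin W_Q$).

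For part (2), take $j\in\Supp v_i$ with $s_j\in Gen_{\Ess}$, so $(\alpha_j,\tilde v)=0$, and part (1) forces $j\neq i_\infty$. Writing $(\alpha_j,\tilde v)=(\alpha_j,v_i)+(\alpha_j,v_i')$ as a sum of two non-positive terms (the second since $(v_i')_j=0$) gives $(\alpha_j,v_i)=0$. If $v_i\in\Sigma_0$ is imaginary, I would invoke the Crawley--Boevey framing reduction of \cite[Section 1, Remarks]{crawley2001geometry}: because $(v_i)_{i_\infty}=1$, the variety $\M_0^0(v_i)$ is a framed quiver variety on the subquiver $\Supp v_i$ with framing at $i_\infty$; in this presentation $v_i$ still lies in $\Sigma_0$, $j$ is a genuine (non-framing) vertex, and $(\alpha_j,v_i)=0$, so \Cref{prop: simple real cod 2 root} yields a \emph{unique} codimension $2$ leaf of $\M_0^0(v_i)$ whose representation type has $\alpha_j$ as a component.

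The remaining case is $v_i=m_i\delta_i$ with $m_i\ge 2$, where \Cref{prop: simple real cod 2 root} is unavailable since $m_i\delta_i\notin\Sigma_0$. Here $\M_0^0(v_i)=S^{m_i}\M_0^0(\delta_i)$, and by \Cref{cor: leaves rep types and NW under canonical decomp} it has exactly two codimension $2$ leaves: the one in part (1)(b) of that corollary, whose representation type consists of the components $(\alpha_{j'},n_{j'})$ for $j'\in\Supp\delta_i$ (where $\delta_i=\sum_{j'}n_{j'}\alpha_{j'}$) together with $m_i-1$ copies of $(\delta_i,1)$, and the one in part (1)(c), with representation type $(\delta_i,2;\delta_i,1;\dots;\delta_i,1)$. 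The latter has no simple real root among its components, whereas the former has $\alpha_{j'}$ as a component for every $j'\in\Supp\delta_i=\Supp v_i$, in particular for $j'=j$. So the part-(1)(b) leaf is the unique codimension $2$ leaf of $\M_0^0(v_i)$ containing $\alpha_j$, which finishes the proof.

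The argument is largely bookkeeping with the string decomposition; the points requiring care are transporting the hypotheses of \Cref{prop: simple real cod 2 root} through the framing trick (one needs $v_i\in\Sigma_0$ of $\Supp v_i$, from \Cref{prop: decompose tildev into a string}; $i_\infty\neq j$, from part (1); and $(v_i)_{i_\infty}=1$, from the construction of $i_\infty$, making the reduction legitimate), and the fact that the case $v_i=m_i\delta_i$ is genuinely separate — its uniqueness statement rests on the explicit list of codimension $2$ leaves of a symmetric power of a Kleinian singularity in \Cref{cor: leaves rep types and NW under canonical decomp}, and could alternatively be quoted from \cite[Corollary 6.6]{wu2023namikawa}.
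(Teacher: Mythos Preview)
Your argument is correct and follows the same line as the paper: for (1) you split $(\alpha_{i_\infty},\tilde v)=(\alpha_{i_\infty},v_i)+(\alpha_{i_\infty},\tilde v-v_i)$ and observe that the first summand is $\le 0$ while the second is strictly negative; for (2) you deduce $(\alpha_j,v_i)=0$ and invoke \Cref{prop: simple real cod 2 root}. Your treatment is in fact more complete than the paper's terse ``(2) follows from (1) and \Cref{prop: simple real cod 2 root}'': that proposition is stated under the hypothesis that the dimension vector lies in $\Sigma_0$, so it does not literally cover the case $v_i=m_i\delta_i$ with $m_i\ge 2$, which you handle separately via the explicit list of codimension~$2$ leaves of $S^{m_i}\M_0^0(\delta_i)$ supplied by \Cref{cor: leaves rep types and NW under canonical decomp}.
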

	
	\begin{proof}
		If $i_\infty = \infty$ we are done since $s_\infty \not \in Gen_{\Ess}$ anyways. Otherwise, note that $v_i$ either lies in $\Sigma_0$ or $v_i = m\delta$, so $(\alpha_{i_\infty},v_i ) \le 0$. Since $i_\infty$ is connected to a vertex outside $\Supp v_i$, we have $(\alpha_{i_\infty},\tilde{v} ) < 0$. This proves (1). Now (2) follows from (1) and \Cref{prop: simple real cod 2 root}. 
	\end{proof}
	
	\begin{definition}\label{def: set theoretic iota}
		Let $j\in \Supp(v_i)$ and $s_j\in Gen_{\Ess}$. Define $\iota$ to be the map $Gen_{\Ess} \to \NamiWeyl(v_i)$ that maps $s_j$ to the generator of $\NamiWeyl(v_i)$ corresponding to $\alpha_j$, see \Cref{prop: generator of NWi}.  
	\end{definition}
	We will show that $\iota$ extends to a group monomorphism in \Cref{section:W action on H2}.
	\subsection{Examples}
	We give some examples that shed some light on the relation between $W(v,w)$ and $\NamiWeyl(v,w)$. 
	\begin{example}[{\cite[Theorem 5.4]{mcgerty2019springer}}]{\label{MN weyl group dynkin quiver}}
		Assume $Q$ is a simply-laced Dynkin quiver and $\tilde{v}\in\Sigma_0$. Then 
		$\NamiWeyl(v,w) = W(v,w)$. 
	\end{example}
	
	\begin{example}[{\cite[Proposition 2.2]{bellamy2020birational}}]
		Suppose $Q$ is a simply-laced  affine type quiver, and let $\delta$ be the minimal positive imaginary root. 
		Suppose $v = n\delta$ and $w$ is 1 over the extended vertex, 0 elsewhere. Then $\NamiWeyl(v,w) = W_{Q_{fin}} \times \ZZ/2\ZZ$ while $W(v,w) = W(v,w)_{\Ess} =  W_{Q_{fin}}$. 
		In this example, $\tilde{v}\in \F_{Q^\infty}$ but not $\Sigma_0$, and there is a natural embedding $W(v,w)\hookrightarrow \NamiWeyl(v,w)$. 
	\end{example}
	
	\begin{example}
		The following example is a variant of \cite[5(v)]{braverman2019ring}. 
		Consider the quiver below. 
		\begin{center}
			\begin{tikzpicture}[scale=0.9]
				\filldraw[black] (0,0) circle (1.5pt);
				\filldraw[black] (1,0) circle (1.5pt);
				\filldraw[black] (2,0) circle (1.5pt);
				\filldraw[black] (2.7,0.5) circle (1.5pt);
				\filldraw[black] (2.7,-0.5) circle (1.5pt);
				\node at (2,0) (alpha3) {$\ $};
				\draw[-,thick] (0.1,0)--(0.9,0);
				\draw[-,thick] (1.1,0)--(1.9,0);
				\draw[-,thick] (2.1,0.1)--(2.6,0.45);
				\draw[-,thick] (2.1,-0.1)--(2.6,-0.45);
				\draw[-,thick] (2.7,0.4)--(2.7,-0.4);
				\node at (0,-0.4) {$\alpha_\infty$};	
				\node at (3.1,0.5) {$\alpha_1$};	
				\node at (3.1,-0.5) {$\alpha_2$};	
				\node at (2.7,0.8) {$3$};	
				\node at (2.7,-0.8) {$3$};	
				\node at (1,-0.4) {$\beta$};
				\node at (2,-0.4) {$\alpha_0$};
				\node at (0,0.4) {$1$};	
				\node at (1,0.4) {$2$};
				\node at (2,0.4) {$3$};
			\end{tikzpicture}
		\end{center}
		where we view $\alpha_\infty$ as a framing, so that $\tilde{v} = \alpha_\infty+2\beta +3(\alpha_0+\alpha_1+\alpha_2)$. It is not hard to see $\Tilde{v}\in\Sigma_0$, and $\M_0^0(v,w)$ has 2 codimension 2 leaves. The corresponding representation types are 
		\begin{equation}
			\begin{split}
				&\tau_1 = (\alpha_\infty, 1;\beta,2;
				\delta,1;\delta,1;\delta,1); \\
				&\tau_2 = (\alpha_\infty+2\beta+3\alpha_0+2\alpha_1+2\alpha_2,1; \alpha_1,1;\alpha_2,1).
			\end{split}
		\end{equation}
		The group $\NamiWeyl(v,w)$ is the product of a type $G_2$ Weyl group with a type $A_2$ Weyl group, while $W(v,w) = W(v,w)_{\Ess}  = \ZZ/2\ZZ \times S_3$, generated by the reflections at $\beta$ and at $\alpha_1,\alpha_2$. The $\ZZ/2\ZZ$ component properly embeds into the $G_2$ component of $\NamiWeyl(v,w)$, and the $S_3$ maps isomorphically onto the type $A_2$ Weyl group component of $\NamiWeyl(v,w)$. 
	\end{example}

	\begin{example}
		Consider the following quiver, where we again regard $\alpha_\infty$ as the framing.    
		\begin{equation}    
			\begin{tikzpicture}[scale=0.9]
				\filldraw[black] (0,0) circle (1.5pt);
				\filldraw[black] (1,0) circle (1.5pt);
				\filldraw[black] (2,0) circle (1.5pt);
				\filldraw[black] (3,0) circle (1.5pt);
				\filldraw[black] (4,0) circle (1.5pt);
				\draw[-,thick] (.1,0)--(.9,0);
				\draw[-,thick] (.1,0.1)--(.9,0.1);
				\draw[-,thick] (.1,-0.1)--(.9,-0.1);
				\draw[-,thick] (1.1,0)--(1.9,0);
				\draw[-,thick] (2.1,0)--(2.9,0);
				\draw[-,thick] (3.1,0)--(3.9,0);
				\draw[-,thick] (3.1,0.1)--(3.9,0.1);
				\draw[-,thick] (3.1,-0.1)--(3.9,-0.1);
				\node at (0,-0.4) {$\alpha_\infty$};	
				\node at (1,-0.4) {$\alpha_1$};
				\node at (2,-0.4) {$\alpha_2$};
				\node at (3,-0.4) {$\alpha_3$};
				\node at (4,-0.4) {$\alpha_4$};
				\node at (0,0.4) {$1$};	
				\node at (1,0.4) {$1$};
				\node at (2,0.4) {$1$};
				\node at (3,0.4) {$1$};
				\node at (4,0.4) {$1$};
			\end{tikzpicture}
		\end{equation}
		The reflection $s_2$ at $\alpha_2$ lies in $W(v,w)$. The canonical decomposition of $\M_0^0(v,w)$ is 
		\begin{equation}
			\M_0^0(v,w) = \M_0^0(\alpha_1 + \alpha_\infty) \times \M_0^0(\alpha_2) \times \M_0^0(\alpha_3+\alpha_4)
		\end{equation}
		where $\M_0^0(\alpha_2) $ is just a point, and $\M_0^0(\alpha_1 + \alpha_\infty), \M_0^0(\alpha_3 + \alpha_4)$ both have no codimension 2 leaves. 
		Here $W(v,w)  = W(v,w)_{\Irr} = \ZZ/2\ZZ$, and $\NamiWeyl(v,w) = W(v,w)_{\Ess} =  \{1\}$.  
	\end{example}
	\section{The symplectic Springer action}\label{subsection: The symplectic Springer action}
	We follow \cite[Definition 4.7]{mcgerty2019springer} and recall the definition of the symplectic Springer action of $\NamiWeyl(v,w)$ on $H^*(\M_0^\theta(v,w),\C)$. 
	
	Let $Y\to X$ be a conical resolution of symplectic singularities. 
	Write $m_Y: \mathcal{Y}\to B_Y$ for the universal graded Poisson deformation of $Y$, where $B_Y = H^2(Y,\C)$. 
	Recall that $B_X = B_Y/\NamiWeyl$ is the base of the universal graded Poisson deformation of $X$. Denote the universal graded Poisson deformation of $X$ by $\mathcal{X}$, and set $\mathcal{X}' = \mathcal{X} \times_{B_X} B_Y$. We have the following commutative diagram, where the right square is Cartesian: 
	\begin{equation}\label{diag: Xprime to By}
		\begin{tikzcd}
			\mathcal{Y} \arrow["\Pi",r] \arrow[d,"m_Y"] & \mathcal{X}' \arrow[r] \arrow[d,"m'_X"] & \mathcal{X} \arrow[d,"m_X"]\\ 
			B_Y \arrow["\id",r]&  B_Y\arrow[r,"q"] & B_X
		\end{tikzcd}
	\end{equation}
	
	In \cite{namikawa2015poissonbirational}, Namikawa defined a closed subset $D\subset B_Y$; it is the locus over which $\Pi$ is not an isomorphism; equivalently, it consists of $b\in B_Y$ such that $m_Y\inverse(b)$ is not affine. 
	By \cite[Main Theorem]{namikawa2015poissonbirational},  $D$ is a union of hyperplanes and is invariant under the action of $\NamiWeyl(v,w)$ on $B_Y$, and $\NamiWeyl$ acts freely on $ B_Y\setminus D$. 
	
	Set $B_Y^\circ := B_Y\setminus D$ and $B_X^\circ = B_Y^\circ/\NamiWeyl$. Set $\mathcal{Y}^\circ := m_Y\inverse(B_Y^\circ)$, $\mathcal{X}^\circ = m_X\inverse(B_X^\circ)$, and $(\mathcal{X}')^\circ = (m_X')\inverse(B_Y^\circ)$. Let $\Pi^\circ$ denote the restriction of $\Pi$ to $\mathcal{Y}^\circ$. 
	It is an isomorphism by the definition of $D$. 
	Finally let $m_Y^\circ,(m_X')^\circ, m_X^\circ$ and $q^\circ$ be the restrictions of $m_Y,m_X',m_X$ and $q$ to $\mathcal{Y}^\circ, (\mathcal{X}')^\circ , \mathcal{X}^\circ$ and $B_Y^\circ$ respectively. 
	Restricting diagram \eqref{diag: Xprime to By} over $B_X^\circ$ and $B_Y^\circ$, we get 
	\begin{equation}
		\begin{tikzcd}
			\mathcal{Y}^\circ \arrow["\Pi^\circ",r] \arrow[d, "m_Y^\circ"] & (\mathcal{X}')^\circ \arrow[d, "(m_X')^\circ"] \arrow[r] & \mathcal{X}^\circ \arrow[d, "m_X^\circ"] \\ 
			B_Y ^\circ\arrow["\id",r]&  B_Y ^\circ \arrow[r,"q^\circ"]& B_X^\circ
		\end{tikzcd}
	\end{equation} 
	It follows that there is a free $\NamiWeyl$-action on $(\mathcal{X}')^\circ$, and hence on $\mathcal{Y}^\circ$. 
	The map $m_Y^\circ$ is $\NamiWeyl$-equivariant. So for any $b^\circ\in B_Y^\circ$ and any $\gamma \in \NamiWeyl$, there is an isomorphism
	\begin{equation}
		\Psi_\gamma: m_Y\inverse(b^\circ) \xrightarrow{\sim} m_Y\inverse(\gamma b^\circ)
	\end{equation} 
	By \cite[Proof of Theorem 1.1, Step 2, part (i)]{namikawa2010poisson}, the map $m_Y: \mathcal{Y} \to B_Y$ is a $C^\infty$ trivial fiber bundle. 
	Therefore, for any $b,b' \in B_Y$, there is a natural isomorphism $\mathrm{top}: H^*(m_Y\inverse(b)) \cong H^*(m_Y\inverse(b'))$, where $\mathrm{top}$ stands for topological. 
	Note $Y = m_Y\inverse(0)$. 
	\begin{definition}[{\cite[Definition 4.7]{mcgerty2019springer}}]\label{def: symp springer action}
		The \textit{symplectic Springer $\NamiWeyl$-action} on $H^*(Y,\C)$ is given as follows: an element $\gamma \in \NamiWeyl$ acts by the composition 
		\begin{equation}
			\gamma*: H^*(Y,\C) \xrightarrow{\mathrm{top}} H^*(m_Y\inverse(\gamma b^\circ),\C) \xrightarrow{\Psi_\gamma^*} H^*(m_Y\inverse( b^\circ),\C) \xrightarrow{\mathrm{top}}H^*(Y,\C) .
		\end{equation}
	\end{definition}
	
	Specifically, let $Y = \M_0^\theta(v,w)$ and $X = \M_0^0(v,w)$, and assume $Y\to X$ is a conical symplectic resolution of singularities. 
	Recall the map $\kappa: \mathfrak{p} \to B_Y$ defined in \eqref{diag: kappa cartesian}. 
	Set $\mathfrak{p}^\circ = \kappa\inverse(B_Y^\circ)$. The following Cartesian diagram is obtained by restricting \eqref{diag: kappa cartesian} to $B_Y^\circ$: 
	\begin{equation}
		\begin{tikzcd}
			\M_{\mathfrak{p^\circ}}^\theta(v,w) \arrow[r] \arrow[d,"\mu"] & \mathcal{Y^\circ} \arrow[d,"m_Y"] \\ 
			\mathfrak{p}^\circ \arrow[r,"\kappa"] & B_Y^\circ
		\end{tikzcd}
	\end{equation}
	\begin{remarklabeled}\label{remark springer equals natural on H2}
		Recall from \Cref{remark natural action NW on namikawacartan} that $\NamiWeyl(v,w)$ acts naturally on $H^2(\M_0^\theta(v,w),\C)$. 
		For any $\gamma\in \NamiWeyl(v,w)$, it is shown in \cite[Lemma 4.8]{mcgerty2019springer} that the symplectic Springer action $\gamma*$ coincides with the natural action of $\gamma$ on $H^2(\M_0^\theta(v,w),\C)$. 
	\end{remarklabeled}
	%By \cite[Lemma 4.8]{mcgerty2019springer}, the symplectic Springer action coincides with the natural action of $\NamiWeyl(v,w)$ on $\cartanh_{\M_0^0(v,w)} = H^2(\M_0^\theta(v,w),\C)$. 
	\section{Maffei's action and tautological line bundles}\label{section:W action on H2}
	In this section we recall the construction of the Maffei's isomorphisms, and examine the Maffei's action of $W(v,w)$ on $H^*(\M_0^\theta(v,w),\C)$. We will compare the Maffei's action with the natural $\NamiWeyl(v,w)$ action on $H^2(\M_0^\theta(v,w),\C)$. 
	\begin{thmdef}\label{thm: mff isomorphism and action}
		Suppose $(\theta,\lambda)$ is generic, and let $\sigma\in W_Q$. 
		\begin{enumerate}
			\item (\cite[Section 3]{maffei2002remark}) There is an isomorphism of varieties 
			\begin{equation}
				\Phi_\sigma:\M_\lambda^{ \theta} (v,w) \xrightarrow{\sim} \M_{\sigma^*\lambda}^{\sigma^* \theta} (\sigma\bullet v,w).
			\end{equation}
			Here $\sigma\bullet v$ is defined by $\sigma\bullet v + \alpha_\infty = \sigma \tilde{v}$. 
			Moreover, if $\sigma_1,\sigma_2\in W_Q$ then $\Phi_{\sigma_1\sigma_2} = \Phi_{\sigma_1} \circ \Phi_{\sigma_2}$.
			
			\item (\cite[Corollary 49]{maffei2002remark} and \cite[Corollary 9.3]{nakajima1994instantons}) There is a $W(v,w)$-action on $H^n(\M_\lambda^\theta (v,w),\ZZ)$, and hence on $H^n(\M_\lambda^\theta (v,w),\C)$. 
		\end{enumerate}
		We call $\Phi_\sigma$ the \emph{Maffei's isomorphism} and the above action on $H^n(\M_\lambda^\theta (v,w),\ZZ)$ the \emph{Maffei's action}. 
	\end{thmdef}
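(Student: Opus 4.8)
This theorem-definition is essentially a recollection: part (1) is Maffei's theorem \cite{maffei2002remark}, and part (2) is its cohomological consequence, due to \cite{maffei2002remark} and \cite{nakajima1994instantons}. The plan is to recall the construction --- which proceeds through reflection isomorphisms at the vertices --- and to flag where the real work lies; part (2) reduces cleanly to part (1) once suitable transport isomorphisms on cohomology are in place.

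For part (1), I would first construct the isomorphism $\Phi_{s_i}$ attached to a single simple reflection $s_i$, $i\in Q_0$. By passing to the framing-free quiver $Q^\infty$ recalled above one may assume $w=0$; then, working with the representation data at the vertex $i$ and its neighbours and using that genericity of $(\theta,\lambda)$ forces the relevant maps at $i$ to have maximal rank, one writes down an explicit algebraic isomorphism on $\mu\inverse(\lambda)^{\theta-ss}$ that descends to $\M_\lambda^\theta(v,w)\xrightarrow{\sim}\M_{s_i^*\lambda}^{s_i^*\theta}(s_i\bullet v,w)$. This is Maffei's construction; note that genericity is preserved under the $W_Q$-action, so the target is again smooth. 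To obtain the composition law $\Phi_{\sigma_1\sigma_2}=\Phi_{\sigma_1}\circ\Phi_{\sigma_2}$ one verifies the Coxeter relations among the $\Phi_{s_i}$: the identities $\Phi_{s_i}^2=\id$ and $\Phi_{s_i}\Phi_{s_j}=\Phi_{s_j}\Phi_{s_i}$ for $n_{ij}=0$ are essentially formal, since the two reflections alter disjoint pieces of the representation, whereas $(\Phi_{s_i}\Phi_{s_j})^3=\id$ for $n_{ij}=1$ reduces, after restricting to the rank-two subquiver on $\{i,j\}$, to a direct $A_2$-type computation; no relation is imposed when $n_{ij}\ge 2$, matching the presentation of $W_Q$. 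Finally, for general $\sigma$ I would fix a reduced word $\sigma=s_{i_1}\cdots s_{i_\ell}$, set $\Phi_\sigma=\Phi_{s_{i_1}}\circ\cdots\circ\Phi_{s_{i_\ell}}$, and invoke Matsumoto--Tits to see this is independent of the reduced word, whence the composition law.

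For part (2), fix $\sigma\in W(v,w)$, so that $\sigma\tilde v=\tilde v$ and hence $\sigma\bullet v=v$; part (1) then provides an algebraic isomorphism $\Phi_\sigma\colon\M_\lambda^\theta(v,w)\xrightarrow{\sim}\M_{\sigma^*\lambda}^{\sigma^*\theta}(v,w)$ between quiver varieties with the same $(v,w)$ but possibly different generic parameters. To upgrade this to an automorphism of cohomology I would appeal to Nakajima's result that over the locus of generic parameters the quiver varieties form a locally trivial $C^\infty$ fiber bundle --- the hyperk\"ahler description, in which the real stability parameter and the complex moment-map parameter play symmetric roles, furnishes a canonical path joining the two parameters in question. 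Parallel transport then gives a canonical isomorphism $\mathrm{top}\colon H^n(\M_\lambda^\theta(v,w),\ZZ)\xrightarrow{\sim}H^n(\M_{\sigma^*\lambda}^{\sigma^*\theta}(v,w),\ZZ)$, and one declares $\sigma$ to act on $H^n(\M_\lambda^\theta(v,w),\ZZ)$ by $\Phi_\sigma^*\circ\mathrm{top}$. That this is a $W(v,w)$-action follows by combining the composition law from part (1) with the functoriality of the transport maps under composable deformations of the parameter; this is the content of \cite[Corollary 49]{maffei2002remark} together with \cite[Corollary 9.3]{nakajima1994instantons}. The action on $H^n(-,\C)$ follows by extension of scalars.

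I expect the main obstacle to be the braid relation $(\Phi_{s_i}\Phi_{s_j})^3=\id$ in part (1): that the two explicitly constructed reflection isomorphisms compose, after three steps, to the identity is the technical heart of Maffei's argument and is not forced by formal considerations. A secondary point demanding care in part (2) is to check that the transport isomorphisms are genuinely canonical --- independent of the path chosen within the generic locus --- and compose correctly, so that $\Phi_\sigma^*\circ\mathrm{top}$ descends to an honest group action rather than a merely projective one.
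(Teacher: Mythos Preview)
Your proposal is essentially correct and matches the paper's treatment: this statement is a recollection with citations, and the paper does not give an independent proof. Two small discrepancies with the construction as the paper later recalls it (and as Maffei actually carries it out) are worth noting.

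For part (1), you describe the simple-reflection isomorphism as an explicit map on $\mu\inverse(\lambda)^{\theta\text{-}ss}$ that descends. Maffei's construction, which the paper follows in \Cref{subsec: construction of Maffei iso}, instead passes through a correspondence variety $Z\subset \mu\inverse(\mathfrak{p})\times(\mu')\inverse(\mathfrak{p})$: one shows that the two projections from $Z^{ss}$ are principal bundles for $\GL(V_j')$ and $\GL(V_j)$ respectively, and the isomorphism is obtained by identifying both quotients with $Z^{ss}/G_{j,v}$. This roof construction is what makes the later computations with tautological line bundles tractable; your direct-map description is closer to Nakajima's and Lusztig's earlier versions.

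For part (2), your worry about path-independence of the transport maps is resolved more cleanly than you suggest: the hyper-K\"ahler parameter space $\mathfrak{p}_{hk}^{\reg}$ is simply-connected, so the local system $\underline{\boldsymbol{\mu}}_*\underline{\ZZ}$ has trivial monodromy and the identification $\mathrm{top}$ is canonical without choosing any path. This is exactly how the paper sets it up in the paragraphs preceding \Cref{def: maffei action}, and it dissolves the ``projective versus honest action'' concern you flag at the end.
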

	We will focus on the case where $\theta$ is generic, $\lambda = 0$, and $n=2$. 
	\subsection{Maffei's isomorphism}	\label{subsec: construction of Maffei iso}
	The construction of Maffei's isomorphism will be useful later, so let us recall it in the case of a simple reflection, following \cite[Section 3.1]{maffei2002remark}. Fix $Q,v,w$, pick $j\in Q_0$, and let $\theta_j >0$. 
	Write $\theta'  = s_j^*{\theta} $ and $\tilde v 
	' = s_j \tilde v$. We are going to define the isomorphism 
	\begin{equation}
		\Phi_{s_j}: \M_\mathfrak{p}^{\theta} (\tilde{v}) \xrightarrow{\sim} \M_\mathfrak{p}^{\theta'} (\tilde v')
	\end{equation}
	that restricts to an isomorphism 
	\begin{equation}\label{eq: maffei for one level}
		\M_\lambda^{\theta} (\tilde{v}) \xrightarrow{\sim} \M_{\lambda'}^{\theta'} (\tilde v')
	\end{equation}
	for any $\lambda\in\mathfrak{p}$, where $\lambda' = \sigma^*\lambda$. 
	
	Without loss of generality assume $j$ is a source. 
	For $k \in Q_0^\infty$, let $V_k,V_k'$ be the vector spaces with dimensions $\tilde{v}_k,\tilde{v}'_k$. Note that when $k\neq j$ we may identify $V_k '= V_k$. Define 
	\begin{equation}\label{eq:def of Tj}
		T_j = \bigoplus_{a, t(a) = j} V_{h(a)}.
	\end{equation}
	By the definition of the $W_Q$-action, $\dim T_j = \tilde v_j + \tilde v'_j$. 
	For an element $r'\in T^*R(Q^\infty,\tilde{v}')$ we write 
	\begin{equation}
		r' = (x_a',y_a')_{a\in Q_1^\infty}
	\end{equation}
	similarly to \eqref{eq:expression of a representation}. 
	Set $\mu': T^*R(Q^\infty,\tilde{v}') \to \prod_{i\in Q_0} \gl(V_i')$ to be the standard moment map.  
	Define 
	\begin{equation}\label{eq: def of Aj and Bj}
		A_j = \bigoplus_{a,t(a)=j} x_a: V_j \to T_j, \ \ 
		B_j = \bigoplus_{a,t(a)=j} y_a: T_j \to V_j.
	\end{equation}
	Similarly define $A_j': V_j'\to T_j$ and $B_j':T_j\to V_j'$.

	Let $Z$ be the subset of $\mu\inverse(\mathfrak{p}) \times (\mu')\inverse(\mathfrak{p})$ consisting of all elements $(r,r')$ such that 
	\begin{enumerate}
		\item $x_a=x'_a,y_a=y'_a$ if $t(a)\neq j$.
		\item The linear maps $A_j',B_j$ fit into a short exact sequence 
		\begin{equation}
			0\to V_j' \xrightarrow{A_j'} T_j \xrightarrow{B_j} V_j \to0 .
		\end{equation}
		\item  $A_j'B_j' = A_jB_j - \lambda_j \id_{T_j}$ if $ \mu(r) =\lambda$. 
		\item $\mu'(r') = \lambda' = \sigma^*\lambda$ if $ \mu(r) =\lambda$. 
	\end{enumerate}
	Denote by $p$ (resp. $p'$) the projection $Z\to \mu\inverse(\mathfrak{p}) $ (resp $Z\to (\mu')\inverse(\mathfrak{p}) $ ), $(r,r')\mapsto r$ (resp $r'$). 
	Define $G_{j,v} = \GL(V_j)\times \GL(V_j')\times \prod_{i\neq j}\GL(V_i)$; there are natural projections $G_{j,v}\to \GL(v)$ and $G_{j,v}\to \GL(v')$, so that $\mu\inverse(\mathfrak{p})$ and $(\mu')\inverse(\mathfrak{p})$ carry $G_{j,v}$ actions. The projection $p$ and $p'$ are $G_{j,v}$-equivariant. 
	
	By \cite[Lemma 30]{maffei2002remark}, $p\inverse(\mu\inverse(\mathfrak{p})^{\theta-ss}) = (p')\inverse ((\mu')\inverse(\mathfrak{p})^{s_j^*\theta-ss})$. 
	Denote $Z^{ss}: = p\inverse(\mu\inverse(\mathfrak{p})^{\theta-ss})$. Moreover, by \cite[Lemma 33]{maffei2002remark}, the restrictions 
	\begin{equation}\label{eq: the projections p and p' from Zss}
		\begin{split}
			&p: Z^{ss} \to \mu\inverse(\mathfrak{p})^{\theta-ss}  \\
			&p': Z^{ss} \to (\mu')\inverse(\mathfrak{p})^{s_j^*\theta-ss}
		\end{split}
	\end{equation}
	are principal $\GL(V_j')$- and $\GL(V_j)$-bundles respectively. It follows that we have isomorphisms 
	\begin{equation}
		\M_\mathfrak{p}^\theta (v,w) \cong Z^{ss}/G_{j,v} \cong \M_\mathfrak{p}^{s_j^*\theta} (s_j \bullet v,w)
	\end{equation}
	and $\Phi_{s_j}$ is defined to be the composition of the above isomorphisms. From the construction, $\Phi_{s_j}$ restricts to an isomorphism \eqref{eq: maffei for one level} for any $\lambda\in \mathfrak{p}$. We will denote the latter isomorphism by $\Phi_{s_j}$ as well. 
	
	\subsection{Tautological line bundles}
	Let $\theta$ be generic so that $G$ acts freely on  $(T^*R)^{\theta-ss}$. Let $\chi\in\ZZ^{Q_0}$ be a character of $G$. We view $\chi$ as an element of $\cartanh_Q^*$. 
	If $X,Y$ are $G$-varieties and $G$ acts freely on $X$, then we write $X\times^G Y = (X\times Y) /G$ by the diagonal action; it is a $Y$-bundle over $X/G$. 
	\begin{definition}
		The line bundle over $\M^\theta_\mathfrak{p}(v,w) $, 
		whose total space is the homogeneous bundle
		\begin{equation}
			\mu\inverse(\mathfrak{p})^{\theta -ss} \times^G \C_\chi
		\end{equation}
		where $G$ acts on the one dimensional vector space $\C_\chi$ by the character $\chi$, is called a \textit{tautological line bundle}. We denote it by $\mathcal{O}(\chi,\M^\theta_\mathfrak{p}(v,w))$. 
		
		For any $\lambda\in\mathfrak{p}$, we can restrict $\mathcal{O}(\chi,\M^\theta_\mathfrak{p}(v,w))$ to $\M_\lambda^\theta(v,w)$. The resulting line bundle will be denoted $\mathcal{O}(\chi,\M^\theta_\lambda (v,w))$; its total space is 
		\begin{equation}
			\mu\inverse(\lambda)^{\theta -ss} \times^G \C_\chi.
		\end{equation}
	\end{definition}
	\begin{theorem}\label{theorem: kappa is chern class and surjective}
		The following are true. 
		\begin{enumerate}
			\item (\cite[Proposition 3.2.1.]{losev2012isomorphisms}) The map $\kappa$ defined in \eqref{diag: kappa cartesian} coincides with the map 
			\begin{equation}
				\begin{split}
					\ZZ^{Q_0} \otimes\C \mapsto H^2(\M_0^\theta(v,w),\C)\\
					\chi \otimes \lambda \mapsto \lambda c_1(\mathcal{O}(\chi,\M^\theta_0(v,w)))
				\end{split}
			\end{equation}
			where $c_1$ denotes the first Chern class of a line bundle. 
			\item (\cite[Theorem 1.2]{mcgerty2018kirwan}) The map $\kappa$ is surjective.
		\end{enumerate}
	\end{theorem}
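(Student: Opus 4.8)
Both assertions are quoted from the literature---part~(1) is \cite[Proposition 3.2.1]{losev2012isomorphisms} and part~(2) is \cite[Theorem 1.2]{mcgerty2018kirwan}---so the plan is to indicate the structure of each argument rather than to reprove everything.

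\textbf{Part (1).} I would identify $\ZZ^{Q_0}\otimes\C$ with $\mathfrak p=\C^{Q_0}$ so that the $i$-th standard basis vector $e_i$ is simultaneously the $i$-th coordinate direction of $\mathfrak p$ and the character $\det\GL(V_i)$ of $G$; the claim is then $\kappa(\lambda)=\sum_i\lambda_i\,c_1(\mathcal{O}(e_i,\M_0^\theta(v,w)))$ for $\lambda=(\lambda_i)\in\mathfrak p$. The first step is to recognise $\kappa$ as the \emph{period map} of the family $\M_\mathfrak p^\theta(v,w)\to\mathfrak p$ from \eqref{diag: kappa cartesian}: since $m_Y\colon\mathcal Y\to B_Y$ is a $C^\infty$-trivial fibre bundle (\cite{namikawa2010poisson}), every fibre of $\M_\mathfrak p^\theta(v,w)\to\mathfrak p$ is canonically $C^\infty$-identified with $\M_0^\theta(v,w)$, and the universality of $\mathcal Y\to B_Y=H^2(\M_0^\theta(v,w),\C)$---whose tautological family has period map the identity---forces $\kappa(\lambda)=[\omega_\lambda]-[\omega_0]\in H^2(\M_0^\theta(v,w),\C)$, where $\omega_\lambda$ is the reduced symplectic form on $\M_\lambda^\theta(v,w)=\mu\inverse(\lambda)^{\theta-ss}/G$. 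The second step is the Duistermaat--Heckman variation: over the regular locus of $\mathfrak p$ the classes $[\omega_\lambda]$ vary linearly, and $[\omega_\lambda]-[\omega_0]$ is the contraction of $\lambda$ with the curvature of the principal $G$-bundle $\mu\inverse(0)^{\theta-ss}\to\M_0^\theta(v,w)$ (the $G$-action here is free because $\theta$ is generic); in the $\det\GL(V_i)$-direction that curvature represents $c_1(\mathcal{O}(e_i,\M_0^\theta(v,w)))$ by the very definition of the tautological bundle, so $[\omega_\lambda]-[\omega_0]=\sum_i\lambda_i\,c_1(\mathcal{O}(e_i,\M_0^\theta(v,w)))$. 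Comparing the two computations yields the formula; the only genuine labour is fixing the normalisation, carried out in \cite{losev2012isomorphisms}.

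\textbf{Part (2).} Granting part~(1), I would reduce surjectivity of $\kappa$ to Kirwan surjectivity. Because $H^2(BG)=\bigoplus_i\C\,c_1(\det\GL(V_i))$ already sits inside $\mathfrak p\otimes\C$, part~(1) identifies $\kappa$ with the complexification of the degree-$2$ component of the Kirwan map $H^*(BG)=H^*_G(T^*R;\C)\to H^*_G(\mu\inverse(0)^{\theta-ss};\C)=H^*(\M_0^\theta(v,w),\C)$ (the last identification again using that $\theta$ is generic). Thus $\kappa$ is surjective if and only if the Kirwan map is onto in degree~$2$, which is a special case of \cite[Theorem 1.2]{mcgerty2018kirwan}: that result establishes Kirwan surjectivity for Nakajima quiver varieties in all degrees by analysing the Kirwan--Ness stratification of $T^*R$ by instability type and showing that the unstable strata contribute no obstruction.

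\textbf{Expected main obstacle.} I expect part~(2) to be the only genuinely hard input. The period-map identification in part~(1) is essentially formal once one has the universal deformation and the $C^\infty$-triviality---both already recalled above---together with the standard Duistermaat--Heckman computation; by contrast, Kirwan surjectivity for quiver varieties is a substantial theorem that I would cite rather than reprove.
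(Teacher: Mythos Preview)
Your proposal is correct, and in fact goes further than the paper itself: the paper does not prove this theorem at all but simply states it, attributing part~(1) to \cite[Proposition 3.2.1]{losev2012isomorphisms} and part~(2) to \cite[Theorem 1.2]{mcgerty2018kirwan}, and then uses it as a black box. Your sketches of the period-map/Duistermaat--Heckman argument for~(1) and the reduction to Kirwan surjectivity for~(2) are accurate summaries of those cited sources, so there is nothing to correct.
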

	Thanks to the surjectivity of $\kappa$, to study the $W(v,w)$ action on $H^2(\M_0^\theta(v,w),\C)$, it suffices to study the corresponding action on the tautological line bundles. 	
	\begin{proposition}\label{prop: tautological line bundle and maffei}
		Let $\chi\in \ZZ^{Q_0}$, $j\in Q_0$. Then 
		\begin{equation}
			\Phi_{s_j}^* \sheafO(\chi, \M_0^\theta(v,w)) = \sheafO(s_j^*\chi, \M_0^{s_j^*\theta}(s_j\bullet v,w)).
		\end{equation}
	\end{proposition}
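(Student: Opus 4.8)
The plan is to pull both tautological bundles back to the correspondence space $Z^{ss}$ of \Cref{subsec: construction of Maffei iso} and reduce the statement to a short computation with characters of $G_{j,v}$. As usual we may assume $j$ is a source, since reversing the arrows at $j$ affects neither the quiver varieties nor the tautological bundles in question. Set $Z^{ss}_0 = p\inverse(\mu\inverse(0)^{\theta-ss})$, and recall that $p$ and $p'$ induce isomorphisms $\bar p\colon Z^{ss}_0/G_{j,v}\xrightarrow{\sim}\M_0^\theta(v,w)$ and $\bar{p'}\colon Z^{ss}_0/G_{j,v}\xrightarrow{\sim}\M_0^{s_j^*\theta}(s_j\bullet v,w)$, and that $\Phi_{s_j}$ is the resulting isomorphism between $\M_0^\theta(v,w)$ and $\M_0^{s_j^*\theta}(s_j\bullet v,w)$. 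It therefore suffices to prove that $\bar p^*\sheafO(\chi,\M_0^\theta(v,w))$ and $\bar{p'}^*\sheafO(s_j^*\chi,\M_0^{s_j^*\theta}(s_j\bullet v,w))$ are isomorphic line bundles on $Z^{ss}_0/G_{j,v}$.

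For $i\in Q_0$ write $\epsilon_i$ for the character of $G_{j,v}$ given by the determinant on the $\GL(V_i)$-factor, and $\epsilon_j'$ for the one given by the determinant on the $\GL(V_j')$-factor. The first step is to identify the two pullbacks as tautological bundles on $Z^{ss}_0/G_{j,v}$. Since $p$ is equivariant for the $G_{j,v}$-action on $\mu\inverse(0)^{\theta-ss}$ through $G_{j,v}\to\GL(v)$, equivariant descent gives $\bar p^*\sheafO(\chi,\M_0^\theta(v,w)) = Z^{ss}_0\times^{G_{j,v}}\C_{\sum_{i\in Q_0}\chi_i\epsilon_i}$. Similarly, using $G_{j,v}\to\GL(s_j\bullet v)$ together with the formula \eqref{eq: formula of si act on chi} for $s_j^*\chi$, one obtains $\bar{p'}^*\sheafO(s_j^*\chi,\M_0^{s_j^*\theta}(s_j\bullet v,w)) = Z^{ss}_0\times^{G_{j,v}}\C_{\psi}$ with $\psi = -\chi_j\epsilon_j' + \sum_{i\in Q_0,\,i\neq j}(\chi_i - c_{ji}\chi_j)\epsilon_i$. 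Comparing exponents, the two pullbacks are isomorphic exactly when the $G_{j,v}$-character $\chi_j\bigl(\epsilon_j+\epsilon_j'+\sum_{i\neq j}c_{ji}\epsilon_i\bigr)$ defines the trivial line bundle on $Z^{ss}_0/G_{j,v}$.

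The trivialization comes from condition (2) in the definition of $Z$: over $Z^{ss}_0$ the maps $A_j'$ and $B_j$ fit into a short exact sequence of $G_{j,v}$-equivariant vector bundles $0\to\underline{V_j'}\xrightarrow{A_j'}\underline{T_j}\xrightarrow{B_j}\underline{V_j}\to0$ of locally constant ranks. Passing to top exterior powers yields a canonical nowhere-vanishing $G_{j,v}$-equivariant isomorphism $\det\underline{V_j'}\otimes\det\underline{V_j}\cong\det\underline{T_j}$, which descends to an isomorphism of the associated tautological bundles on $Z^{ss}_0/G_{j,v}$. Since $j$ is a source, $T_j = \bigoplus_{a\in Q_1^\infty,\ t(a)=j}V_{h(a)}$; the $w_j$ framing summands are copies of $V_\infty$, on which $G_{j,v}$ acts trivially, so the $G_{j,v}$-character of $\det\underline{T_j}$ equals $\sum_{i\neq j}n_{ji}\epsilon_i = -\sum_{i\neq j}c_{ji}\epsilon_i$, while that of $\det\underline{V_j'}\otimes\det\underline{V_j}$ is $\epsilon_j'+\epsilon_j$. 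Hence the character $\epsilon_j+\epsilon_j'+\sum_{i\neq j}c_{ji}\epsilon_i$, being the difference of these two, defines the trivial bundle on $Z^{ss}_0/G_{j,v}$, and therefore so does $\chi_j$ times it. This yields the desired isomorphism of pullbacks, from which the proposition follows.

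The only real difficulty I anticipate is the bookkeeping: keeping straight which $\GL(V_i)$-factor of $G_{j,v}$ is detected by $G_{j,v}\to\GL(v)$ versus $G_{j,v}\to\GL(s_j\bullet v)$; verifying that the $V_\infty$-summands of $T_j$ and the source/sink normalization introduce no spurious character; and making precise the elementary fact that two tautological bundles $Z\times^{H}\C_{\psi_1}$ and $Z\times^{H}\C_{\psi_2}$ on $Z/H$ agree once $\C_{\psi_1-\psi_2}$ admits an $H$-equivariant nowhere-vanishing section over $Z$. Conceptually, the whole computation is just the reflection identity $s_j\alpha_i = \alpha_i - c_{ji}\alpha_j$ promoted to the level of Picard groups, with the Maffei short exact sequence $0\to V_j'\to T_j\to V_j\to0$ playing the role of the defining relation.
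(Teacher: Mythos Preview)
Your proof is correct and follows essentially the same route as the paper: both arguments pull the two tautological bundles back to the correspondence space $Z^{ss}_0$ as $G_{j,v}$-equivariant line bundles and trivialize their ratio using the short exact sequence $0\to V_j'\to T_j\to V_j\to0$. The only cosmetic difference is that the paper writes down an explicit nowhere-vanishing function $F(r,r')=\det(\alpha_j')\det(\beta_j)\det(g)^{-1}$ realizing this trivialization, whereas you phrase it as the canonical isomorphism $\det\underline{V_j'}\otimes\det\underline{V_j}\cong\det\underline{T_j}$ of top exterior powers; these are the same section.
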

	\begin{proof}
		Recall $Z^{ss}$ defined in the construction of Maffei's isomorphism; let $Z_0^{ss}\subset Z^{ss}$ be the subset consisting of pairs $(r,r')$ such that $\mu(r) = 0,\mu'(r')=0$. 
		%		Then the projections $p,p'$ in \eqref{eq: the projections p and p' from Zss} restricts to principal $\GL(V_j')$- and $\GL(V_j)$- bundles over $\mu\inverse(0)^{\theta-ss}$ and $(\mu')\inverse(0)^{s_j^*\theta-ss}$, respectively. The 
		We are going to construct a line bundle on $Z_0^{ss}$ which is isomorphic to the pullback of $\sheafO(\chi, \M_0^\theta(v,w))$ under $p$ and also the pullback of $\sheafO(s_j^*\chi, \M_0^{s_j^*\theta}(s_j\bullet v,w))$ under $p'$, where $p,p'$ denote the restrictions of the projections in \eqref{eq: the projections p and p' from Zss} to $Z_0^{ss}$. This will imply the proposition by the construction of $\Phi_{s_j}$. 
		
		First we define a nonzero function on $Z_0^{ss}$. Fix an injection $A_{j,0}':V_j'\to T_j$ and a surjection $B_{j,0}\to V_j$ such that $0\to V_j' \xrightarrow{A_{j,0}'} T_j \xrightarrow{B_{j,0}} V_j \to0 $ is a short exact sequence. 
		For any element in $Z_0^{ss}$, there is an element $g\in \GL(T_j)$ such that $ \im(A_j') = g \im (A_{j,0}')$. We then define $\alpha_j'\in\GL(V_j')$ and $\beta_j\in\GL(V_j)$ by the following commutative diagram 
		\begin{equation}
			\begin{tikzcd}
				0 \arrow[r] & V_j' \arrow[r, "{A_{j,0}'}"] \arrow[d, "\alpha_j'"] & T_j \arrow[r, "{B_{j,0}}"] \arrow[d, "g"] & V_j \arrow[r] \arrow[d, "\beta_j"] & 0 \\
				0 \arrow[r] & V_j' \arrow[r, "A_j'"]                              & T_j \arrow[r, "B_j"]                      & V_j \arrow[r]                      & 0
			\end{tikzcd}.
		\end{equation}
		For $(r,r')\in Z_0^{ss}$, define 
		\begin{equation}
			F(r,r') = \det(\alpha_j')\det(\beta_j)\det(g)\inverse.
		\end{equation}
		Note that $F(r,r')$ is independent of the choice of $A_{j,0}',B_{j,0}$ and $g$.
		
		Now let $\chi = (\chi_i)_{i\in Q_0}$ be a character of $\GL(v)$ and $s_j^*\chi = (\chi_i - c_{ij}\chi_j)_{i\in Q_0}$ be the corresponding character of $\GL(v')$. Define a character $\tilde\chi = \chi\times 1$ of $G_{j,v}$, i.e. 
		\begin{equation}
			G_{j,v} = \GL(V_j)\times \GL(V_j')\times \prod_{i\neq j}\GL(V_i)\ni(g_j,g_j',g_i)_{i\neq j} \mapsto \chi(g_j,g_i)_{i\neq j}.
		\end{equation}
		Consider the $G_{j,v}$-space $Z_0^{ss}\times \C_{\tilde\chi}$; it is easy to see that the map 
		\begin{equation}
			\begin{split}
				\tilde{p}: Z_0^{ss}\times \C_{\tilde\chi} &\to \mu\inverse(0)^{\theta -ss} \times \C_\chi\\
				(r,r',t)&\mapsto (r,t)
			\end{split}
		\end{equation}
		is $G_{v}$-equivariant, and induces an isomorphism of line bundles $Z_0^{ss}\times^{G_j,v} \C_{\tilde\chi} = \sheafO(\chi, \M_0^\theta(v,w))$. 
		On the other hand, we observe that for $\tilde g = (g_j,g_j',g_i)_{i\neq j}\in G_{j,v}$, 
		\begin{equation}
			F(\tilde g . (r,r')) = F(r,r') \det(g_j)\det(g_j')\inverse \prod_{i\neq j}\det(g_i)^{-c_{ij}}.
		\end{equation}
		It is then straightforward to verify that the map
		\begin{equation}
			\begin{split}
				\tilde{p}: Z_0^{ss}\times \C_{\tilde\chi} &\to \mu\inverse(0)^{s_j^*\theta -ss} \times \C_{s_j^*\chi}\\
				(r,r',t)&\mapsto (r',F(r,r')^{-\chi_j} t )
			\end{split}
		\end{equation}
		is $G_{v'}$-equivariant, and induces an isomorphism of line bundles $Z_0^{ss}\times^{G_j,v} \C_{\tilde\chi} = \sheafO(s_j^*\chi, \M_0^{s_j^*\theta}(s_j\bullet v,w))$. 
		This finishes the proof of the proposition. 
	\end{proof}
\subsection{Maffei's action on the cohomology}
	Let $\star$ denote the Maffei's action of $W(v,w)$ on $H^2(\M_0^\theta(v,w),\C)$. Let us recall its construction, following \cite[Section 5]{maffei2002remark}. We need the hyper-K\"{a}hler construction of quiver varieties, see \cite[Section 2]{nakajima1994instantons} for details. 
	Namely, the vector space $T^*R$ has a natural hyper-K\"{a}hler structure. Let $U = \prod_{i\in Q_0} U(v_i)$ be the maximal compact subgroup of $G$, $\mf{u}$ be its Lie algebra; 
	let $\mf{p}_{hk} := (\mf{u}^*)^U \otimes_\RR \RR^3 \cong \RR^3 \otimes_\ZZ \ZZ^{Q_0}$. 
	The group $U$ acts on $T^*R$ and respects the hyper-K\"{a}hler structure. 
	Let $\mf{p}_{hk}^{\reg}$ be the open subset of $\mf{p}_{hk}$ containing all $\boldsymbol{\zeta}$ such that $\boldsymbol{\zeta}\cdot u \neq 0$ for all $0< u \le v$. 
	Let $\boldsymbol{\mu}: T^*R \to \mf{p}_{hk}$ denote the hyperkahler moment map. Then by \cite[Lemma 48]{maffei2002remark}, the natural morphism 
	\begin{equation}
		\underline{\boldsymbol{\mu}}: \boldsymbol{\mu}\inverse (\mf{p}_{hk}^{\reg}) / U \to \mf{p}_{hk}^{\reg}
	\end{equation}
	is a locally trivial fibration. 
	Recall $\underline{\boldsymbol{\mu}}_*$ denotes the pushforward functor for the category of constructible sheaves with $\ZZ$-coefficients. 
	The base $\mf{p}_{hk}^{\reg}$ is simply-connected. Therefore, the local system 
	\begin{equation}
		\underline{\boldsymbol{\mu}}_* \underline{\ZZ}_{\boldsymbol{\mu}\inverse (\mf{p}_{hk}^{\reg}) / U}
	\end{equation}
	on $\mf{p}_{hk}^{\reg}$ has trivial monodromy, see e.g. \cite[Theorem 1.7.9]{achar2021perverse}. 
	In particular, for any $\boldsymbol{\zeta},\boldsymbol{\zeta}'\in \mf{p}_{hk}^{\reg}$, the cohomology groups 
	$H^n(\boldsymbol{\mu}\inverse (\boldsymbol{\zeta}) / U,\ZZ)$ and $H^n(\boldsymbol{\mu}\inverse (\boldsymbol{\zeta}') / U,\ZZ)$ are naturally identified. 
	By \cite[Proposition 2.4.1]{nakajima2001quiver}, for generic $\theta\in\ZZ^{Q_0}, \lambda\in \C^{Q_0}$, there is an inclusion $\boldsymbol{\mu}\inverse((i\theta,\lambda)) \hookrightarrow \mu\inverse(\lambda)^{\theta-ss}$, inducing a homeomorphism $\boldsymbol{\mu}\inverse ((i\theta,\lambda)) / U \xrightarrow{\sim} \M_\lambda^\theta(v,w)$. 
	Therefore, for any generic $(\theta,\lambda)$ and $(\theta',\lambda')$, and for any $n\ge 0$, there is a natural isomorphism 
	\begin{equation}
		\mathrm{top}: H^n(\M_\lambda^\theta(v,w),\ZZ)\xrightarrow{\sim }H^n(\M_{\lambda'}^{\theta'}(v,w),\ZZ)
	\end{equation} 
	where top stands for ``topological". 
	\begin{definition}\label{def: maffei action}
		Let $\theta$ be generic and $\lambda \in \mathfrak{p}$ be generic. 
		For any $\sigma \in W(v,w)$, the Maffei's action $\sigma\star$ is defined by the composition 
		\begin{equation}
			\sigma \star: H^*(\M_0^\theta(v,w),\ZZ) \xrightarrow{\mathrm{top}} H^*(\M_{\sigma^*\lambda}^{\sigma^*\theta}(v,w),\ZZ)
			\xrightarrow{\Phi_\sigma^*}H^*(\M_{\lambda}^\theta(v,w),\ZZ)
			\xrightarrow{\mathrm{top}} H^*(\M_0^{\theta}(v,w),\ZZ) 
			.
		\end{equation}
		
	\end{definition}
	
	\begin{proposition}\label{proposition:si act on chern class}
		Let $\chi$ be a character of $G$ and $\sigma\in W(v,w)$. Then 
		\begin{equation}
			\sigma\star  \kappa(\chi) = \kappa(\sigma^* \chi). 
		\end{equation}
	\end{proposition}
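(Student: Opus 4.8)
The plan is to reduce the statement to the case of a single simple reflection $\sigma = s_j$ with $s_j \in W(v,w)$, since both sides of the desired equality behave multiplicatively: if $\sigma = s_{j_1}\cdots s_{j_m}$ is a reduced word, then by \Cref{thm: mff isomorphism and action}(1) the isomorphisms compose, $\Phi_\sigma = \Phi_{s_{j_1}}\circ\cdots\circ\Phi_{s_{j_m}}$, so $\sigma\star$ is the composite of the individual $s_{j_\ell}\star$ (interleaved with the $\mathrm{top}$ identifications, which commute with everything); and on the target side $\kappa(\sigma^*\chi) = \kappa(s_{j_1}^*\cdots s_{j_m}^*\chi)$ unwinds one reflection at a time. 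So it suffices to prove $s_j\star\kappa(\chi) = \kappa(s_j^*\chi)$ for each generator $s_j$ of $W(v,w)$.

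For a single reflection, I would trace through \Cref{def: maffei action}. By \Cref{theorem: kappa is chern class and surjective}(1), $\kappa(\chi) = c_1(\sheafO(\chi,\M_0^\theta(v,w)))$ (taking $\lambda=1$, i.e. identifying $\kappa$ with the first Chern class map). The topological isomorphisms $\mathrm{top}\colon H^2(\M_0^\theta(v,w),\ZZ)\to H^2(\M_\lambda^\theta(v,w),\ZZ)$ arising from the hyper-K\"ahler family carry the tautological bundle $\sheafO(\chi,\M_0^\theta(v,w))$ to $\sheafO(\chi,\M_\lambda^\theta(v,w))$ — this is because the tautological line bundles are restrictions of the bundle $\sheafO(\chi,\M_\mathfrak{p}^\theta(v,w))$ on the total family, hence their Chern classes are flat sections of the local system, which is exactly what $\mathrm{top}$ identifies. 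Then \Cref{prop: tautological line bundle and maffei} gives $\Phi_{s_j}^*\sheafO(\chi,\M_0^\theta(v,w)) = \sheafO(s_j^*\chi,\M_0^{s_j^*\theta}(s_j\bullet v,w))$; since $s_j\in W(v,w)$ we have $s_j\bullet v = v$, and applying $\mathrm{top}$ once more lands us at $c_1(\sheafO(s_j^*\chi,\M_0^\theta(v,w))) = \kappa(s_j^*\chi)$. Chaining these identifications yields $s_j\star\kappa(\chi) = \kappa(s_j^*\chi)$.

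The main obstacle I anticipate is the compatibility of the $\mathrm{top}$ isomorphisms with the tautological line bundles — i.e. verifying that $\mathrm{top}(c_1(\sheafO(\chi,\M_0^\theta(v,w)))) = c_1(\sheafO(\chi,\M_\lambda^\theta(v,w)))$. This requires knowing that the hyper-K\"ahler family $\boldsymbol{\mu}\inverse(\mf{p}_{hk}^{\reg})/U \to \mf{p}_{hk}^{\reg}$ carries a line bundle restricting to the various $\sheafO(\chi,-)$ on the fibers, so that its Chern class is a global section of the (trivially monodromic) local system $R^2\underline{\boldsymbol{\mu}}_*\underline{\ZZ}$, and that global sections are exactly the flat ones identified by $\mathrm{top}$. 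This should follow from the fact that $\sheafO(\chi,\M_\mathfrak{p}^\theta(v,w))$ is defined on the whole deformation $\M_\mathfrak{p}^\theta(v,w)$ and that the hyper-K\"ahler description realizes all the quiver varieties $\M_\lambda^\theta(v,w)$ (for $\theta$ fixed generic, $\lambda$ varying) as fibers of this single family, up to the reparametrization $\lambda\mapsto(i\theta,\lambda)$; one then invokes homotopy invariance of Chern classes along the $C^\infty$-trivial fibration. I would state this as a short lemma and prove it by this argument. The rest of the proof is formal bookkeeping with the definitions.
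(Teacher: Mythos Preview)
Your approach is essentially the same as the paper's: the key step in both is that the tautological line bundles are restrictions of a single line bundle $\sheafO_{hk}(\chi) = \boldsymbol{\mu}\inverse(\mf{p}_{hk}^{\reg}) \times^U \C_{\chi|_U}$ on the hyper-K\"ahler family, so its first Chern class is a global section of the (trivially monodromic) local system and hence is preserved by $\mathrm{top}$, after which one invokes \Cref{prop: tautological line bundle and maffei}. Your explicit reduction to simple reflections is not needed (the paper works with general $\sigma$ directly, the composition being implicit in citing \Cref{prop: tautological line bundle and maffei}), and your ``main obstacle'' paragraph is exactly the content of the paper's argument --- just note that the relevant global bundle lives on the hyper-K\"ahler family (where $\theta$ varies too), not merely on $\M_\mathfrak{p}^\theta(v,w)$.
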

	\begin{proof}		
		Under the hyper-K\"{a}hler setup, the tautological line bundle $\sheafO(\M_\lambda^\theta(v,w),\chi)$ is the restriction of the line bundle
		\begin{equation}\label{eq: global hyperkahler taut line bundle}
			\sheafO_{hk}(\chi):=\boldsymbol{\mu}\inverse (\mf{p}_{hk}^{\reg}) \times^U \C_{\chi|_U}
		\end{equation}
		to the fiber over $\boldsymbol{\zeta} = (i\theta,\lambda)$. 
		Here $\chi|_U$ denotes the restriction of $\chi$ to $U$. 
		The first Chern class $c_1(\sheafO_{hk}(\chi))$ gives a global section to the degree 2 part of 
		$\underline{\boldsymbol{\mu}}_* \underline{\ZZ}_{\boldsymbol{\mu}\inverse (\mf{p}_{hk}^{\reg}) / U} $. Therefore
		\begin{equation}
			\mathrm{top}:  c_1(\sheafO(\M_0^\theta(v,w),\chi))\mapsto   c_1(\sheafO(\M_0^{\sigma^*\theta}(v,w),\chi)). 
		\end{equation}
		Now the proposition follows from the definition of $\sigma\star$ and \Cref{prop: tautological line bundle and maffei}.
	\end{proof}
	\subsection{The group embedding $\iota$}\label{subsection: the group embedding iota}
	In this subsection we show that when $\tilde{v}\in\Sigma_0$, the map $\iota$ in \Cref{def: set theoretic iota} extends to a group embedding $W(v,w)_{\Ess} = W(v,w)$ into $\NamiWeyl(v,w)$ that intertwines the Maffei's action and the natural $\NamiWeyl(v,w)$-action on $H^2(\M_0^\theta(v,w),\C)$. 
	
	Suppose $\tilde{v}\in\Sigma_0$ and let $\L_i$ be a codimension 2 leaf of $\M_0^0(v,w)$. Its representation type takes the form $\tau = (\beta^1,1;\cdots;\alpha_{i_t},m_t)$ as in \eqref{eq: an isot decomp}. We assume the $\infty$-component of $\beta^1$ is 1. 
	Then $\hat \cartanh_i^*$ has a basis in bijection with the components of $\tau$ without $\beta_1$. 
	Recall $p_i: H^2(\M_0^\theta(v,w),\C) \to \cartanh_i$, see \Cref{Ivan's decomposition of cartan space,remark: projection from H2 to cartanh i}. 
	\begin{proposition}[{\cite[Proposition 3.4]{wu2023namikawa}}]\label{prop: degree of taut bundle to hihat}
		Let $\chi \in \ZZ^{Q_0}$. Then
		\begin{equation}
			p_i \kappa (\chi) = (\chi\cdot\beta^2,\chi\cdot \beta^3,\cdots,\chi\cdot \alpha_{i_t}) \in \cartanh_i\subset  \hat{\cartanh}^*_i.
		\end{equation}
	\end{proposition}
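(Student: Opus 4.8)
The plan is to reduce the whole statement to a local computation at the codimension $2$ leaf $\L_i$, using the concrete description of the projection $H^2(\M_0^\theta(v,w),\C)\to\hat\cartanh_i^*$ from \Cref{remark: projection from H2 to cartanh i}. First I would fix a generic point $x\in\L_i$, take the analytic neighbourhood $U\ni x$ with $U\cong S_i\times\Delta^{\dim X-2}$ and $\rho\inverse(U)\cong\tilde S_i\times\Delta^{\dim X-2}$ supplied there, and note that, since $\Delta^{\dim X-2}$ is contractible, $p_i\kappa(\chi)$ is by definition the restriction of $\kappa(\chi)=c_1(\sheafO(\chi,\M_0^\theta(v,w)))$ (\Cref{theorem: kappa is chern class and surjective}) to $\rho\inverse(U)$, read inside $H^2(\tilde S_i,\C)\cong\hat\cartanh_i^*$. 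So the task becomes: identify this restricted Chern class in the simple-root coordinates of $\hat\cartanh_i^*$.

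The central input is the local structure of a quiver variety at a semisimple point. The point $x$ corresponds to a semisimple $Q^\infty$-representation whose pairwise non-isomorphic simple summands $r_1,r_2,\dots$ have dimension vectors the distinct entries among $\beta^1,\beta^2,\dots,\alpha_{i_t}$, with the framing vertex $\infty$ occurring only in $r_1$ and there with multiplicity one. Nakajima's étale/analytic slice theorem identifies a neighbourhood of $x$ in $\M_0^0(v,w)$ with a neighbourhood of $0$ in $\M_0^0(\underline Q,\underline v)\times\Delta^{\dim X-2}$, where $\underline Q$ is the $\mathrm{Ext}$-quiver of the summands and $\underline v$ their multiplicities; by \Cref{BS isotropic decomposition} the codimension $2$ hypothesis forces $\underline Q$ to be an affine $ADE$ quiver and $\underline v$ its minimal imaginary root, with $r_1$ occupying the affine vertex. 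Crucially, this identification is compatible with the projective resolutions (so $\tilde S_i$ is the minimal resolution of the corresponding Kleinian singularity, realised as the slice quiver variety), and — being built from a $G_x$-equivariant étale slice inside $\mu\inverse(0)$ — it carries the tautological bundle $\sheafO(\chi,\M_0^\theta(v,w))$ to the tautological bundle of the slice quiver variety attached to the character $\underline\chi$ obtained by restricting $\chi$ to the slice gauge group.

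It then remains to compute $\underline\chi$. Near $x$ the vector space $V_k$ at a vertex $k\in Q_0$ splits $G_x$-equivariantly as $\bigoplus_j \underline V_j\otimes M_{j,k}$, where $M_{j,k}$ is the multiplicity space of $r_j$ at $k$, of dimension $(\dim r_j)_k$; hence $\chi=\bigotimes_k\det_{V_k}^{\chi_k}$ restricts on the factor $\GL(\underline V_j)$ to $\det^{\,\chi\cdot\dim r_j}$. Since $r_1$ carries the framing vertex it plays the role of a framing in the slice and contributes no gauge factor, so, indexing the gauge factors of $\underline Q$ by $\beta^2,\dots,\alpha_{i_t}$, one gets $\underline\chi=(\chi\cdot\beta^2,\chi\cdot\beta^3,\dots,\chi\cdot\alpha_{i_t})$. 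Finally I would quote the classical computation of Chern classes of tautological bundles on the minimal resolution of a Kleinian singularity (Kronheimer; see also \cite{nakajima1994instantons}): under the identification $H^2(\tilde S_i,\C)\cong\hat\cartanh_i^*$, the first Chern class of the tautological bundle of the character $\underline\chi$ has coordinates $\underline\chi$ in the basis dual to the exceptional curves (the bundle of the affine summand being trivial). Combining these gives $p_i\kappa(\chi)=(\chi\cdot\beta^2,\dots,\chi\cdot\alpha_{i_t})$; as a consistency check, equal entries $\beta^j=\beta^{j'}$ of $\tau$ yield equal coordinates, so the answer is $\pi_1(\L_i)$-invariant and lies in $\cartanh_i$, as it must.

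I expect the hard part to be the second step: setting up the analytic local isomorphism of quiver varieties at the semisimple point $x$ \emph{and} checking its two compatibilities — with the projective resolution, and with the tautological line bundles, so that $\sheafO(\chi)$ goes to the tautological bundle of $\underline\chi$. The character bookkeeping in the third step and the normalisation of the Chern class in the fourth are routine by comparison; the only point of care there is to track which summand plays the role of the affine vertex, so that no spurious $\beta^1$-coordinate appears. (An alternative to the whole argument is deformation-theoretic: compose $\kappa$ with $p_i$ and identify $p_i\circ\kappa$ as the classifying map of the slice deformation near $\L_i$, which by Crawley-Boevey's moment-map conditions is $\lambda\mapsto(\lambda\cdot\beta^2,\dots,\lambda\cdot\alpha_{i_t})$; this runs into essentially the same local-structure obstacle.)
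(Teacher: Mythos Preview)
This proposition is not proved in the present paper; it is quoted as \cite[Proposition~3.4]{wu2023namikawa}, so there is no in-paper argument to compare against. Your outline is the natural proof and almost certainly the one in the cited reference: use \Cref{remark: projection from H2 to cartanh i} to reduce to the slice, invoke the \'etale-local description of a quiver variety at a semisimple point (so that the slice is the quiver variety for the $\mathrm{Ext}$-quiver $\underline Q$ with the multiplicity vector, compatibly with the resolution and with tautological bundles), observe that $\chi$ restricts on the slice factor $\GL(\underline V_j)$ to $\det^{\chi\cdot\dim r_j}$ via the multiplicity-space decomposition, and finish with the Kronheimer--Nakajima computation of tautological Chern classes on the ALE resolution. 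Your character bookkeeping is correct, as is your handling of $\beta^1$ as the framing vertex of $\underline Q$, and you have accurately isolated the one substantive step---the compatibility of the \'etale slice with both the resolution and the tautological bundles---which is part of the standard local package (Crawley-Boevey, Nakajima; in the form relevant here, \cite{bellamy2021symplectic}). I see no gap.
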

	Recall $Gen_{\Ess}$ is the set of generators of $W(v,w)_{\Ess}$. When $\tilde{v}\in\Sigma_0$, $Gen_{\Ess}$ is the same as the generating set of $W(v,w)$, \eqref{eq:gen of Wvw}. 
	\begin{theorem}\label{thm: Maffei and NW action of sj are same}
		Suppose $\tilde{v}\in \Sigma_0$, and pick $s_j\in Gen_{\Ess}$. 
		Then the following actions on $H^2(\M_0^\theta(v,w),\C)$ coincide: 
		\begin{enumerate}
			\item the Maffei's action $s_j\star$, and
			\item the action of $\iota(s_j)$ via the natural $\NamiWeyl(v,w)$-representation.
		\end{enumerate}
	\end{theorem}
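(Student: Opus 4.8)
The plan is to push everything onto the tautological line bundles. By \Cref{theorem: kappa is chern class and surjective}, $\kappa\colon\mathfrak p\to H^2(\M_0^\theta(v,w),\C)$ is surjective with $\kappa(\chi)=c_1(\sheafO(\chi,\M_0^\theta(v,w)))$, so it suffices to compare $s_j\star$ and the natural action of $\iota(s_j)$ on every class $\kappa(\chi)$, $\chi\in\ZZ^{Q_0}$. By \Cref{proposition:si act on chern class} the Maffei side is $s_j\star\kappa(\chi)=\kappa(s_j^*\chi)$, and \eqref{eq: formula of si act on chi} gives $s_j^*\chi-\chi=-\chi_j\,\alpha_j^{\flat}$ with $\alpha_j^{\flat}:=(c_{jk})_{k\in Q_0}$, hence $s_j\star\kappa(\chi)=\kappa(\chi)-\chi_j\,\kappa(\alpha_j^{\flat})$. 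On the Namikawa side, by \Cref{prop: simple real cod 2 root} there is a \emph{unique} codimension $2$ leaf $\L_i$ whose representation type contains $\alpha_j$, and by \Cref{def: set theoretic iota} the natural action of $\iota(s_j)$ is the Weyl reflection attached to $\alpha_j$ on the summand $\cartanh_i$ of \eqref{eq: decomposition of namikawacartan space}, and trivial on $H^2(\M_0^0(v,w)^{\reg},\C)$ and on every other $\cartanh_{i'}$. So the theorem reduces to the single assertion

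\emph{(i)} $\kappa(\alpha_j^{\flat})$ lies in the summand $\cartanh_i$ and is nonzero;

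together with the (then purely formal) check that, granting \emph{(i)}, the rank-one perturbation $s_j\star$ acts on $\cartanh_i$ exactly as that reflection.

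For the latter, assume \emph{(i)}. Then $s_j\star$ differs from the identity by $\kappa(\chi)\mapsto-\chi_j\kappa(\alpha_j^{\flat})$, which is supported in $\cartanh_i$, so $s_j\star$ already agrees with $\iota(s_j)$ on $H^2(X^{\reg})$ and on the $\cartanh_{i'}$, $i'\neq i$. On $\cartanh_i$ I would compute with \Cref{prop: degree of taut bundle to hihat}: in the coweight coordinates of $\hat{\cartanh}_i^*$ dual to the components of the representation type of $\L_i$ other than $\beta^1$, one has $p_i\kappa(\chi)=(\chi\cdot\beta^2,\dots,\chi\cdot\alpha_{i_t})$. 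Since $\widetilde v_\infty=1$, every such component $u$ has vanishing $\infty$-coordinate, so $\chi\cdot\bigl(s_j^*\chi\bigr)$-type bookkeeping gives $s_j^*\chi\cdot u=\chi\cdot u-(\chi\cdot\alpha_j)\,(\alpha_j,u)_{Q^\infty}$, and $(\alpha_j,u)_{Q^\infty}$ is precisely the off-diagonal Cartan entry of the affine quiver $\underline Q$ of \Cref{BS isotropic decomposition} joining the nodes of $u$ and of $\alpha_j$; this is exactly the coordinate formula for the simple reflection $s_{\alpha_j}$ on $\hat{\cartanh}_i^*$. Passing to $\pi_1(\L_i)$-invariants via \Cref{prop: generator of NWi}, $\chi\mapsto p_i\kappa(s_j^*\chi)$ is the action of the generator $\iota(s_j)$ of $W_i$, and nonvanishing of $\kappa(\alpha_j^{\flat})$ follows since $p_i\kappa(\alpha_j^{\flat})=\bigl((\alpha_j,\beta^2)_{Q^\infty},\dots\bigr)$ is the simple coroot of $\alpha_j$, which is not $0$.

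It remains to prove \emph{(i)}, i.e. $p_{i'}\kappa(\alpha_j^{\flat})=0$ for every leaf $\L_{i'}\neq\L_i$ and $p_{X^{\reg}}\kappa(\alpha_j^{\flat})=0$. The first is the easy half: by \Cref{prop:a real root only appears in one leaf}, $\alpha_j$ is $(\,\cdot\,,\,\cdot\,)_{Q^\infty}$-orthogonal to every component of the representation type of $\L_{i'}$, and — using $\widetilde v_\infty=1$ again so that all components other than the one with $\infty$-coordinate $1$ are supported in $Q_0$ — \Cref{prop: degree of taut bundle to hihat} gives $p_{i'}\kappa(\alpha_j^{\flat})=0$.

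The hard part, and the step I expect to be the main obstacle, is $p_{X^{\reg}}\kappa(\alpha_j^{\flat})=0$, i.e. that $\sheafO(\alpha_j^{\flat},\M_0^\theta(v,w))$ restricts with trivial rational first Chern class to $\rho\inverse(\M_0^0(v,w)^{\reg})\cong\M_0^0(v,w)^{\reg}$, the locus of simple $\overline{Q^\infty}$-representations (here we use $\widetilde v\in\Sigma_0$). My plan is to extract this from the construction in \Cref{subsec: construction of Maffei iso}: since $s_j\in W(v,w)$ we have $s_j\bullet v=v$, so $\Phi_{s_j}$ is an isomorphism $\M_0^\theta(\widetilde v)\to\M_0^{s_j^*\theta}(\widetilde v)$; passing to global functions and using that the affinization $\M_0(v,w)$ is independent of the stability parameter (\Cref{subsection:quiver varieties intro}), $\Phi_{s_j}$ descends to an automorphism $\psi_j$ of $X:=\M_0(v,w)$, and the exact sequence $0\to V_j'\to T_j\to V_j\to0$ of tautological bundles on $Z^{ss}$ rewrites $c_1(\sheafO(\alpha_j^{\flat}))|_{X^{\reg}}$ as $(1-\psi_j^{*})\,c_1(\det V_j)|_{X^{\reg}}$. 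One is thereby reduced to showing that $\psi_j$ acts trivially on $H^2(X^{\reg},\C)$ — equivalently, that the restriction $H^2(\M_0^\theta(v,w),\C)\to H^2(X^{\reg},\C)$ annihilates $\kappa(\alpha_j^{\flat})$. I expect this to come from a description of $\mathrm{Pic}(X^{\reg})\otimes\C$ (hence of $H^2(X^{\reg},\C)$, using rationality of symplectic singularities) by tautological classes modulo the relations $\alpha_k^{\flat}$ with $(\alpha_k,\widetilde v)=0$, where $\widetilde v\in\Sigma_0$ is essential so that $X^{\reg}$ is exactly the simple locus; this identification is where the genuine work lies, and I would either establish it directly using simplicity of the representations over $X^{\reg}$ and the balanced-weight condition $\alpha_j^{\flat}\cdot\widetilde v=(\alpha_j,\widetilde v)_{Q^\infty}=0$, or else deduce it from the line-bundle computations already available for affine quiver varieties.
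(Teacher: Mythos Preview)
Your treatment of the $\cartanh_i$–components is essentially the paper's: you compute $p_i\kappa(s_j^*\chi)$ via \Cref{prop: degree of taut bundle to hihat} and match it with the simple reflection in $\hat{\cartanh}_i^*$, and for $i'\neq i$ you invoke \Cref{prop:a real root only appears in one leaf}. That part is fine.

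The gap is in the $H^2(X^{\reg})$ piece. Your reduction ``$c_1(\sheafO(\alpha_j^{\flat}))|_{X^{\reg}}=(1-\psi_j^{*})c_1(\det V_j)|_{X^{\reg}}$, so it suffices that $\psi_j$ act trivially on $H^2(X^{\reg},\C)$'' is not a reduction: you immediately restate it as ``equivalently, the restriction $H^2(\M_0^\theta)\to H^2(X^{\reg})$ annihilates $\kappa(\alpha_j^{\flat})$'', which is exactly the claim you set out to prove. You then defer the content to an unproved description of $\mathrm{Pic}(X^{\reg})\otimes\C$ ``by tautological classes modulo the relations $\alpha_k^{\flat}$''; but this Picard description, if true, is at least as hard as the original statement, and you give no argument for it. So the proof is incomplete at precisely the point you identified as the main obstacle.

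The paper closes this gap by a direct, concrete construction rather than any Picard-group analysis. The key observation is that over $X^{\reg}$ every $r$ is \emph{simple} (since $\tilde v\in\Sigma_0$), hence simultaneously $\theta$–stable and $s_j^*\theta$–stable; by \cite[Lemma~32]{maffei2002remark} this forces $A_j$ to be injective \emph{and} $B_j$ to be surjective. One can then fix a reference exact sequence $0\to V_j\xrightarrow{A_{j,0}}T_j\xrightarrow{B_{j,0}}V_j\to 0$, choose $g\in\GL(T_j)$ with $\im A_j=g\,\im A_{j,0}$, and let $\alpha_j,\beta_j\in\GL(V_j)$ be the induced automorphisms; the function $F(r)=\det(g)^{-1}\det(\alpha_j)\det(\beta_j)$ is nowhere vanishing on the simple locus and transforms by exactly the character $\alpha_j^{\flat}$, so $r\mapsto(r,F(r)^{\chi_j})$ descends to a nowhere vanishing section of $\sheafO(\delta\chi)$ over $X^{\reg}$. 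That is the missing ingredient: an explicit trivialization coming from the Maffei correspondence itself, not an abstract identification of $\mathrm{Pic}(X^{\reg})$.
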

	\begin{proof}
		Recall the decomposition \Cref{Ivan's decomposition of cartan space}. 
		Denote by $p_{\reg},p_i$ the projections from $H^2(\M_0^\theta(v,w),\C)$ to $H^2(\M_0^0(v,w)^{\reg},\C)$ and to $\cartanh_i$ respectively. Thanks to the surjectivity of $\kappa$, it suffices to show that for any $\chi\in \ZZ^{Q_0}$, 
		$p_i s_j\star \kappa(\chi) = p_i \iota(s_j) \kappa(\chi)$ for any $i$, and $p_{\reg} s_j\star \kappa(\chi) = p_{\reg} \iota(s_j) \kappa(\chi)$. 
		
		Let us consider the $p_i$ part. 
		Let $(\alpha_j,\tilde{v}) = 0$ so that $s_j\in W(v,w)$, and let $\chi \in \ZZ^{Q_0}$. Let $\tau = (\beta_1,1; \cdots;\beta_s,1; \alpha_{i_1},m_1;\cdots;\alpha_{i_t},m_t)$ be the representation type of the codimension 2 leaf $\L_i$. We compute
		\begin{equation}\label{eq:formula for sj action on pikappachi}
			\begin{split}
				p_i (s_j\star \kappa(\chi)) 
				&= p_i (\kappa(s_j^*\chi))
				\\ 
				&= (\beta_2 \cdot s_j^*\chi, \cdots, \alpha_{i_t}\cdot s_j^*\chi) \\
				& = (s_j\beta_2 \cdot \chi, \cdots, s_j\alpha_{i_t}\cdot \chi) \\
				&= (\beta_2 \cdot \chi - (\beta_2,\alpha_j)\chi_j , \cdots, \alpha_{i_t}\cdot \chi - (\alpha_{i_t},\alpha_j)\chi_j).
				%			 - \sum_{k=1}^s(\beta_k,\alpha_j)\chi_j -\sum_{k=1}^t (\alpha_{i_k},\alpha_j)\chi_j. 
			\end{split}
		\end{equation}
		The first equality is \Cref{proposition:si act on chern class}, the second is \Cref{prop: degree of taut bundle to hihat}, and the latter 2 are obvious. 
		
		By \Cref{prop:a real root only appears in one leaf}, there is a unique codimension 2 leaf whose representation type $\tau$ contains $\alpha_j$. If $\L_i$ is not this leaf, then all $(\beta_k,\alpha_j)$ and $(\alpha_{i_\ell},\alpha_j)$ are 0. 
		In this case, $s_j$ acts trivially on $\cartanh_i$. 		
		
		On the other hand, suppose $\tau$ contains $\alpha_j$.  Without loss of generality we may assume $\alpha_j = \alpha_{i_1}$. 
		Consider the affine type quiver $\underline{Q}$ defined in \Cref{BS isotropic decomposition}. 
		Let $\underline{\alpha}_1,\cdots,\underline{\alpha}_s$ be the simple real roots of $\underline{Q}$ corresponding to $\beta_1,\cdots,\beta_s$ and $\underline{\alpha}_{i_1},\cdots,
		\underline{\alpha}_{i_t}$ the simple real roots of $\underline{Q}$ corresponding to $\alpha_{i_i},\cdots,\alpha_{i_t}$. 
		Remove the vertex associated to $\beta^1$ and we get a finite type quiver $\underline{Q}_{fin}$.  Let $\underline{s}_{?}$ be the reflection in the Weyl group of $\underline{Q}_{fin}$ corresponding to $\underline{\alpha}_?$, where $?$ can be $1,\cdots,s,i_1,\cdots,i_t$. If $\underline{\chi} \in \ZZ^{\underline{Q}_{fin}}$, then by \eqref{eq: formula of si act on chi}, 
		\begin{equation}\label{eq: sj in slice affine quiver action}
			\underline{s}_{i_1}^* \underline{\chi} =\underline{\chi} - \sum_{k=1}^s (\underline{\alpha}_k,\underline{\alpha}_{i_1}) \underline{\chi}_{i_1}  \underline{\alpha}_{k}^* - \sum_{\ell=1}^t (\underline{\alpha}_{i_\ell},\underline{\alpha}_{i_1}) \underline{\chi}_{i_1}  \underline{\alpha}_{i_\ell}^*
		\end{equation}
		Recall $(\underline{\alpha}_k,\underline{\alpha}_{i_1}) = (\beta_k,\alpha_{i_1})$ and $(\underline{\alpha}_{i_\ell},\underline{\alpha}_{i_1}) = (\alpha_{i_\ell},\alpha_{i_1})$, by (2) of \Cref{BS isotropic decomposition}. 
		Now set $\underline{\chi} = p_i\kappa(\chi)$. 
		Compare the last term of \eqref{eq:formula for sj action on pikappachi} and the right hand side of \eqref{eq: sj in slice affine quiver action}, and recall $\alpha_j = \alpha_{i_i}$, it is easy to see that they coincide. 
		It follows that $p_i( s_j\star \kappa(\chi) )= p_i (\iota(s_j) \kappa(\chi))$ for any $i$.
		
		Let us turn to the $p_{\reg}$ part. Since $\iota(s_j)$ acts trivially on $H^2(\M_0^0(v,w)^{\reg},\C)$, we need to show $s_j\star$ is trivial as well, which by \Cref{proposition:si act on chern class} is equivalent to that 		$p_{\reg}\kappa(\chi-s_j^*\chi) = 0$, and this is further equivalent to that the line bundle $\sheafO(\chi-s_j^*\chi,\M_0^\theta(v,w))$ is trivial when restricted to $\M_0^0(v,w)^{\reg}$. 
		
		The proof is very similar to that of \Cref{prop: tautological line bundle and maffei}.
		%	 We assume $\theta_j<0$; otherwise, we consider the line bundle $\sheafO(s_j^*\chi-\chi,\M_0^{s_j^*\theta}(v,w))$ and apply \Cref{prop: tautological line bundle and maffei} (note $s_j\bullet v = v$). 
		Write $\delta\chi = (\chi - s_j^*\chi)$, and observe 
		that $(\delta \chi)_k = \chi_j c_{jk}$. 
		We will prove the proposition by writing down a nowhere vanishing section of $\sheafO(\delta\chi ,\M_0^\theta(v,w))$ on $\M_0^0(v,w)^{\reg}$. 
		
		Suppose $r\in \mu\inverse(0)^{\theta-ss}$ and $[r]^\theta$ is its image in $\M_0^\theta(v,w)$. 
		Recall $\rho: \M_0^\theta(v,w) \to \M_0^0(v,w)$ is the natural projective map, and $\rho([r]^\theta) = [r^{ss}]^0$, where $r^{ss}$ is the semisimplification of the representation $r$ and $[r^{ss}]^0$ is the image of $r^{ss}$ in $\M_0^0(v,w)$. In particular, if $\rho([r]^\theta)  \in \M_0^0(v,w)^{\reg}$, then since $\tilde{v}\in \Sigma_0$, $r^{ss}$ has representation type $(\tilde{v},1)$. So $r$ is simple as a $Q$-representation. Therefore, the stabilizer of $r$ in $G$ is trivial, and the $G$-orbit of $r$ in $\mu\inverse(0)$ is closed.  
		It follows that $r$ is stable for any stability condition, in particular for both $\theta$ and $s_j^*\theta$. 	
		
		Recall the space $T_j$ defined in \eqref{eq:def of Tj} and the morphisms $A_j$ and $B_j$ in \eqref{eq: def of Aj and Bj}. By \cite[Lemma 32]{maffei2002remark}, the semistability for $\theta$ and $s_j^*\theta$ implies that $A_j$ is injective and $B_j$ is surjective. 	
		Fix an injection $A_{j,0} : V_j \to T_j$ and a surjection $B_{j,0}: T_j\to V_j $ that fit into a short exact sequence 
		$0\to V_j \xrightarrow{A_{j,0}} T_j \xrightarrow{B_{j,0}} V_j \to 0  $. For any $r\in \mu\inverse(0)^{\reg}$, there is some element $g\in \GL(T_j)$ such that $\im A_j = g \im A_{j,0}$, and the following commutative diagram defines automorphisms $\alpha_j$ and $\beta_j$ of $V$: 
		\begin{equation}
			\begin{tikzcd}
				0 \arrow[r] & V_j \arrow[r, "{A_{j,0}}"] \arrow[d, "\alpha_j"] & T_j \arrow[r, "{B_{j,0}}"] \arrow[d, "g"] & V_j \arrow[r] \arrow[d, "\beta_j"] & 0 \\
				0 \arrow[r] & V_j \arrow[r, "A_j"]                              & T_j \arrow[r, "B_j"]                      & V_j \arrow[r]                      & 0
			\end{tikzcd}.
		\end{equation}
		Define $F(r) =\det g\inverse\det \alpha_j \det \beta_j $. Then the map 
		\begin{equation}
			\mu\inverse(0)^{\reg} \to \mu\inverse(0)^{\reg} \times \C^*_{\delta\chi}, r\mapsto (r,F(r)^{\chi_j})
		\end{equation}
		is $G$-equivariant and descends to a nowhere vanishing section of $\sheafO(\delta\chi ,\M_0^\theta(v,w))$ on $\M_0^0(v,w)^{\reg}$. This finishes the proof of the theorem.  
	\end{proof}
	
	\begin{corollary}\label{cor: iota extends to group emb when v in Sigma0}
		Suppose $\tilde{v}\in\Sigma_0$. Then the map $\iota:Gen_{\Ess} \to \NamiWeyl(v_i) $ extends to a group embedding $W(v,w)\hookrightarrow \NamiWeyl(v,w)$. 
	\end{corollary}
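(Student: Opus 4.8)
The plan is to upgrade the set-theoretic map $\iota$ of \Cref{def: set theoretic iota} to a genuine homomorphism using \Cref{thm: Maffei and NW action of sj are same} together with the faithfulness of the natural $\NamiWeyl(v,w)$-representation on $H^2(\M_0^\theta(v,w),\C)$, and then to prove injectivity by exhibiting a direct product decomposition of $W(v,w)$ compatible with the decomposition $\NamiWeyl(v,w)=\prod_i W_i$.

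First I would record that, since $\tilde v\in\Sigma_0$, the canonical decomposition is trivial, so $W(v,w)_{\Irr}=1$, $W(v,w)=W(v,w)_{\Ess}$ is generated by $Gen_{\Ess}$, and $\NamiWeyl(v,w)=\prod_{i=1}^{n}W_i$ indexed by the codimension $2$ leaves $\L_1,\dots,\L_n$ of $\M_0^0(v,w)$. Let $\nu\colon\NamiWeyl(v,w)\to\GL\big(H^2(\M_0^\theta(v,w),\C)\big)$ be the natural action of \Cref{remark natural action NW on namikawacartan}: under the decomposition \eqref{eq: decomposition of namikawacartan space} each $W_i$ acts on $\cartanh_i$ by its finite-type reflection representation, which is faithful, and trivially on the complementary summands, so $\nu$ is injective. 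Let $m\colon W(v,w)\to\GL\big(H^2(\M_0^\theta(v,w),\C)\big)$ be the homomorphism given by the Maffei action. By \Cref{thm: Maffei and NW action of sj are same}, $m(s_j)=\nu(\iota(s_j))$ for every generator $s_j\in Gen_{\Ess}$, so $\im(m)$ is generated by the elements $\nu(\iota(s_j))$ and hence lies in $\nu(\NamiWeyl(v,w))$; composing with $\nu\inverse$ yields a homomorphism $\widetilde\iota:=\nu\inverse\circ m\colon W(v,w)\to\NamiWeyl(v,w)$ with $\widetilde\iota(s_j)=\iota(s_j)$, which is the desired extension.

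It remains to check $\widetilde\iota$ is injective. By \Cref{prop: simple real cod 2 root}, each generator $s_j$ of $W(v,w)$ is attached to a unique leaf $\L_{\phi(j)}$ whose representation type contains $\alpha_j$; set $S_i=\{s_j:\phi(j)=i\}$. By \Cref{prop:a real root only appears in one leaf}, $(\alpha_j,\alpha_{j'})=0$ whenever $s_j\in S_i$ and $s_{j'}\in S_{i'}$ with $i\neq i'$, so the Coxeter graph of $W(v,w)$ is the disjoint union of its restrictions to the $S_i$, giving $W(v,w)=\prod_{i=1}^{n}W(v,w)_i$ with $W(v,w)_i:=\langle S_i\rangle$. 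By construction of $\iota$ (\Cref{def: set theoretic iota}, via \Cref{prop: generator of NWi}), $\widetilde\iota$ carries $W(v,w)_i$ into the factor $W_i$, so $\widetilde\iota=\prod_i\widetilde\iota_i$ and it suffices to see each $\widetilde\iota_i\colon W(v,w)_i\to W_i$ is injective. Writing the representation type of $\L_i$ as in \eqref{eq: an isot decomp} with slice quiver $\underline Q$ of affine $ADE$ type, \Cref{prop: generator of NWi} identifies $\widetilde\iota_i(s_{i_\ell})$ with the simple reflection of the finite-type Weyl group of $\underline Q$ minus its $\beta_1$-vertex attached to $\underline\alpha_{i_\ell}$; these reflections are pairwise distinct (the $\alpha_{i_\ell}$ are distinct), already lie in $W_i$ (their $\pi_1(\L_i)$-orbits are singletons), and generate a parabolic subgroup of $W_i$. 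Since $\underline Q$ is simply laced, $(\alpha_{i_\ell},\alpha_{i_{\ell'}})=(\underline\alpha_{i_\ell},\underline\alpha_{i_{\ell'}})\in\{0,-1\}$ by (2) of \Cref{BS isotropic decomposition}, so the Coxeter presentations of $W(v,w)_i$ and of this parabolic subgroup agree, the only relations between two distinct generators being commutation when $n_{i_\ell i_{\ell'}}=0$ and the braid relation of length $3$ when $n_{i_\ell i_{\ell'}}=1$. Hence $\widetilde\iota_i$ is an isomorphism onto that parabolic subgroup, in particular injective, and $\widetilde\iota$ is an embedding.

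I expect the only delicate point to be the last step: one must check that the ``real-type'' generators of the \emph{folded} Weyl group $W_i=(\widehat W_i)^{\pi_1(\L_i)}$ are honest simple reflections of $\widehat W_i$ that already lie in $W_i$, and that the parabolic subgroup they generate matches $W(v,w)_i$ on the nose — this is precisely where the simply laced hypothesis on the slice quiver and the distinctness of the $\alpha_{i_\ell}$ are used. Everything else is formal, given \Cref{thm: Maffei and NW action of sj are same}, \Cref{prop: simple real cod 2 root} and \Cref{prop:a real root only appears in one leaf}.
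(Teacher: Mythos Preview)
Your construction of the homomorphism $\widetilde\iota=\nu^{-1}\circ m$ via \Cref{thm: Maffei and NW action of sj are same} and faithfulness of the natural $\NamiWeyl(v,w)$-representation is exactly what the paper does. Where you diverge is in the injectivity argument. The paper shows directly that the Maffei representation of $W(v,w)$ on $\bigoplus_i\cartanh_i$ is faithful: if $\sigma$ acts trivially, then using \Cref{prop: degree of taut bundle to hihat} one gets $\alpha_j\cdot(\chi-\sigma^*\chi)=0$ for all $\chi$ and all $j$ with $(\alpha_j,\tilde v)=0$, hence $\sigma\alpha_j=\alpha_j$ for every such $j$; a reduced expression $\sigma=s_{i_1}\cdots s_{i_k}$ would then give $\sigma\alpha_{i_k}<0$, a contradiction. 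Your approach instead splits $W(v,w)=\prod_i W(v,w)_i$ along leaves (via \Cref{prop: simple real cod 2 root} and \Cref{prop:a real root only appears in one leaf}) and matches each factor with a parabolic subgroup of $W_i$ by comparing Coxeter presentations, using that the slice quiver $\underline Q$ is simply laced affine so the pairings $(\alpha_{i_\ell},\alpha_{i_{\ell'}})$ lie in $\{0,-1\}$. This is correct and yields more structural information---you explicitly identify the image of $\widetilde\iota$ as a product of standard parabolics---at the cost of checking the folding detail you flag at the end (which does go through: the real-root components are singleton $\pi_1$-orbits by \Cref{prop: generator of NWi}, so their reflections are simultaneously simple in $\widehat W_i$ and in $W_i$, with the same pairwise Coxeter relations). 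The paper's argument is shorter and avoids any discussion of folding, but both are valid.
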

	\begin{proof}
		The group $\NamiWeyl(v,w)$ is naturally identified with a subgroup of $\GL(\bigoplus_i \cartanh_i)$. By \Cref{thm: Maffei and NW action of sj are same}, the Maffei's action restricts to a representation of $W(v,w)$ in $\bigoplus_i \cartanh_i$ as well, and the images of the generators in $\GL(\bigoplus_i \cartanh_i)$ lie in $\NamiWeyl(v,w)$. Therefore, it suffices to show that the $W(v,w)$-representation in $\bigoplus_i \cartanh_i$ is faithful. 
		
		If $\sigma \in W(v,w)$ acts trivially on $\bigoplus_i \cartanh_i$ then $p_i (\sigma\star \kappa(\chi)) = p_i\kappa(\chi)$ for all $i$ and $\chi$. 
		By \Cref{proposition:si act on chern class}, this is equivalent to $p_i(\kappa (\sigma^*\chi - \chi)) = 0$ for all $i$ and $\chi$. 
		Assume $j\in Q_0$ and $(\tilde{v},\alpha_j) = 0$. By \Cref{prop: simple real cod 2 root}, the representation type of some codimension 2 leaf $\L_i$ contains $\alpha_j$.
		Then by \Cref{prop: degree of taut bundle to hihat}, $\alpha_j \cdot (\chi - \sigma^*\chi) = 0$ for any $\chi$. It follows that $\sigma \alpha_j = \alpha_j$ for all $j$ such that $(\tilde{v},\alpha_j) = 0$. We claim $\sigma = 1$, thus proving faithfulness. In fact, assume $\sigma \neq 1$, and let $\sigma = s_{i_1}s_{i_2}\cdots s_{i_k}$ be a reduced expression, where each $s_{i_j}$ is a generator of $W(v,w)$ in \Cref{prop:generators of Wvw}. 
		Then $\ell(ws_{i_k}) = \ell(w)-1$, so by \cite[Section 5.4]{humphreys1992reflection}, $\sigma \alpha_{i_k}<0$, contradicting that $\sigma \alpha_{i_k} = \alpha_{i_k}$.  
	\end{proof}
	\begin{corollary}
		If $\tilde{v} \in \Sigma_0$, then the group embedding $\iota$ intertwines the natural $\NamiWeyl(v,w)$-action and the Maffei's action of $W(v,w)$ on $H^2(\M_0^\theta(v,w),\C)$.
	\end{corollary}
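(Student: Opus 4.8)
The plan is to obtain this as an immediate consequence of \Cref{thm: Maffei and NW action of sj are same} and \Cref{cor: iota extends to group emb when v in Sigma0}, via the principle that a group homomorphism is determined by its values on a generating set. Since $\tilde{v}\in\Sigma_0$, part (1) of \Cref{theorem:W vs NW} gives $W(v,w)_{\Irr}=1$, so $W(v,w)=W(v,w)_{\Ess}$, which is by definition generated by $Gen_{\Ess}$. Hence it is enough to verify the intertwining relation $\iota(\sigma)\cdot x=\sigma\star x$ for all $x\in H^2(\M_0^\theta(v,w),\C)$ when $\sigma=s_j\in Gen_{\Ess}$.

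First I would record that both $\sigma\mapsto(\sigma\star)$ and $\sigma\mapsto\bigl(x\mapsto\iota(\sigma)\cdot x\bigr)$ are group homomorphisms $W(v,w)\to\GL\bigl(H^2(\M_0^\theta(v,w),\C)\bigr)$: the first by \Cref{thm: mff isomorphism and action}(2), whose multiplicativity ultimately comes from $\Phi_{\sigma_1\sigma_2}=\Phi_{\sigma_1}\circ\Phi_{\sigma_2}$ in \Cref{thm: mff isomorphism and action}(1); the second because $\iota$ is a group embedding into $\NamiWeyl(v,w)$ by \Cref{cor: iota extends to group emb when v in Sigma0}, and $\NamiWeyl(v,w)$ acts on $H^2(\M_0^\theta(v,w),\C)$ by the natural representation of \Cref{remark natural action NW on namikawacartan}. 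Then \Cref{thm: Maffei and NW action of sj are same} says precisely that these two homomorphisms agree on every generator $s_j\in Gen_{\Ess}$, so they agree on all of $W(v,w)$, which is the assertion.

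I do not anticipate any genuine obstacle here: the substantive content has already been established in \Cref{thm: Maffei and NW action of sj are same} (the explicit comparison of $s_j\star$ with the natural action of $\iota(s_j)$ on each piece $\cartanh_i$ and on $H^2(\M_0^0(v,w)^{\reg},\C)$ in the decomposition of \Cref{Ivan's decomposition of cartan space}) and in \Cref{cor: iota extends to group emb when v in Sigma0} (the faithfulness argument making $\iota$ an embedding). The only minor point to keep in mind is that the two actions being compared are left actions using the same convention, consistent with the identification of the symplectic Springer action with the natural $\NamiWeyl$-action in \Cref{remark springer equals natural on H2}, so that ``agreement on generators implies agreement everywhere'' is a legitimate deduction.
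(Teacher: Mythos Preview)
Your proposal is correct and follows essentially the same approach as the paper: the paper's proof is the one-liner ``This follows from \Cref{cor: iota extends to group emb when v in Sigma0} and \Cref{thm: Maffei and NW action of sj are same},'' and your argument simply unpacks this by noting that both $\sigma\mapsto \sigma\star$ and $\sigma\mapsto \iota(\sigma)$ are group homomorphisms into $\GL(H^2)$ which agree on the generating set $Gen_{\Ess}$.
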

	\begin{proof}
		This follows from \Cref{cor: iota extends to group emb when v in Sigma0} and \Cref{thm: Maffei and NW action of sj are same}.
	\end{proof}
	
	\subsection{More general dimension vectors}\label{sec: more general dimension vectors} 
	In this subsection we release the assumption $\tilde{v}\in \Sigma_0 $ and prove parts (2), (3) of \Cref{theorem:W vs NW} in full generality. 
	
	\begin{proof}[Proof of part (2) of \Cref{theorem:W vs NW}]
		It suffices to check that if $s_j$ lies in the set of generators \eqref{eq: generator of WvwIrr}, then $s_j\star$ is the identity automorphism. 
		
		So let $j$ be such a vertex. Let us assume $\theta >0$. 
		By the definition of $W(v,w)_{\Irr}$, inside $\overline{Q}$, the double quiver of $Q$, we have the following subquiver near $j$: 
		\begin{equation}    
			\begin{tikzpicture}[scale=0.9]
				\filldraw[black] (0,0) circle (1.5pt);
				\filldraw[black] (2,0) circle (1.5pt);
				\filldraw[black] (4,0) circle (1.5pt);
				\draw[->,thick] (.1,.1)--(1.9,0.1);
				\draw[->,thick] (1.9,-.1)--(.1,-0.1);
				\draw[->,thick] (2.1,.1)--(3.9,0.1);
				\draw[->,thick] (3.9,-.1)--(2.1,-0.1);
				\node at (-0.5,0) {$\cdots$};	
				\node at (4.5,0) {$\cdots$};
				\node at (0,-0.4) {$\alpha_{j-1}$};	
				\node at (2,-0.4) {$\alpha_{j}$};
				\node at (4,-0.4) {$\alpha_{j+1}$};
				\node at (0,0.4) {$1$};	
				\node at (2,0.4) {$1$};
				\node at (4,0.4) {$1$};
				\node at (1,-0.4) {$y_1$};
				\node at (1,0.4) {$x_1$};
				\node at (3,-0.4) {$y_2$};
				\node at (3,0.4) {$x_2$};
			\end{tikzpicture}
		\end{equation}
		where $x_1,x_2,y_1,y_2$ are the only arrows whose heads or tails are $j$, and the vertices of $Q$ to the left of $j-1$ and that to the right of $j+1$ are not connected to each other by any other arrows. 
		Without loss of generality, assume the extended vertex $\infty$ lies to the left of $j$. If $x_1 = 0$, then the sum of subspaces $V_k$, where $k$ lies to the left of $j$, is a proper subrepresentation of $\overline{Q}$ containing the image of the framing; if $x_2 = 0$, then the sum of subspaces $V_k$, where $k$ lies to the left of $j+1$, is such a proper subrepresentation of $\overline{Q}$. 
		Both contradict the $\theta$-semistability of $r$, by \cite[Theorem 10.32]{kirillov2016quiver}. Therefore, $x_1,x_2\neq 0$. 
		
		For any $\chi\in\ZZ^{Q_0}$, a character of $G$, write $\delta\chi = s_j^*\chi - \chi$. Then $(\delta\chi)_k = 0$ if $k\neq j$ and $j\pm 1$; $(\delta\chi)_j = 2\chi_j$, $(\delta\chi)_{j\pm 1} = -\chi_j$. Similarly to the proof of \Cref{thm: Maffei and NW action of sj are same}, the function $(x_1x_2\inverse)^{\chi_j}$ on $\mu\inverse(0)^{\theta-ss}$ descends to a nowhere vanishing global section of $\sheafO(\delta\chi,\M_0^\theta(v,w))$.
		It then follows from \Cref{proposition:si act on chern class} that the action $s_j\star$ is trivial. 
	\end{proof}
	
	Now let us consider the actions $s_j\star$ for $s_j\in Gen_{\Ess}$. We first generalize a half of \Cref{thm: Maffei and NW action of sj are same}. 
	\begin{proposition}\label{prop: sjstar and sj same action on cartanij under can decomp}
		Let $j\in\Supp v_i$ and $s_j\in Gen_{\Ess}$. Then the action $s_j\star$ and $\iota(s_j)$ coincide on the following subspace of $H^2(\M_0^\theta(v,w),\C)$ : 
		\begin{equation}
			\bigoplus_{\substack{v_i\in\Sigma_0\\ 1\le j \le  n_j}}\cartanh_{i,j} \oplus 
			\bigoplus_{\substack{v_i =m_i\delta_i \\m_i>1}} \cartanh_i 
			\oplus
			\bigoplus_{\substack{v_i =m_i\delta_i \\m_i>1}}\C.
		\end{equation}
		See \eqref{eq: ivans decomposition under canonical decomposition} for the notations.
		In particular, $s_j\star$ acts trivially on the last component. 
	\end{proposition}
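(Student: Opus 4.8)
The plan is to follow the strategy of the proof of \Cref{thm: Maffei and NW action of sj are same}: reduce to the tautological line bundles via the surjectivity of $\kappa$, and then compute the relevant projections of the classes $\kappa(\chi)$ with the analytic slice model of \Cref{remark: projection from H2 to cartanh i}, using that near a codimension $2$ leaf everything factors according to the canonical decomposition \Cref{prop: decompose tildev into a string}.

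First I would set up the reduction. By \Cref{theorem: kappa is chern class and surjective}(2) it suffices to compare $s_j\star\kappa(\chi)$ with $\iota(s_j)\kappa(\chi)$ for all $\chi\in\ZZ^{Q_0}$, and by \Cref{proposition:si act on chern class} we have $s_j\star\kappa(\chi)=\kappa(s_j^*\chi)$; since the summands in question in \eqref{eq: ivans decomposition under canonical decomposition} are exactly the spaces $\cartanh_\L$ attached to the codimension $2$ leaves $\L$ of \Cref{cor: leaves rep types and NW under canonical decomp}(1), it is enough to show $p_\L\kappa(s_j^*\chi)=p_\L(\iota(s_j)\kappa(\chi))$ for each such $\L$, where $p_\L$ is the corresponding projection. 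Using the analytic neighbourhood of \Cref{remark: projection from H2 to cartanh i} together with the product structure $\M_0^0(v,w)=\prod_\ell\M_0^0(v_\ell)$ — a point of $\L$ lies in $\L'\times\prod_{\ell\ne i'}\M_0^0(v_\ell)^{\reg}$ for a codimension $2$ leaf $\L'$ of a single factor $\M_0^0(v_{i'})$, with the same transverse slice — and the fact that $\sheafO(\chi,\M_0^\theta(v,w))$ restricts on such a neighbourhood to the external product of the tautological bundles of the factors (trivial over the smooth factors, whose polydisc neighbourhoods have vanishing $H^2$), I would show that $p_\L\kappa(\chi)$ depends only on $\chi|_{\Supp v_{i'}}$ and equals the projection to $\cartanh_{\L'}$ of the first Chern class of the tautological bundle of $\M_0^\theta(v_{i'})$ attached to $\chi|_{\Supp v_{i'}}$. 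This reduces the statement to the corresponding one for each individual factor.

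Two observations then handle the factors. Since $s_j\in Gen_{\Ess}$ with $j\in\Supp v_i$, the vertex $j$ cannot be joined by an arrow to a $\beta$-string or to a neighbouring factor: such a vertex $k$ of $\Supp v_i$ would satisfy $(\alpha_k,\tilde v)<0$, since the single arrow leaving $\Supp v_i$ at $k$ contributes a strictly negative term while $(\alpha_k,v_i)\le 0$ and all other contributions vanish. Hence every arrow incident to $j$ lies inside $\Supp v_i$, so $(s_j^*\chi)_k=\chi_k$ for all $k\in\Supp v_\ell$ with $\ell\ne i$; consequently $s_j\star$ is trivial on $\cartanh_\L$ for every leaf $\L$ coming from a factor other than $v_i$, matching that $\iota(s_j)\in\NamiWeyl(v_i)$ acts trivially on those factors. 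For the factor $v_i$ itself there are two cases. If $v_i\in\Sigma_0$ is imaginary, then $\M_0^\theta(v_i)$ (for the subquiver $\Supp v_i$) satisfies the hypotheses of \Cref{thm: Maffei and NW action of sj are same}, which gives $s_j\star=\iota(s_j)$ on all its $\cartanh$-summands, i.e.\ on all the $\cartanh_{i,j}$. If $v_i=m_i\delta_i$ with $m_i>1$, so that $\M_0^0(v_i)=S^{m_i}\M_0^0(\delta_i)$ with $\M_0^0(\delta_i)$ the Kleinian singularity of affine type $\Supp\delta_i$, then by the same local-section computation used in the proofs of \Cref{prop: tautological line bundle and maffei} and \Cref{thm: Maffei and NW action of sj are same} one shows $p_{\tilde\tau_i}\kappa(\chi)=(\chi_k)_{k\in\Supp\delta_i\setminus\{i_\infty\}}$ — the slice at the leaf with representation type $\tilde\tau_i$ being $\M_0^0(\delta_i)$, whose minimal resolution has exceptional curves indexed by the finite Dynkin diagram $\Supp\delta_i\setminus\{i_\infty\}$ — and $p_{\tilde\tau_{i,0}}\kappa(\chi)$ is a nonzero multiple of $\chi\cdot\delta_i$, the slice at $\tilde\tau_{i,0}$ being of type $A_1$. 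On the first, $s_j^*$ acts by $\chi_k\mapsto\chi_k-c_{jk}\chi_j$, which is precisely the reflection $\iota(s_j)\in W_i$; and since $j\in\Supp\delta_i$ forces $(\alpha_j,\delta_i)=0$, hence $s_j\delta_i=\delta_i$ and $s_j^*\chi\cdot\delta_i=\chi\cdot\delta_i$, the action $s_j\star$ is trivial on the $\C$ summand, as is $\iota(s_j)$ by \Cref{def: set theoretic iota}. This proves the proposition, including the final assertion.

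The main obstacle is the last case, $v_i=m_i\delta_i$: establishing the two Chern-class formulas for $S^{m_i}\M_0^0(\delta_i)$. Unlike in \Cref{prop: degree of taut bundle to hihat}, which assumes $\tilde v\in\Sigma_0$, here the transverse slice quiver is disconnected — the affine diagram $\Supp\delta_i$ together with an isolated vertex for the diagonal $\delta_i$-summands — and one must match the exceptional curves of the minimal resolution of the slice with the correct summands of $\tilde\tau_i$ and $\tilde\tau_{i,0}$, identify $i_\infty$ as the affine node of $\Supp\delta_i$, and check that the restricted tautological bundle has the asserted degrees (for $\tilde\tau_{i,0}$ only up to a nonzero scalar, which is all that is needed). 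A secondary technical point, already used in the reduction step, is the compatibility of the tautological bundles and of $\kappa$ with the canonical product decomposition in a neighbourhood of a codimension $2$ leaf, which requires unwinding the local structure of $\M_0^\theta(v,w)$ there.
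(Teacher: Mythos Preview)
Your strategy is correct and shares its core with the paper's: reduce to tautological classes via \Cref{theorem: kappa is chern class and surjective} and \Cref{proposition:si act on chern class}, then control the projections $p_\L\kappa(\chi)$ using the explicit representation types of the codimension~$2$ leaves and the observation that $\chi$ and $s_j^*\chi$ agree outside $\Supp v_i$ (your argument for this last point is exactly what the paper takes for granted). The affine observation $s_j^*\chi\cdot\delta_i=\chi\cdot s_j\delta_i=\chi\cdot\delta_i$ is also the same.

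The genuine difference is that you first localize to a product neighbourhood and reduce to each factor $\M_0^0(v_\ell)$, and only then invoke \Cref{thm: Maffei and NW action of sj are same} or compute directly in the affine case. The paper skips this reduction entirely: it works on the whole $\M_0^0(v,w)$, feeds the representation types $\tilde\tau_{i,j},\tilde\tau_i,\tilde\tau_{i,0}$ of \Cref{cor: leaves rep types and NW under canonical decomp} straight into the formula $p_\L\kappa(\chi)=(\chi\cdot\beta_2,\ldots)$ of \Cref{prop: degree of taut bundle to hihat}, and then repeats verbatim the computation in the first half of the proof of \Cref{thm: Maffei and NW action of sj are same}. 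Every component $\beta$ of those representation types is either supported outside $\Supp v_i$ (so $\chi\cdot\beta=s_j^*\chi\cdot\beta$), equal to $\delta_i$ or $v_i$ (same conclusion, since $s_j$ fixes these), or a simple root inside $\Supp v_i$ (handled exactly as in \Cref{thm: Maffei and NW action of sj are same}). This bypasses both obstacles you flag: there is no need to match tautological bundles across the product decomposition, and no separate Chern-class computation for $S^{m_i}\M_0^0(\delta_i)$. What your route buys is that you never implicitly extend \Cref{prop: degree of taut bundle to hihat} beyond its stated hypothesis $\tilde v\in\Sigma_0$; the paper's three-line proof does rely on that extension holding for $\tilde v\in\F_{Q^\infty}$, which it treats as evident from \cite{wu2023namikawa}.
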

	\begin{proof}
		Note that for any $\chi \in \ZZ^{Q_0}$, the characters $\chi$ and $s_j^*\chi$ are identical outside $\Supp v_i$. 
		Moreover, if $v_i = m_i\delta_i$, observe that for any $j\in \Supp \delta_i$ such that $s_j\in Gen_{\Ess}$, we have $s_j^*\chi \cdot \delta_i = \chi\cdot s_j \delta_i= \chi\cdot \delta_i$.  
		The proposition now follows from the classification of representation types in (a) of \Cref{cor: leaves rep types and NW under canonical decomp}, and the first half of the proof of \Cref{thm: Maffei and NW action of sj are same}. 
	\end{proof}

	Let us turn to the regular locus of $\M_0^0(v,w)$. The following result will be useful in the computation of $H^2(\M_0^0(v,w)^{\reg},\C)$ and is of independent interest.
	\begin{proposition}\label{prop:H1 of Xreg is zero}
		If $\pi: Y\to X$ is a conical symplectic resolution of conical symplectic singularities, then $H^1(X^{\reg},\C) = 0$.
	\end{proposition}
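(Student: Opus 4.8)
The plan is to transport the question to the symplectic resolution $Y$, where $H^1$ vanishes and $H^2$ is well understood, by means of the long exact sequence of the pair $(Y,\pi\inverse(X^{\reg}))$.

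First one may assume that $X$ is singular: otherwise $X^{\reg}=X$ is a smooth affine variety carrying a contracting $\C^*$-action, hence is isomorphic to affine space, and $H^1(X^{\reg},\C)=0$ trivially. Recall next that a symplectic resolution restricts to an isomorphism over the smooth locus of its base; for instance, $\pi$ is crepant, so it has no exceptional divisor lying over $X^{\reg}$, and a small proper birational morphism onto a smooth variety is an isomorphism. Set $U:=\pi\inverse(X^{\reg})$, an open subvariety of $Y$ which $\pi$ identifies with $X^{\reg}$, and $E:=Y\setminus U=\pi\inverse(\operatorname{Sing}X)$, a proper closed subvariety of the smooth irreducible variety $Y$, so $\dim_\C E\le\dim_\C Y-1$. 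Since $Y$ is a conical symplectic resolution it is simply connected, so $H^1(Y,\C)=0$. The cohomology-with-supports exact sequence of $E\subseteq Y$ reads
\[
H^1(Y,\C)\longrightarrow H^1(U,\C)\longrightarrow H^2_E(Y,\C)\longrightarrow H^2(Y,\C),
\]
and since $H^1(Y,\C)=0$ it identifies $H^1(X^{\reg},\C)=H^1(U,\C)$ with the kernel of $H^2_E(Y,\C)\to H^2(Y,\C)$.

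It thus remains to understand this map. Write $2n=\dim_\C Y$. Borel--Moore duality on the smooth variety $Y$ gives $H^2_E(Y,\C)\cong H^{BM}_{4n-2}(E,\C)$, which, because $\dim_\C E\le 2n-1$, is the vector space freely spanned by the fundamental classes of those irreducible components $E_1,\dots,E_s$ of $E$ that are divisors in $Y$; under this identification together with $H^2(Y,\C)\cong H^{BM}_{4n-2}(Y,\C)$, the map $H^2_E(Y,\C)\to H^2(Y,\C)$ becomes Borel--Moore pushforward and sends $[E_j]$ to its cycle class in $H^2(Y,\C)$. So the proposition reduces to the claim that the classes $[E_1],\dots,[E_s]$ are linearly independent in $H^2(Y,\C)$.

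This is precisely the negativity lemma. Suppose $\sum_j a_j[E_j]=0$ in $H^2(Y,\C)$, and set $B:=\sum_j a_jE_j$, a Cartier divisor since $Y$ is smooth. Then $B$ is numerically trivial, so $-B$ is $\pi$-nef; and $B$ is $\pi$-exceptional, since each $E_j$ lies over $\operatorname{Sing}X$, which has codimension $\ge 2$ in the normal variety $X$. The negativity lemma applied to $-B$ gives $B\ge 0$, i.e.\ all $a_j\ge 0$, and applied to $B$ gives all $a_j\le 0$; hence $B=0$ and all $a_j=0$. Therefore the kernel above is trivial and $H^1(X^{\reg},\C)=0$.

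The steps that require genuine care are the structural facts about $Y$ invoked in the first paragraph --- that $\pi$ is an isomorphism over $X^{\reg}$ and that $H^1(Y,\C)=0$ --- and, in the second paragraph, the identification of the connecting homomorphism $H^2_E(Y,\C)\to H^2(Y,\C)$ with the cycle class map on the divisorial components of $E$; once these are established, the negativity lemma finishes the proof with no further computation.
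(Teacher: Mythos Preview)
Your argument is correct and begins with the same long exact sequence as the paper, but diverges at the crucial step of proving that the map $H^{BM}_{4n-2}(E,\C)\to H^2(Y,\C)$ is injective. The paper proceeds by a dimension count: invoking Losev's decomposition $H^2(Y,\C)=H^2(X^{\reg},\C)\oplus\bigoplus_i\cartanh_i$, Namikawa's identification of the divisorial components of $\pi\inverse(\L_i)$ with the simple roots in $\hat\cartanh_i^*$, and Kaledin's semismallness to rule out contributions from deeper strata, it pins down $\dim H^{BM}_{4n-2}(E,\C)=\sum_i\dim\cartanh_i$, which equals the dimension of the kernel of $H^2(Y,\C)\to H^2(X^{\reg},\C)$ and thereby forces injectivity. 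Your route through the negativity lemma bypasses all of this structure theory: it uses only that $X$ is normal and that the $E_j$ are $\pi$-exceptional, so it would apply verbatim to any proper birational morphism from a smooth variety with vanishing $H^1$ to a normal base. The paper's approach, while heavier, has the virtue of making the link to the Namikawa--Cartan decomposition explicit, which is thematically central to the rest of the paper. One small remark: you appeal to simple connectedness of $Y$, which is stronger than needed and not entirely obvious; the paper instead cites the vanishing of odd cohomology for conical symplectic resolutions \cite[Proposition~2.5]{braden2016conical}, which gives $H^1(Y,\C)=0$ directly.
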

	\begin{proof}
		Suppose $X,Y$ have complex dimension $d$. 
		Let $E\subset Y$ be the exceptional locus, so that $Y\setminus E \cong X^{\reg}$. The long exact sequence of Borel-Moore homology for $Y,E$ and $Y\setminus E$ implies 
		\begin{equation}
			\cdots  H^1(Y,\C) \to H^1(X^{\reg},\C) \to H_{2d-2}(E,\C) \to  H^2(Y,\C) \to H^2(X^{\reg},\C) \cdots
		\end{equation}
		where we have used the Poincare duality and the fact that $Y,X^{\reg}$ are smooth. 
		By \cite[Proposition 2.5]{braden2016conical}, the odd cohomology spaces of a conical symplectic resolution vanish, so $H^1(Y,\C)=0$. 	
		By \Cref{Ivan's decomposition of cartan space}, we see $H^2(Y,\C) \to H^2(X^{\reg},\C)$ is surjective and the kernel has dimension equal to $\sum \dim \cartanh_i$. 
		By \cite[Section 4.1]{namikawa2011poisson}, if $\L_i$ is a codimension 2 symplectic leaf of $X$, then the number of (codimension 1 in $Y$) irreducible components of $\pi\inverse(\L_i)$ is precisely $\dim\cartanh_i$. 	
		By \cite[Lemma 2.11]{kaledin2006symplectic}, $\pi$ is semismall, so the preimages under $\pi$ of the symplectic leaves of $X$ with codimension $\ge 4$ cannot contribute to $H_{2d-2}(E,\C)$. 
		
		Therefore, $\dim H_{2d-2}(E,\C) = \sum\dim \cartanh_i$, which forces $H_{2d-2}(E,\C) \to  H^2(Y,\C)$ to be an injection. It follows that $H^1(X^{\reg},\C) = 0$. 
	\end{proof}
	\begin{corollary}\label{cor: Mreg has no H1}
		If $v_i\in\Sigma_0$, then $H^1(\M_0^0(v_i)^{\reg},\C) = 0$.
	\end{corollary}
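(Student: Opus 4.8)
The plan is to obtain \Cref{cor: Mreg has no H1} as an immediate specialization of \Cref{prop:H1 of Xreg is zero}; the only thing to verify is that $X := \M_0^0(v_i)$ satisfies that proposition's hypotheses. First I would note that $X$ is a quiver variety with trivial framing, hence a normal affine Poisson variety. Since $v_i\in\Sigma_0$ we have $v_i\in\F_{Q^\infty}$, so \cite[Theorem 1.1]{su2006flatness} gives that the moment map $\mu$ is flat; then item (4) of the list of known results in \Cref{subsection:quiver varieties intro} (using \cite[Proposition 2.5]{bezrukavnikov2021etingof}) shows that for any generic $\theta$ the projective morphism $\rho\colon\M_0^\theta(v_i)\to\M_0^0(v_i)$ is a conical symplectic resolution of singularities: $\M_0^\theta(v_i)$ is smooth because $(\theta,0)$ is generic, and both spaces carry the contracting $\C^*$-action descending from $T^*R$, under which the symplectic form has positive weight. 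Alternatively one may cite \cite[Theorem 1.5]{bellamy2021symplectic} both for the symplectic-singularity property of $X$ and for the existence of a conical symplectic resolution.

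Having produced such a resolution $\rho$, I would simply apply \Cref{prop:H1 of Xreg is zero} with $Y = \M_0^\theta(v_i)$ and $X = \M_0^0(v_i)$ to conclude $H^1(\M_0^0(v_i)^{\reg},\C) = 0$, which is precisely the claim. In the degenerate case where $v_i$ is a simple real root, $X$ is a point and the statement is vacuous, so this case needs no separate treatment.

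I do not expect any genuine obstacle here: every ingredient — flatness of $\mu$ on the fundamental region, genericity of $\theta$ forcing smoothness, the conical structure inherited from $T^*R$, and the fact that affine quiver varieties of this shape are conical symplectic singularities — has already been recorded earlier in the paper, so the corollary really is just the preceding proposition applied to this particular pair $(X,Y)$.
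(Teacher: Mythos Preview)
Your strategy---verify that $\M_0^0(v_i)$ admits a conical symplectic resolution and then invoke \Cref{prop:H1 of Xreg is zero}---is exactly the paper's, but you skip the one step that actually requires care. The variety $\M_0^0(v_i)$ is an \emph{unframed} quiver variety on $Q^\infty$: the diagonal $\C^*\subset\GL(v_i)$ acts trivially on $T^*R$, so for the semistable locus to be nonempty one must have $\theta\cdot v_i=0$. With the paper's definition of genericity (which requires $\theta\cdot v'\neq 0$ for every $0<v'\le v_i$, in particular for $v'=v_i$), no such $\theta$ is generic, and item~(4) from \Cref{subsection:quiver varieties intro}---stated for framed varieties---does not apply as written. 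Thus ``$\M_0^\theta(v_i)$ is smooth because $(\theta,0)$ is generic'' is not justified by what you cite.

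The paper closes this gap with a single extra observation: by the construction of the canonical decomposition, every $v_i$ has some coordinate $(v_i)_k=1$ (the vertex at which the string was cut, or $\infty$ itself). One then reinterprets $\M_0^0(v_i)$ as a \emph{framed} quiver variety with the vertex $k$ playing the role of the framing node; now generic $\theta$ exists, \cite[Theorem~1.5]{bellamy2021symplectic} (and the remarks following it) give that $\M_0^\theta(v_i)\to\M_0^0(v_i)$ is a conical symplectic resolution, and \Cref{prop:H1 of Xreg is zero} finishes the job. Your ``alternative'' citation of Bellamy--Schedler is therefore the right move, but it too needs the $(v_i)_k=1$ input to place $\M_0^0(v_i)$ in the framed setting where that theorem is stated. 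Add that sentence and your argument coincides with the paper's.
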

	\begin{proof}
		By the construction of the canonical decomposition, each $v_i$ has some component $(v_i)_k $ equal to $ 1$. Therefore, by \Cite[Theorem 1.5]{bellamy2021symplectic} and the subsequent remarks,
		$\M_0^\theta(v_i)\to \M_0^0(v_i)$ is a conical symplectic resolution for generic $\theta$. Now use \Cref{prop:H1 of Xreg is zero}. 
	\end{proof}
	\begin{proposition}\label{prop: quotient singularity has no H1 and H2}
		If $\delta$ is the minimal imaginary root supported on an affine type quiver, $m\ge 2$, then
		\begin{equation}
			H^1(\M_0^0(m\delta)^{\reg},\C) = H^2(\M_0^0(m\delta)^{\reg},\C) = 0.
		\end{equation} 
	\end{proposition}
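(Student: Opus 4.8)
The plan is to present $X := \M_0^0(m\delta)$ as a symplectic quotient singularity and exploit the fact that the smooth locus of such a quotient is precisely the image of its free locus. Recall that $\M_0^0(\delta)$ is the Kleinian singularity $\C^2/\Gamma$ for a finite subgroup $\Gamma\subset\SL_2(\C)$; since $\M_0^0(m\delta) = S^m\M_0^0(\delta)$ (see \Cref{CB canonical decomposition}), we get
\begin{equation}
	X = S^m(\C^2/\Gamma) = V/G, \qquad V = (\C^2)^{\oplus m}\cong\C^{2m},\quad G = \Gamma\wr S_m = \Gamma^m\rtimes S_m,
\end{equation}
where $G$ acts $\C$-linearly and preserves the product symplectic form, so that $G\subseteq\Sp(V)$ (each factor $\Gamma$ lies in $\SL_2=\Sp_2$, and $S_m$ permutes the symplectic summands).

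The first step is to describe $X^{\reg}$ explicitly. Every element of $\Sp(V)$ has an even-dimensional fixed subspace, so no nonidentity element of $G$ is a complex pseudoreflection. By the Chevalley--Shephard--Todd theorem applied to Luna slices, the singular locus of $V/G$ is exactly the image of the non-free locus, and hence
\begin{equation}
	X^{\reg} = U/G, \qquad U := V\setminus\bigcup_{g\in G\setminus\{1\}} V^g,
\end{equation}
with $G$ acting \emph{freely} on the $G$-stable open subset $U$. The second step is to compute the low-degree topology of $U$: for $g\neq 1$ the fixed space $V^g$ is a proper linear subspace of even complex dimension, hence $\codim_\C V^g\geq 2$, i.e.\ its real codimension is at least $4$. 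Thus $U$ is the complement in the affine space $V$ of a finite union of linear subspaces of real codimension $\geq 4$, so by transversality the inclusion $U\hookrightarrow V$ induces isomorphisms on $\pi_i$ for $i\leq 2$; in particular $\pi_1(U)=\pi_2(U)=0$, and by Hurewicz and universal coefficients $H^1(U,\C)=H^2(U,\C)=0$.

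Finally, $U\to X^{\reg}$ is a finite Galois covering with deck group $G$, so the Cartan--Leray spectral sequence with $\C$-coefficients degenerates (positive-degree cohomology of the finite group $G$ with coefficients in a $\C$-vector space vanishes) and yields $H^i(X^{\reg},\C)=H^i(U,\C)^G$ for all $i$; combined with the previous paragraph this gives $H^1(X^{\reg},\C)=H^2(X^{\reg},\C)=0$. As an alternative route, the vanishing of $H^1$ also follows from \Cref{prop:H1 of Xreg is zero}, and $H^2(X^{\reg},\C)=0$ can be obtained from \Cref{Ivan's decomposition of cartan space} by matching $\dim H^2$ of a symplectic resolution of $X$ against $\sum_i\dim\cartanh_i$ as computed in \Cref{cor: leaves rep types and NW under canonical decomp}. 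I expect the only delicate point to be the identification $X^{\reg}=U/G$: it rests on the observation that $G\subseteq\Sp(V)$ contains no pseudoreflections, so that Chevalley--Shephard--Todd forces the smooth points of $V/G$ to be exactly the images of points with trivial stabilizer.
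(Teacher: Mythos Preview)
Your proof is correct and follows essentially the same approach as the paper: identify $\M_0^0(m\delta)$ with $\C^{2m}/(\Gamma\wr S_m)$, recognize its regular locus as the free quotient of the complement of the (complex codimension $\geq 2$) fixed loci, compute the low-degree cohomology of this complement, and descend to the quotient. The paper obtains the vanishing for the complement via the Borel--Moore long exact sequence rather than your transversality/Hurewicz argument, and is terser about justifying $X^{\reg}=U/G$ and the passage to $G$-invariants, but the strategy is the same.
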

	\begin{proof}
		It is well known that $\M_0^0(m\delta) = \C^{2m}/\Gamma_m$, where $\Gamma_m = \Gamma^{\times m} \rtimes S_m$, $\Gamma$ is the finite subgroup of $\SL(2,\C)$ corresponding to the type of $\delta$. We can thus identify $\M_0^0(m\delta)^{\reg} = (\C^{2m} \setminus Z) / \Gamma_m$, where $Z$ is a union of complex codimension 2 subspaces of $\C^{2m}$, and $\Gamma_m$ acts freely on $\C^{2m} \setminus Z$. The proposition follows from the long exact sequence in Borel-Moore homology for $\C^{2m}, Z, \C^{2m}\setminus Z$ and dimension reasons.  
	\end{proof}
	\begin{corollary}\label{cor: H2reg is sum of H2reg of sigma0 parts}
		Let $\M_0^0(v,w) = \prod_{i=1}^k \M_0^0(v_i)$ be the canonical decomposition of $\M_0^0(v,w)$ in \Cref{prop: decompose tildev into a string}. Then 
		\begin{equation}
			H^2(\M_0^0(v,w)^{\reg},\C) = \bigoplus_{v_i\in\Sigma_0} H^2(\M_0^0(v_i)^{\reg},\C).
		\end{equation}
	\end{corollary}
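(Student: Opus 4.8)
The plan is to express $\M_0^0(v,w)^{\reg}$ as a product of the regular loci of the factors in the canonical decomposition of \Cref{prop: decompose tildev into a string}, run the Künneth formula, and then annihilate all unwanted terms with the vanishing statements \Cref{cor: Mreg has no H1} and \Cref{prop: quotient singularity has no H1 and H2}. First I would note that for irreducible varieties $X_1,\dots,X_k$ over $\C$ the smooth locus of the product is the product of the smooth loci, $\big(\prod_i X_i\big)^{\reg} = \prod_i X_i^{\reg}$, since $\sheafO_{\prod X_i,(x_1,\dots,x_k)}$ is a regular local ring precisely when each $\sheafO_{X_i,x_i}$ is (geometric regularity over the perfect field $\C$). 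Applying this to $\M_0^0(v,w) = \prod_{i=1}^k \M_0^0(v_i)$ gives $\M_0^0(v,w)^{\reg} = \prod_{i=1}^k \M_0^0(v_i)^{\reg}$, a product of smooth connected varieties, so $H^0(\M_0^0(v_i)^{\reg},\C) = \C$ for each $i$.

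Next I would apply the Künneth formula with $\C$-coefficients in cohomological degree $2$ to obtain
\[
H^2(\M_0^0(v,w)^{\reg},\C) \;\cong\; \bigoplus_{i=1}^k H^2(\M_0^0(v_i)^{\reg},\C) \;\oplus\; \bigoplus_{1\le i<j\le k} H^1(\M_0^0(v_i)^{\reg},\C)\otimes_\C H^1(\M_0^0(v_j)^{\reg},\C).
\]
The cross terms vanish because $H^1(\M_0^0(v_i)^{\reg},\C) = 0$ for every $i$: if $v_i$ is a simple real root the factor is a point; if $v_i = m_i\delta_i$ with $m_i\ge 2$ this is \Cref{prop: quotient singularity has no H1 and H2}; and if $v_i\in\Sigma_0$ is imaginary this is \Cref{cor: Mreg has no H1}. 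In the remaining sum, the summand attached to a simple real root is $H^2(\mathrm{pt},\C)=0$, and the one attached to $v_i=m_i\delta_i$ with $m_i\ge 2$ vanishes again by \Cref{prop: quotient singularity has no H1 and H2}; only the terms with $v_i\in\Sigma_0$ imaginary survive, which is exactly the claimed identity. (Note that the identification is even canonical, induced by the Künneth projections.)

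I do not expect a genuine obstacle: the argument is purely formal given the earlier vanishing results, and the only point requiring care is distinguishing the three types of factors that can occur in the canonical decomposition — point factors from simple real roots, symmetric-power quotients $\M_0^0(m_i\delta_i)$, and the genuine $\Sigma_0$-pieces — which is handled above.
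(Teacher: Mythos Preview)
Your argument is correct and follows essentially the same route as the paper's proof, which simply cites \Cref{cor: Mreg has no H1}, \Cref{prop: quotient singularity has no H1 and H2}, and the K\"unneth formula. One minor remark: in the decomposition of \Cref{prop: decompose tildev into a string} the factors $v_i$ are always imaginary (either in $\Sigma_0$ or of the form $m_i\delta_i$), so the ``simple real root'' case you handle does not actually arise here --- though treating it does no harm.
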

	\begin{proof}
		This follows from \Cref{cor: Mreg has no H1}, \Cref{prop: quotient singularity has no H1 and H2} and the Kunneth formula. 
	\end{proof}
	\begin{proposition}\label{prop: sj ess trivial action on H2reg}
		If $s_j\in Gen_{\Ess}$, then $s_j\star$ acts trivially on $H^2(\M_0^0(v,w)^{\reg},\C) $. 
	\end{proposition}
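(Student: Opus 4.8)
The plan is to extend the $p_{\reg}$ part of the proof of \Cref{thm: Maffei and NW action of sj are same} from $\tilde v\in\Sigma_0$ to a general $\tilde v\in\F_{Q^\infty}$. By the surjectivity of $\kappa$ (\Cref{theorem: kappa is chern class and surjective}) and the identity $\sigma\star\kappa(\chi)=\kappa(\sigma^*\chi)$ (\Cref{proposition:si act on chern class}), it is enough to show that for every character $\chi$ of $G$ the class $\kappa(\delta\chi)$, where $\delta\chi:=\chi-s_j^*\chi$ so that $(\delta\chi)_k=c_{jk}\chi_j$, has trivial image under $p_{\reg}$; equivalently, that $\sheafO(\delta\chi,\M_0^\theta(v,w))$ restricts to a trivial line bundle on $\rho\inverse(\M_0^0(v,w)^{\reg})$. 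Since $\rho$ is a symplectic resolution it is an isomorphism over the smooth locus, so this is a statement about an honest line bundle on $\M_0^0(v,w)^{\reg}$; and since $\M_0^0(v,w)^{\reg}$ contains the open symplectic leaf $U$ as a dense open subset whose complement has complex codimension $\ge 2$, the restriction $H^2(\M_0^0(v,w)^{\reg},\C)\to H^2(U,\C)$ is injective, so it suffices to trivialize the bundle over $\rho\inverse(U)$. Over $\rho\inverse(U)$ every representative $r\in\mu\inverse(0)^{\theta-ss}$ has semisimplification $r^{ss}$ of the generic representation type.

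First I would record the shape of $\delta\chi$. Because $s_j\in Gen_{\Ess}$ sits over a vertex $j\in\Supp v_i$ of a piece $v_i$ of the canonical decomposition of \Cref{prop: decompose tildev into a string} --- necessarily an imaginary root in $\Sigma_0$, or of the form $m_i\delta_i$ --- the Lemma preceding \Cref{def: set theoretic iota} shows that $j$ is not a connecting vertex (otherwise $(\alpha_j,\tilde v)<0$ and $s_j\notin Gen_{\Ess}$), so all neighbours of $j$ in $Q^\infty$ lie in $\Supp v_i$, and $\delta\chi$ is supported inside $\Supp v_i$. Next, reorienting $Q$ so that $j$ is a source, the arrows of $\overline{Q^\infty}$ with tail $j$ assemble into $A_j\colon V_j\to T_j$ and those with head $j$ into $B_j\colon T_j\to V_j$, where $T_j=\bigoplus_{t(a)=j}V_{h(a)}$; the moment map equation at $j$ gives $B_jA_j=0$, and $\dim T_j=2\dim V_j$ since $s_j\tilde v=\tilde v$.

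The heart of the matter is the exactness of $0\to V_j\xrightarrow{A_j}T_j\xrightarrow{B_j}V_j\to 0$ over $\rho\inverse(U)$. Injectivity of $A_j$ holds on all of $\M_0^\theta(v,w)$: the triple $(\ker A_j,0,\dots,0)$ is a subrepresentation with vanishing $\infty$-component, so $\theta$-stability of any representative forces it to vanish. For surjectivity of $B_j$: over $\rho\inverse(U)$ the semisimplification $r^{ss}$ is a sum of simple $\overline{Q^\infty}$-representations, and the canonical decomposition forces every simple summand $s$ with $s_j\neq 0$ to have support strictly larger than $\{j\}$ (such an $s$ is the summand attached to $v_i$, or a copy of $\delta_i$, whose support is $\Supp v_i$, resp.\ $\Supp\delta_i$, of size at least $2$). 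For such an $s$, the subspace equal to $\im B_j(s)$ at $j$ and to $V_k(s)$ at every $k\neq j$ is a nonzero subrepresentation of the simple $s$, hence equals $s$, so $B_j(s)$ is surjective; therefore $B_j(r^{ss})$ is surjective, and since $\rank B_j$ is $G$-invariant and lower semicontinuous and $r^{ss}\in\overline{G\cdot r}$, also $B_j(r)$ is surjective. With exactness established, I would conclude as in \Cref{thm: Maffei and NW action of sj are same}: fixing a reference exact sequence $0\to V_j\xrightarrow{A_{j,0}}T_j\xrightarrow{B_{j,0}}V_j\to 0$ and $g\in\GL(T_j)$ with $\im A_j=g\,\im A_{j,0}$, the induced automorphisms $\alpha_j,\beta_j$ of $V_j$ give $F(r)=\det(g)\inverse\det(\alpha_j)\det(\beta_j)$, and $r\mapsto(r,F(r)^{\chi_j})$ is $G$-equivariant for the character $\delta\chi$ and descends to a nowhere vanishing section of $\sheafO(\delta\chi,\M_0^\theta(v,w))$ on $\rho\inverse(U)$.

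I expect the surjectivity of $B_j$ over $U$ to be the main obstacle: in the $\tilde v\in\Sigma_0$ case one simply invokes \cite[Lemma 32]{maffei2002remark} because representatives over the regular locus are simple objects, whereas here they are only $\theta$-stable, so one must pass to the semisimplification, use the canonical decomposition to identify which simple summands meet $j$, and rule out the one-dimensional representation $\alpha_j$ occurring as a summand --- which is exactly where the hypothesis that $j$ is not a connecting vertex enters. The reduction to the open leaf and the semicontinuity argument are routine but should be spelled out.
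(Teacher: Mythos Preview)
Your argument is correct and takes a somewhat different route from the paper's. The paper first invokes \Cref{cor: H2reg is sum of H2reg of sigma0 parts} (which in turn rests on \Cref{prop:H1 of Xreg is zero} and \Cref{prop: quotient singularity has no H1 and H2}) to reduce to trivializing $\sheafO(\delta\chi,\M_0^\theta(v,w))$ on each factor $\M_0^0(v_i)^{\reg}$ with $v_i\in\Sigma_0$ separately, and then proves the stronger structural claim that for any $r$ lying over $\M_0^0(v,w)^{\reg}$ the restriction of $r$ to $\Supp v_i$ is already \emph{simple} as a $\overline{Q}|_{\Supp v_i}$-representation; simplicity gives stability for both $\theta$ and $s_j^*\theta$, so Maffei's Lemma~32 applies verbatim as in \Cref{thm: Maffei and NW action of sj are same}. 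You bypass the K\"unneth reduction entirely and work directly on $\M_0^0(v,w)^{\reg}$, establishing exactness of $0\to V_j\to T_j\to V_j\to 0$ by combining $\theta$-stability (for $A_j$ injective, tacitly using $\theta_j>0$ as in \Cref{subsec: construction of Maffei iso}) with a semisimplification-plus-semicontinuity argument (for $B_j$ surjective). Your approach is more self-contained --- it does not need \Cref{prop:H1 of Xreg is zero}, \Cref{prop: quotient singularity has no H1 and H2}, or \Cref{cor: H2reg is sum of H2reg of sigma0 parts} --- and treats the $v_i\in\Sigma_0$ and $v_i=m_i\delta_i$ cases uniformly; the paper's approach extracts a cleaner intermediate statement (simplicity of the restricted representation) that makes the reduction to the $\tilde v\in\Sigma_0$ case literal rather than merely analogous.

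One small remark: your preliminary reduction from $\M_0^0(v,w)^{\reg}$ to the open leaf $U$ is vacuous, since for a conical symplectic singularity the smooth locus \emph{is} the open symplectic leaf (by Kaledin's local product decomposition, a point on a lower-dimensional leaf has a positive-dimensional transverse slice with an isolated $0$-leaf, hence is singular). This does no harm, but you can drop that step and work on $\M_0^0(v,w)^{\reg}$ throughout.
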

	\begin{proof}
		Let $\chi\in\ZZ^{Q_0}$ and $\delta\chi = s_j^*\chi - \chi$.  
		Thanks to \Cref{cor: H2reg is sum of H2reg of sigma0 parts}, it suffices to show that the restriction of $\sheafO(\delta\chi,\M_0^\theta(v,w))$ to each $\M_0^0(v_i)^{\reg}$ is trivial, where $v_i\in\Sigma_0$. Recall for $r\in\mu\inverse(0)^{\theta-ss}$, we denote $[r]^\theta$ to be its image in $\M_0^\theta(v,w)$. Assume $\rho([r]^\theta) \in \M_0^0(v,w)^{\reg}$. 
		
		\textbf{Claim.} The restriction of $r$ to any $\Supp v_i$ is simple as a representation of $Q|_{\Supp v_i}$. 
		
		Once the claim is proved, we can apply the constructions in the second half of the proof of \Cref{thm: Maffei and NW action of sj are same} to produce a nowhere vanishing section of $\sheafO(\delta\chi,\M_0^\theta(v,w))$ on $\M_0^0(v_i)^{\reg}$, and the proposition is proved. 
		
		Let us prove the claim. Since $\rho([r]^\theta) = [r^{ss}]^0$, there is a filtration of the $\overline{Q}$-representation $r$, part of which is 
		\begin{equation}
			\cdots \subset R_n \subset R_{n+1} \subset\cdots 
		\end{equation}
		such that $R_{n+1}/R_n$ is a simple representation of $\overline{Q}$ with dimension $v_i$. To prove the claim, it suffices to show $R_{n+1}/R_n$ is also simple as a representation of $\overline{Q}|_{\Supp v_i}$. 
		
		Suppose not; then we have a chain $R_n\subsetneq S \subsetneq R_{n+1}$ of $\overline{Q}|_{\Supp v_i}$ representations.
		Let $i_{conn}$ be any vertex $\Supp v_i$ that connects to a vertex outside $\Supp v_i$. By the construction, $V_{i_{conn}}$ is one-dimensional. 
		If $S$ does not contain any such  $V_{i_{conn}}$, then $S$ is already a $Q$-representation, contradiction. 
		Suppose now $S$ contains such a $V_{i_{conn}}$. Since $\dim V_{i_{conn}} = 1$, the subspace of $R_{n+1}$ generated by $S$ under the $\overline{Q}$-action is the same as that as a $\overline{Q}|_{\Supp v_i}$-action, and $S$ is again a $\overline{Q}$-representation, contradiction. The claim, and hence the proposition, are proved. 
	\end{proof}
	\begin{proof}[Proof of (3) of \Cref{theorem:W vs NW}]
		This part of the theorem now follows from \Cref{prop: sjstar and sj same action on cartanij under can decomp} and \Cref{prop: sj ess trivial action on H2reg}. 
	\end{proof}
	
	\section{Coincidence of the two actions}\label{wrap up part 4}
	Let $\sigma\in W(v,w)$ and set $\gamma = (1\times\iota)(\sigma)\in \NamiWeyl(v,w)$. Recall $\mathfrak{p} = \C^{Q_0}$ and $B_Y = H^2(\M_0^\theta(v,w),\C)$.  We have the following commutative diagrams whose vertical arrows are isomorphisms: 
	\begin{equation}
		\begin{tikzcd}
			\mathfrak{p} \arrow[r, "\kappa"] \arrow[d, "\sigma"] & B_Y \arrow[d, "\gamma"] \\ 
			\mathfrak{p} \arrow[r, "\kappa"]  & B_Y
		\end{tikzcd} \hspace{0.5in}
		\begin{tikzcd}
			\mathfrak{p^\circ} \arrow[r, "\kappa"] \arrow[d, "\sigma"] & B_Y^\circ \arrow[d, "\gamma"] \\ 
			\mathfrak{p^\circ} \arrow[r, "\kappa"]  & B_Y^\circ
		\end{tikzcd}
	\end{equation}
	where the left diagram follows from part (3) of \Cref{theorem:W vs NW}, and the right diagram follows from the left by restriction. 
	\begin{proof}[Proof of (4) of \Cref{theorem:W vs NW}]
		Let $\lambda\in\mathfrak{p}^\circ$ be generic, and consider the following diagram. 
		\begin{equation}
			\begin{tikzcd}[sep=scriptsize]
				\M_{\sigma^*\lambda}^\theta(v,w) \arrow[r,equal]& {\M_{\sigma^*\lambda}^{\sigma^*\theta}(v,w)} && {\M^{\sigma^*\theta}_{\mathfrak{p}^{\circ}} (v,w)} && {\mathcal{Y}^{\circ}} \\
				{\M_{\lambda}^\theta(v,w)} && {\M^{ \theta}_{\mathfrak{p}^{\circ} }(v,w)} && {\mathcal{Y}^{\circ}} \\
				& {\sigma^*\lambda} && {\mathfrak{p}^{\circ}} && {B_Y^{\circ}} \\
				\lambda && {\mathfrak{p}^{\circ}} && {B_Y^{\circ}}
				\arrow["\kappa", from=4-3, to=4-5]
				\arrow["{\gamma}"', from=4-5, to=3-6]
				\arrow["\kappa"{pos=0.3}, from=3-4, to=3-6]
				\arrow["\sigma^*", from=4-3, to=3-4]
				\arrow["\mu"'{pos=0.6}, from=2-3, to=4-3]
				\arrow["{{ }}", from=1-4, to=3-4]
				\arrow["{\Psi_{\gamma}}"{pos=0.3},"\sim"', from=2-5, to=1-6]
				\arrow["K"{pos=0.4}, from=2-3, to=2-5]
				\arrow["{m_Y}"{pos=0.3}, from=2-5, to=4-5]
				\arrow["{m_Y}", from=1-6, to=3-6]
				\arrow["{K'}", dashed, from=1-4, to=1-6]
				\arrow["{{\Phi_\sigma}}","\sim"', to=1-4, from=2-3]
				\arrow[hook, from=2-1, to=2-3]
				\arrow["\mu"', from=2-1, to=4-1]
				\arrow[hook, from=4-1, to=4-3]
				\arrow[from=2-1, to=1-2]
				\arrow[hook, from=1-2, to=1-4]
				\arrow[from=1-2, to=3-2]
				\arrow[from=4-1, to=3-2]
				\arrow[hook, from=3-2, to=3-4]
			\end{tikzcd}
		\end{equation}
		The right and middle faces are Cartesian by their constructions. The dashed arrow $K'$ is defined to be the composition $\Psi_\gamma \circ K \circ \Phi_\sigma\inverse$, so the right cube commutes. Note that all arrows in the $\nearrow$ direction are isomorphisms. It follows that all the faces in the right cube involving $K'$ are also Cartesian. 
		The left cube is obtained by restricting the middle face of the diagram to $\lambda \mapsto \sigma^*\lambda$. It follows that the whole diagram commutes. 
		
		In particular, the Maffei's isomorphism
		\begin{equation}
			\Phi_\sigma: \M_\lambda^\theta(v,w) \to \M_{\sigma^*\lambda}^{\sigma^*\theta}(v,w) = \M_{\sigma^*\lambda}^{\theta}(v,w)
		\end{equation} 
		and the restriction of $\Psi_\gamma$ to $m_Y\inverse(\kappa(\lambda))$: 
		\begin{equation}
			\M_\lambda^\theta(v,w)  = m_Y\inverse(\kappa(\lambda)) \xrightarrow{\Psi_\gamma} m_Y\inverse(\gamma\kappa(\lambda)) = m_Y\inverse(\kappa(\sigma^*\lambda)) = \M_{\sigma^*\lambda}^\theta(v,w)
		\end{equation} coincide. 
		By the definitions of $\gamma*$ in \Cref{def: symp springer action} and $\sigma\star$ in \Cref{def: maffei action}, 
		we conclude that $\sigma\star = \gamma*$ as automorphisms of $H^*(\M_0^\theta(v,w),\C)$. This finishes the proof of (4) of \Cref{theorem:W vs NW}. 
	\end{proof}
	\begin{remarklabeled}
	 We have partially recovered \Cref{remark springer equals natural on H2}. Namely, $\gamma*$ coincides with the natural  $\gamma$-action on $H^2(\M_0^\theta(v,w),\C)$ from \Cref{remark natural action NW on namikawacartan}, when $\gamma$ takes the form $(1\times \iota)(\sigma)$ for some $\sigma\in W(v,w)$. 
	\end{remarklabeled}

	\printbibliography

\end{document}